\newcommand*{\@old@slash}{}\let\@old@slash\slash
\def\slash{\relax\ifmmode\delimiter"502F30E\mathopen{}\else\@old@slash\fi}
\titleformat{\section}{\normalsize\bfseries}{\thesection}{1em}{}
\titleformat{\subsection}{\normalsize\bfseries}{\thesubsection}{1em}{}
\numberwithin{equation}{subsection}
\theoremstyle{plain}
\newtheorem{PropSub}[subsection]{Proposition}
\newtheorem{LemSub}[subsection]{Lemma}
\newtheorem{CorSub}[subsection]{Corollary}
\newtheorem{ThmSub}[subsection]{Theorem}
\theoremstyle{definition}
\newtheorem{DefSub}[subsection]{Definition}
\newtheorem{ExaSub}[subsection]{Example}
\newtheorem{RemSub}[subsection]{Remark}
\newtheorem{ParSub}[subsection]{}
\newcommand*{\emptybox}{\leavevmode\hbox{}}
\DeclareMathAlphabet{\mathpzc}{OT1}{pzc}{m}{it}
\DeclareMathAlphabet{\mathcalligra}{T1}{calligra}{m}{n}
\newcommand{\bref}[1]{\textnormal{\ref{#1}}}
\newcommand{\pbref}[1]{\textnormal{(\ref{#1})}}
\newcommand{\A}{\ensuremath{\mathscr{A}}}
\newcommand{\B}{\ensuremath{\mathscr{B}}}
\newcommand{\C}{\ensuremath{\mathscr{C}}}
\newcommand{\F}{\ensuremath{\mathscr{F}}}
\newcommand{\normalJ}{\ensuremath{\mathscr{J}}}
\newcommand{\J}{\ensuremath{{\kern -0.4ex \mathscr{J}}}}
\newcommand{\K}{\ensuremath{\mathscr{K}}}
\newcommand{\sL}{\ensuremath{\mathscr{L}}}
\newcommand{\M}{\ensuremath{\mathscr{M}}}
\newcommand{\R}{\ensuremath{\mathscr{R}}}
\newcommand{\sS}{\ensuremath{\mathscr{S}}}
\newcommand{\T}{\ensuremath{\mathscr{T}}}
\newcommand{\U}{\ensuremath{\mathscr{U}}}
\newcommand{\V}{\ensuremath{\mathscr{V}}}
\newcommand{\W}{\ensuremath{\mathscr{W}}}
\newcommand{\X}{\ensuremath{\mathscr{X}}}
\newcommand{\uV}{\ensuremath{\mkern2mu\underline{\mkern-2mu\mathscr{V}\mkern-6mu}\mkern5mu}}
\newcommand{\eCAT}[1]{\ensuremath{#1\textnormal{-\text{CAT}}}}
\newcommand{\VCAT}{\ensuremath{\V\textnormal{-\text{CAT}}}}
\newcommand{\VProf}{\ensuremath{\V\textnormal{-\text{Prof}}}}
\newcommand{\eProf}[1]{\ensuremath{#1\textnormal{-\text{Prof}}}}
\newcommand{\VPROF}{\ensuremath{\V\textnormal{-\text{PROF}}}}
\newcommand{\Cocts}[1]{\ensuremath{#1\textnormal{-\text{Cocts}}}}
\newcommand{\NN}{\ensuremath{\mathbb{N}}}
\newcommand{\TT}{\ensuremath{\mathbb{T}}}
\newcommand{\ZZ}{\ensuremath{\mathbb{Z}}}
\newcommand{\ob}{\ensuremath{\operatorname{\textnormal{\textsf{ob}}}}}
\newcommand{\Lan}{\ensuremath{\operatorname{\textnormal{\textsf{Lan}}}}}
\newcommand{\ca}[1]{\scalebox{0.85}{\raisebox{0.3ex}{$|$}} #1\scalebox{0.85}{\raisebox{0.3ex}{$|$}}}
\newcommand{\Th}{\ensuremath{\textnormal{Th}}}
\newcommand{\ThJ}{\ensuremath{\Th_{\kern -0.5ex \normalJ}}}
\newcommand{\SubThJ}{\ensuremath{\textnormal{SubTh}_{\kern -0.5ex \normalJ}}}
\newcommand{\Vfp}{\ensuremath{\V_{\kern -0.5ex fp}}}
\newcommand{\Ev}{\ensuremath{\textnormal{\textsf{Ev}}}}
\newcommand{\Coev}{\ensuremath{\textnormal{\textsf{Coev}}}}
\newcommand{\y}{\ensuremath{\textnormal{\textsf{y}}}}
\newcommand{\YJ}{\ensuremath{\textnormal{\textsf{y}}_j}}
\newcommand{\inJ}[1]{#1 \in \normalJ\kern -1.2ex}
\newcommand{\JinJ}{\inJ{J}}
\newcommand{\VCATJ}{\VCAT_{\kern -0.7ex \normalJ}}
\newcommand{\PhiJ}{\Phi_{\kern -0.8ex \normalJ}}
\newcommand{\MndJ}{\Mnd_{\kern -0.8ex \normalJ}}
\newcommand{\Set}{\ensuremath{\operatorname{\textnormal{\text{Set}}}}}
\newcommand{\FinCard}{\ensuremath{\operatorname{\textnormal{\text{FinCard}}}}}
\newcommand{\FinSet}{\ensuremath{\operatorname{\textnormal{\text{FinSet}}}}}
\newcommand{\Ab}{\ensuremath{\operatorname{\textnormal{\text{Ab}}}}}
\newcommand{\Mod}[1]{\ensuremath{#1\textnormal{-\text{Mod}}}}
\newcommand{\Bimod}{\ensuremath{\textnormal{\text{Bimod}}}}
\newcommand{\Mnd}{\ensuremath{\operatorname{\textnormal{\text{Mnd}}}}}
\newcommand{\Mon}{\ensuremath{\operatorname{\textnormal{\text{Mon}}}}}
\newcommand{\Alg}[1]{\ensuremath{#1\kern -.5ex\operatorname{\textnormal{-\text{Alg}}}}}
\newcommand{\CRProf}{\ensuremath{\operatorname{\textnormal{\text{CRProf}}}}}
\newcommand{\CRProfJ}{\ensuremath{\CRProf_{\kern -0.6ex \normalJ}}}
\newcommand{\Adjn}[6]{\xymatrix {#1 \ar@/_0.5pc/[rr]_{#2}^(0.4){#4}^(0.6){#5}^{\top} & & #6 \ar@/_0.5pc/[ll]_{#3}}}
\newcommand{\Equiv}[6]{\xymatrix {#1 \ar@/_0.5pc/[rr]_{#2}^(0.4){#4}^(0.6){#5}^{\sim} & & #6 \ar@/_0.5pc/[ll]_{#3}}}
\def\modto{\mathop{\rlap{\hspace{1.1ex}$\circ$}{\longrightarrow}}\nolimits}
\newcommand{\op}{\ensuremath{\textnormal{op}}}
\newcommand{\pushoutcorner}{\ar@{}[dr]|(.3)\ulcorner}
\newcommand{\pullbackcorner}{\ar@{}[dr]|(.3)\lrcorner}
\newcommand{\cmt}[1]{}
\begin{document}

\author{\normalsize  Rory B. B. Lucyshyn-Wright\thanks{The author gratefully acknowledges financial support in the form of an AARMS Postdoctoral Fellowship, a Mount Allison University  Research Stipend, and, earlier, an NSERC Postdoctoral Fellowship.}\let\thefootnote\relax\footnote{Keywords: algebraic theory; Lawvere theory; universal algebra; monad; enriched category theory; free cocompletion}\footnote{2010 Mathematics Subject Classification: 18C10, 18C15, 18C20, 18C05, 18D20, 18D15, 08A99, 08B99, 08B20, 08C05, 08C99, 03C05, 18D35, 18D10, 18D25, 18A35}
\\
\small Mount Allison University, Sackville, New Brunswick, Canada}

\title{\large \textbf{Enriched algebraic theories and monads for a system of arities}}

\date{}

\maketitle

\abstract{
Under a minimum of assumptions, we develop in generality the basic theory of universal algebra in a symmetric monoidal closed category $\V$ with respect to a specified system of arities $j:\J \hookrightarrow \V$.  Lawvere's notion of \textit{algebraic theory} generalizes to this context, resulting in the notion of \textit{single-sorted $\V$-enriched $\J$-cotensor theory}, or $\J$\nolinebreak\mbox{-}\nolinebreak\textit{theory} for short.  For suitable choices of $\V$ and $\J$, such $\J$-theories include the enriched algebraic theories of Borceux and Day, the enriched Lawvere theories of Power, the equational theories of Linton's 1965 work, and the $\V$-theories of Dubuc, which are recovered by taking $\J = \V$ and correspond to arbitrary $\V$-monads on $\V$.  We identify a modest condition on $j$ that entails that the $\V$\nolinebreak\mbox{-}\nolinebreak category of $\T$-algebras exists and is monadic over $\V$ for every $\J$-theory $\T$, even when $\T$ is not small and $\V$ is neither complete nor cocomplete.  We show that $j$ satisfies this condition if and only if $j$ presents $\V$ as a free cocompletion of $\J$ with respect to the weights for left Kan extensions along $j$, and so we call such systems of arities \textit{eleutheric}.  We show that $\J$-theories for an eleutheric system may be equivalently described as (i) monads in a certain one-object bicategory of profunctors on $\J$, and (ii) $\V$-monads on $\V$ satisfying a certain condition.  We prove a characterization theorem for the categories of algebras of $\J$-theories, considered as $\V$-categories $\A$ equipped with a specified $\V$-functor $\A \rightarrow \V$.
}

\section{Introduction} \label{sec:intro}

In the 1930s, Birkhoff laid the foundations of the subject of \textit{universal algebra} \cite[II]{Bir:SelPa}, which provides general methods for the study of algebraic objects described by operations and equations, such as groups, rings, lattices, Boolean algebras, and so on.  In 1963, Lawvere \cite{Law:PhD} provided an elegant formulation of universal algebra through category theory, wherein an \textit{algebraic theory} is\footnote{This way of stating the definition appears in \cite{Law:ProbsAlgTh} and \cite[Vol. 2]{Bor}, for example.} simply a category $\T$ having a denumerable set of objects $S^0,S^1,S^2,...$ such that $S^n$ is an $n$-th power of an object $S = S^1$ of $\T$ called the \textit{sort}.  A \textit{$\T$-algebra} is then defined as a functor $A:\T \rightarrow \Set$ that preserves finite powers\footnote{Typically one demands preservation of finite \textit{products}, but this amounts to the same.}.  We construe $\ca{A} = A(S)$ as the \textit{underlying set} or \textit{carrier} of $A$, and therefore $A(S^n)$ is simply an $n$-th power $\ca{A}^n$ of the carrier set.  Morphisms $\omega:S^n \rightarrow S$ in $\T$ may be called \textit{abstract $n$-ary operations}, and the associated mappings $A(\omega):\ca{A}^n \rightarrow \ca{A}$ are called \textit{(concrete) $n$-ary operations}.  We call $n$ the \textit{arity} of the operation.  Linton \cite{Lin:Eq} varied this formulation to allow infinite arities, so that the role of the finite cardinals $n$ in the above is played instead by arbitrary sets $J$ and the objects of a theory $\T$ are $J$-th powers $S^J$ of a single object $S$.  Linton found that those equational theories $\T$ that are locally small correspond to \textit{monads} $\TT = (T,\eta,\mu)$ on $\Set$, the key idea being that $TJ \cong \T(S^J,S)$.  Lawvere's finitary algebraic theories then correspond to those monads $\TT$ on $\Set$ that are \textit{finitary}, meaning that $T$ preserves filtered colimits.

Several variations and generalizations on Lawvere's notion of algebraic theory begin with the idea of replacing the category of sets with a given symmetric monoidal closed category $\V$ and taking $\T$ to be a $\V$-enriched category rather than an ordinary category.  In the formulation of Borceux and Day \cite{BoDay}, an algebraic theory $\T$ is a $\V$-category whose objects are (conical) finite powers $S^n$ of a single object $S = S^1$, so that one has for each finite cardinal $n$ an \textit{object of abstract $n$-ary operations} $\T(S^n,S)$ in $\V$.  A $\T$-algebra is then a finite-power-preserving $\V$-functor $A:\T \rightarrow \V$, so that the passage from abstract to concrete operations is implemented by a family of morphisms in $\V$.  More drastically, one can also take the arities themselves to be objects $J$ of $\V$.  For example, building on work of Kelly \cite{Ke:FL}, Power \cite{Pow:EnrLaw} takes the arities $J$ to be the \textit{finitely presentable} objects of $\V$, assuming that $\V$ is locally finitely presentable as a closed category.  Power's notion of \textit{enriched Lawvere theory} is therefore a $\V$-category $\T$ whose objects $S^J$ are $J$-th \textit{cotensors}\footnote{In the body of the paper, we write cotensors $C^J$ in a $\V$-category $\C$ instead as $[J,C]$.} of a single object $S = S^I$, where $I$ is the unit object of $\V$, the notion of cotensor here providing the appropriate concept of `$\V$-enriched $J$-th power'.  Consequently a $\T$-algebra $A:\T \rightarrow \V$ is defined as a $\V$-functor that preserves $J$-cotensors for finitely presentable objects $J$ of $\V$, so that $A(S^J) \cong \uV(J,\ca{A})$ is the internal hom from $J$ to $\ca{A}$ in $\V$.

More basically, one can also form an analogue of Linton's notion of equational theory for an arbitrary symmetric monoidal closed category $\V$ by taking as arities \textit{arbitrary} objects $J$ of $\V$.  Such enriched theories with arbitrary arities were introduced by Dubuc \cite{Dub:EnrStrSem} in 1970, under the name \textit{$\V$-theories},  and they are equivalently described as arbitrary $\V$-monads $\TT$ on $\V$ \pbref{thm:vth_vmnd}, as Dubuc showed under the blanket assumption that $\V$ is complete and well-powered.

In the present paper, we put forth a very simple and general notion of enriched algebraic theory that encompasses all of the above examples and more, and we develop the basic theory of such enriched algebraic theories under a minimum of assumptions.  In particular we do not assume that $\V$ is complete or cocomplete.  We begin with a given \textit{system of arities}, by which we understand a fully faithful strong symmetric monoidal $\V$-functor $j:\J \rightarrowtail \V$.  Without loss of generality, we can take $j:\J \hookrightarrow \V$ to be the inclusion of a full subcategory containing $I$ and closed under $\otimes$, and we construe the objects $J$ of $\J$ as allowable arities.  A $\J$-\textit{theory} is then a $\V$-category $\T$ whose objects are cotensors $S^J$ of an object $S = S^I$ with $J \in \J$, and in fact we require (without loss of generality) that $\ob\T = \ob\J$.  $\T$-algebras are then defined as $\J$-cotensor-preserving $\V$-functors $A:\T \rightarrow \V$.  Our $\J$-theories may be identified quite unambiguously among various possible notions of algebraic theory as the \textit{single-sorted $\V$-enriched $\J$-cotensor theories}.  Theories of the Borceux-Day type, which we may call \textit{single-sorted enriched finite power theories}, are recovered by considering as arities the finite copowers $n \cdot I$ of the unit object $I$.  As a more unusual example, if we instead take $\J$ to be the one-object full subcategory $\{I\} \hookrightarrow \V$ with the trivial symmetric monoidal structure on $\{I\}$, then $\{I\}$-theories $\T$ are the same as monoids $R$ in $\V$, and $\T$-algebras are the same as left $R$-modules in $\V$.

On this basis, we investigate modest conditions on a $\J$-theory $\T$ that entail the existence of the $\V$-category $\Alg{\T}$ of $\T$-algebras and its monadicity over $\V$ \pbref{thm:existence_and_vmonadicity_of_talg}, even when $\T$ is not small and $\V$ is neither complete nor cocomplete.  We study a correspondingly modest condition on $j:\J \hookrightarrow \V$ that entails these conclusions for every $\J$-theory $\T$, calling a system of arities \textit{eleutheric} if it satisfies this condition \pbref{def:eleutheric}.  The choice of name was prompted by our theorem \bref{thm:eleutheric} to the effect that a system of arities $j:\J \hookrightarrow \V$ is eleutheric if and only if $j$ presents $\V$ as a free $\Phi$-cocompletion of $\J$ for the class $\Phi$ of weights for left Kan extensions along $j$, whereas in Greek, \textit{eleutheros} ($\varepsilon\lambda\varepsilon\acute{\upsilon}\theta\varepsilon\rho o\varsigma$) means \textit{free}.

We show that for an eleutheric system of arities, $\J$-theories are the same as monads in a one-object bicategory whose 1-cells are certain $\V$-profunctors on $\J$ \pbref{thm:th_as_mnd_crprofj}.  We then show that this bicategory is equivalent to a one-object 2-category consisting of certain endo-$\V$-functors on $\V$ \pbref{thm:crprofj_bieq_jary_end}.  On this basis, we establish an equivalence between $\J$-theories and $\J$-\textit{ary $\V$-monads} on $\V$ \pbref{thm:jth_jary_mnd}, which we define as $\V$-monads that preserve left Kan extensions along $j$.  In this setting, we then prove a characterization theorem for $\J$-\textit{algebraic} $\V$-categories over $\V$ \pbref{thm:charn_jalg_cats_over_v}, i.e. we characterize among arbitrary $\V$-functors $\A \rightarrow \V$ those equivalent to the forgetful $\V$-functor $\Alg{\T} \rightarrow \V$ for some $\J$-theory $\T$.

Our work should be compared to Lack and Rosick\'y's succinct development of \textit{Lawvere $\Phi$-theories} \cite[\S 7]{LaRo} for a class of weights $\Phi$, which can be understood as follows.  Prior to Power's work on enriched Lawvere theories, Kelly had studied $\V$-enriched \textit{finite limit theories} \cite{Ke:FL} for $\V$ locally finitely presentable as a closed category, an instance of Kelly's more general $\Phi$-limit theories for a class of weights $\Phi$ \cite[Ch. 6]{Ke:Ba}.  Lack and Rosick\'y masterfully demonstrate that the relation between Power's enriched Lawvere theories and Kelly's finite limit theories generalizes to the setting of a given class of weights $\Phi$ satisfying certain axioms, with $\V$ complete and cocomplete.  In particular, they obtain a notion of \textit{Lawvere $\Phi$-theory} in which the role of the arities $J$ is played by the \textit{$\Phi$-presentable} objects of a locally $\Phi$-presentable $\V$-category $\K$.

The Lawvere $\Phi$-theories of Lack and Rosick\'y provide a certain generality yet do not capture all of the above examples, and the brief treatment of them in \cite[\S 7]{LaRo} proceeds under stronger assumptions than we make here, including their Axiom A as well as the completeness and cocompleteness of $\V$ and the assumption that $\Phi$ is a \textit{locally small} class of weights.  Also, assertions made in \cite[\S 7]{LaRo} entail immediately that their Lawvere $\Phi$-theories are necessarily (essentially) small\footnote{However, a proof to this effect is not given therein, and it does not seem obvious why this should be the case, despite the assumption of local smallness of $\Phi$ therein.}, and this property is used therein.  Hence the Lack-Rosick\'y framework of Lawvere $\Phi$-theories does not include\footnote{It would be natural to employ here a class of weights $\Phi$ containing the weights for all cotensors, but such a class $\Phi$ would not in general be locally small.} the important example of $\J$-theories with $\J = \V$, equivalently, $\V$-monads on $\V$.  Also, it is not clear that the enriched theories of Borceux and Day can be captured as Lawvere $\Phi$-theories in the given sense, despite the fact that Lack and Rosick\'y discuss the Borceux-Day work and the specific class $\Phi$ of weights for conical finite products \cite[\S 5.2]{LaRo}.  Indeed, the finite copowers $n \cdot I$ that serve as the arities for the Borceux-Day theories are not in general the same as the $\Phi$-presentable objects for this class (\S \bref{sec:appendix}).

Hence, encouraged by the simplicity and generality of the notion of $\J$-theory, we felt a need to develop in generality the fundamentals of enriched universal algebra with respect to a given system of arities $\J \hookrightarrow \V$.  The most pressing practical reason for such a development is the need for a common framework for work on enriched universal-algebraic notions that may depend on a choice of arities, so that such work is then applicable at once to arbitrary $\V$-monads on $\V$, the enriched Lawvere theories of Power, the theories of Borceux and Day, and so on.  In particular, the author's forthcoming study of \textit{commutation} and \textit{commutants} for enriched algebraic theories has necessitated such a framework and, in its turn, forms the basis for a theory of \textit{measure and distribution monads} presented in a recent conference talk \cite{Lu:CT2015}.

In the context of ordinary (unenriched) category theory, algebraic theories with respect to a given subcategory of arities $j:\J \hookrightarrow \C$ in an ordinary category $\C$ (or even just a functor $j$) were considered as early as Linton's work in the proceedings of the Z\"urich seminar of 1966/67 \cite{Lin:OutlFuncSem}.  Further related work in unenriched category theory includes \cite{BMW} on monads and theories with arities and \cite{ACU} on $J$-relative monads.  Also, our result connecting $\J$-theories and free $\Phi$-cocompletion \pbref{thm:eleutheric} bears an interesting relation to the ideas of \cite{Hy:ThAlgTh} on a general framework for universal algebraic notions in terms of Kleisli bicategories.  Indeed, by passing to an enlargement $\V'$ of $\V$ \pbref{par:enl} one can (by \cite[3.6]{KeSch}) form the free $\Phi$-cocompletion pseudomonad on $\eCAT{\V'}$ for the class of weights $\Phi$ for left Kan extensions along a given eleutheric system $j:\J \hookrightarrow \V$.  From this perspective, Theorems \bref{thm:eleutheric} and \bref{thm:jth_jary_mnd} below provide ample reason to expect that $\J$-theories may be equivalently described as monads on $\J$ in the associated Kleisli bicategory, which in turn provide a suitable notion of $j$-relative $\V$-monad on $\V$.

\section{Background and notation}\label{sec:background}

\begin{ParSub}\label{par:enl}
Throughout, we fix a symmetric monoidal closed category $\V$, and we employ the theory of $\V$-enriched categories (\cite{EiKe,Dub,Ke:Ba}).  We will distinguish terminologically between \textit{$\V$-categories} and (ordinary) \textit{categories}, and similarly for functors, etcetera, but the concepts of limit, Kan extension, density and so on shall be interpreted in the enriched sense when applied to enriched data.  We denote by $\uV$ the $\V$-category canonically associated to $\V$, so that the underlying ordinary category of $\uV$ may be identified with $\V$ itself.  We do not assume the existence of any limits or colimits in $\V$.  Hence we cannot in general form the $\V$-functor $\V$-category $[\A,\B]$ for $\V$-categories $\A$ and $\B$, even when $\A$ is small.  We will however make some use of the method of passing to larger universe $\U$ with respect to which certain given $\V$-categories such as $\A$ are small (i.e., $\U$-small) and embedding $\V$ into a ($\U$-)complete and ($\U$-)cocomplete symmetric monoidal closed category $\V'$, per \cite[\S 3.11, 3.12]{Ke:Ba}, such that the embedding $\V \hookrightarrow \V'$ is strong symmetric monoidal and preserves all limits that exist in $\V$ and all $\U$-small colimits that exist in $\V$.  We shall call any such $\V'$ an \textit{enlargement} of $\V$.  As is common practice, e.g. in \cite{Ke:Ba}, we write as if the given embedding of $\V$ into $\V'$ is a strict monoidal inclusion.
\end{ParSub}

\begin{ParSub}\label{par:weighted_limits}
We shall make extensive use the notion of $\V$-enriched \textbf{weighted (co)limit}, called \textit{indexed limit} in \cite{Ke:Ba}.  A \textbf{weight} on a $\V$-category $\B$ is simply a $\V$-functor $W:\B^\op \rightarrow \uV$.  We call a pair $(W,D)$ consisting of $\V$-functors $W:\B^\op \rightarrow \uV$ and $D:\B \rightarrow \C$ a \textbf{weighted diagram} of shape $\B$ in the $\V$-category $\C$.  A \textit{(weighted) colimit} of $(W,D)$ is an object $W \star D$ of $\C$ equipped with a family of morphisms $\pi_B^C:\C(W \star D,C) \rightarrow \uV(WB,\C(DB,C))$ $(B \in \ob\B, C \in \ob\C)$ that exhibit $\C(W \star D,C)$ as an object of $\V$-natural transformations $[\B^\op,\uV](W,\C(D-,C))$ for each fixed object $C$ of $\C$.  By definition, a \textbf{cylinder} $(C,\gamma)$ on $(W,D)$ consists of an object $C$ of $\C$ together with a $\V$-natural transformation $\gamma:W \Rightarrow \C(D-,C)$.  A weighted colimit can be defined equivalently as a cylinder $(W \star D,\lambda)$ on $(W,D)$ satisfying a certain condition \cite[\S 3.1]{Ke:Ba}.  Dual to the notion of weighted colimit is the notion of \textit{(weighted) limit} $\{U,E\}$ where $U:\B \rightarrow \uV$ and $E:\B \rightarrow \C$ (\cite[\S 3.1]{Ke:Ba}).  In particular, we have the notion of \textbf{cotensor} $[V,C]$ of an object $C$ of $\C$ by an object $V$ of $\V$ \cite[\S 3.7]{Ke:Ba}.
\end{ParSub}

\begin{ParSub}\label{par:det_cond_pres_refl_colims}
Given a class of (possibly large) weighted diagrams $\Phi$ and a $\V$-category $\A$, let us write $\Phi_\A$ for the subclass of $\Phi$ consisting of weighted diagrams in $\A$.  Colimits $W \star D$ with $(W,D) \in \Phi$ are called \textit{$\Phi$-colimits}.  We say that $\A$ is $\Phi$-\textbf{cocomplete} if it has all $\Phi$-colimits.  A $\V$-functor $G:\A \rightarrow \C$ is said to be $\Phi$-\textbf{cocontinuous} if it preserves $\Phi$-colimits.  We say that $G$ \textbf{detects} $\Phi$-colimits if for any $(W,D) \in \Phi_\A$, if $W \star (GD)$ exists then $W \star D$ exists.  $G$ \textbf{conditionally preserves} $\Phi$-colimits if for each $(W,D) \in \Phi_\A$ with a colimit $W \star D$, if $W \star (GD)$ exists then $G$ preserves the colimit $W \star D$.  $G$ \textbf{reflects} $\Phi$-colimits if for any cylinder $(A,\gamma)$ on $(W,D) \in \Phi_\A$, if the associated cylinder $(GA,G\gamma)$ on $(W,GD)$ is a colimit cylinder, then $(A,\gamma)$ is a colimit cylinder on $(W,D)$.  $G$ \textbf{creates} $\Phi$-colimits if for any $(W,D) \in \Phi_\A$ and any colimit cylinder $(L,\lambda)$ on $(W,GD)$, the following conditions hold:  (i) There is a unique cylinder $(A,\gamma)$ on $(W,D)$ with $(GA,G\gamma) = (L,\lambda)$, and (ii) $(A,\gamma)$ is a colimit cylinder on $(W,D)$.  Note that if $G$ creates $\Phi$-colimits, then $G$ detects, reflects, and conditionally preserves $\Phi$-colimits.  Because of this, we will find that in the study of enriched algebraic theories in the absence of cocompleteness assumptions, conditional preservation plays a more central role than preservation.  All the above terminology can be applied also in the case that $\Phi$ is instead a class of weights, since $\Phi$ determines an associated class of weighted diagrams, namely all those with weights in $\Phi$.
\end{ParSub}

\begin{ParSub}\label{par:vmonads_and_vmonadicity}
Given a monoidal category $\M$, we denote by $\Mon(\M)$ the category of monoids in $\M$.  In particular, given an object $\C$ of a bicategory $\W$, the monoids in the monoidal category $\W(\C,\C)$ are called \textbf{monads on $\C$ in $\W$} and form a category $\Mnd_\W(\C) = \Mon(\W(\C,\C))$.  When $\W$ is the 2-category $\VCAT$ of $\V$-categories, so that $\C$ is a $\V$-category, monads on $\C$ in $\W$ are called $\V$-\textbf{monads}.  As soon as $\V$ has equalizers, we can form the $\V$-category $\C^\TT$ of $\TT$-algebras for each $\V$-monad $\TT$ on $\C$ \cite[Ch. II]{Dub}.  A $\V$-functor $G:\A \rightarrow \C$ is said to be $\V$-\textbf{monadic} (resp. \textbf{strictly} $\V$-\textbf{monadic}) if $G$ has a left adjoint $F$ and the comparison $\V$-functor $\A \rightarrow \C^\TT$ of \cite[Ch. II]{Dub} is an equivalence (resp. isomorphism), where $\TT$ is the $\V$-monad induced by $F \dashv G$.  By the Beck monadicity theorem, formulated in the enriched context by Dubuc in \cite[II.2.1]{Dub}, $G$ is strictly $\V$-monadic if and only if $G$ has a left adjoint and creates conical coequalizers of $G$-contractible pairs, and $G$ is $\V$-monadic if and only if $G$ has a left adjoint and detects, reflects, and conditionally preserves conical coequalizers of $G$-contractible pairs\footnote{Here by a $G$-\textit{contractible pair} we mean a parallel pair $(f,g)$ in $\A$ such that $(Gf,Gg)$ is a contractible pair in the sense of \cite{BaWe}, whereas Dubuc requires also the existence of a coequalizer for $(Gf,Gg)$ and so can write ``preserves'' instead of ``conditionally preserves''.}.  
\end{ParSub}

\begin{ParSub}\label{par:pseudo_slice}
Given an object $K$ of a 2-category $\K$, one can define a 2-category $\K \slash\: K$ called the \textbf{pseudo-slice} 2-category over $K$ in $\K$, as follows.  The objects of $\K \slash\: K$ are 1-cells $p:L \rightarrow K$ in $\K$, written as pairs $(L,p)$, and the 1-cells $(f,\alpha):(L,p) \rightarrow (M,q)$ in $\K \slash\: K$ consist of a 1-cell $f:L \rightarrow M$ in $\K$ and an invertible 2-cell $\alpha:p \Rightarrow qf$ in $\K$.  A 2-cell $\gamma:(f,\alpha) \Rightarrow (g,\beta):(L,p) \rightarrow (M,q)$ is a 2-cell $\gamma:f \Rightarrow g$ in $\K$ such that $q\gamma \cdot \alpha = \beta$.  It is straightforward to show that a 1-cell in $\K \slash\: K$ is an equivalence (resp. isomorphism) as soon as its underlying 1-cell in $\K$ is an equivalence (resp. isomorphism).  In particular, when $\V$ has equalizers, the comparison $\V$-functor \pbref{par:vmonads_and_vmonadicity} associated to a $\V$-monadic $\V$-adjunction $F \dashv G:\A \rightarrow \C$ commutes with the right adjoints \cite[II.1.6]{Dub} and hence determines an equivalence in $\VCAT \slash\: \C$.
\end{ParSub}

\section{Systems of arities}\label{sec:arities}

\begin{DefSub}\label{def:sys_ar}
A \textbf{system of arities} in $\V$ is a fully faithful strong symmetric monoidal $\V$-functor $j:\J \rightarrowtail \uV$.
\end{DefSub}

\begin{RemSub}
In particular, the domain $\J$ of a system of arities is therefore a \textit{symmetric monoidal $\V$-category}, i.e. a $\V$-category $\J$ equipped with an object $I \in \ob\J$, a $\V$-functor $\otimes:\J \otimes \J \rightarrow \J$, and isomorphisms $\ell_J:I \otimes J \rightarrow J$, $r_J:J \otimes I \rightarrow J$, $a_{JKL}:(J \otimes K) \otimes L \rightarrow J \otimes (K \otimes L)$, $s_{JK}:J \otimes K \rightarrow K \otimes J$ that are $\V$-natural in $J,K,L \in \J$ and satisfy the familiar axioms for a symmetric monoidal category.
\end{RemSub}

\begin{ExaSub}[\textbf{Finite cardinals}]\label{exa:fin_card}
Take $\V = \Set$, and let $\J = \FinCard \hookrightarrow \V$ be the full subcategory consisting of the finite cardinals.  Then $\J$ has finite products, given by the usual product of cardinals and the terminal object $1$.  Moreover, the inclusion $\J \hookrightarrow \V$ preserves finite products and so is a strong symmetric monoidal functor. 
\end{ExaSub}

\begin{ExaSub}[\textbf{Finitely presentable objects}]\label{exa:lfp}
Letting $\V$ be \textit{locally finitely presentable as a closed category} \cite{Ke:FL,Pow:EnrLaw}, we can take $\J \hookrightarrow \uV$ to be the full sub-$\V$-category $\Vfp$ consisting of the finitely presentable objects.
\end{ExaSub}

\begin{ExaSub}[\textbf{Unrestricted arities}]\label{exa:unr_ar}
The identity $\V$-functor $\uV \rightarrow \uV$ is a system of arities.
\end{ExaSub}

\begin{ExaSub}[\textbf{Just the unit object $I$}]\label{exa:just_unit}
The one-object full sub-$\V$-category $\{I\} \hookrightarrow \uV$ is a system of arities.  Indeed, observe that $\{I\}$ is isomorphic to the \textit{unit $\V$-category} $\text{I}$, i.e. the $\V$-category with a single object $*$ and $\text{I}(*,*) = I$, with the evident composition and identity arrow.  $\V$-functors $\text{I} \rightarrow \uV$ just correspond to objects in $\V$.  Furthermore, $\text{I}$ carries the structure of a symmetric strict monoidal $\V$-category, trivially, and symmetric monoidal $\V$-functors $\text{I} \rightarrow \uV$ correspond to commutative monoids in $\V$.  Since the unit object $I$ carries the structure of a commutative monoid in $\V$, we obtain a corresponding symmetric monoidal $\V$-functor $j:\text{I} \rightarrow \uV$, which is moreover a strong monoidal $\V$-functor since the monoid structure on $I$ consists of isomorphisms.  Note that the composite $\text{I} \cong \{I\} \hookrightarrow \uV$ equals $j$, so $j$ is fully faithful.  Hence $j$ is a system of arities, and the inclusion $\{I\} \hookrightarrow \uV$ therefore also carries the structure of a system of arities.
\end{ExaSub}

\begin{ExaSub}[\textbf{Finite copowers of the unit object}]\label{exa:fin_copowers_of_I}
Assuming that $\V$ has finite copowers $n\cdot I$ $(n \in \NN)$ of the unit object $I$, the mapping $\NN \rightarrow \ob\V$ sending $n$ to $n \cdot I$ determines an identity-on-homs $\V$-functor
$$j:\NN_{\V} \rightarrowtail \uV$$
where $\NN_{\V}$ is a $\V$-category with $\ob\NN_{\V} = \NN$ and with $\NN_{\V}(n,m) = \uV(n \cdot I,m \cdot I)$.  For all $m, n \in \NN$, the evident canonical morphism $\phi_{mn}:(m \times n)\cdot I \rightarrow (m\cdot I) \otimes (n\cdot I)$ in $\V$ is an isomorphism since the monoidal product $\otimes$ preserves copowers in each variable.  Further, the canonical morphism $\psi:1 \cdot I \rightarrow I$ is also an isomorphism.  Moreover, there is a unique structure of strict symmetric monoidal $\V$-category on $\NN_{\V}$ with unit object $1$ and monoidal product $\times$ such that $j:\NN_{\V} \rightarrow \uV$ is a strong symmetric monoidal $\V$-functor when equipped with $\phi^{-1}$ and $\psi^{-1}$.  The $\V$-category $\NN_{\V}$ is clearly equivalent to the full sub-$\V$-category of $\uV$ consisting of the objects of the form $n \cdot I$ $(n \in \NN)$, employed by Borceux and Day in \cite{BoDay} and there written as $\V_f$.
\end{ExaSub}

\begin{PropSub}\label{thm:sys_ar_equ_subcat}
Any full sub-$\V$-category $\J \hookrightarrow \uV$ containing $I$ and closed under $\otimes$ is a system of arities.  Moreover, assuming the axiom of choice for classes / possibly-large sets, any system of arities $j:\J \rightarrowtail \uV$ is equivalent to a system $j':\J' \hookrightarrow \uV$ of the latter form, in the sense that there is an equivalence of symmetric monoidal $\V$-categories $\J \simeq \J'$ that commutes with $j$ and $j'$.
\end{PropSub}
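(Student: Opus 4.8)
The plan is to treat the two assertions in turn. For the first, let $\J \hookrightarrow \uV$ be a full sub-$\V$-category containing $I$ and closed under $\otimes$, and write $i : \J \hookrightarrow \uV$ for the inclusion; $i$ is fully faithful by the definition of full sub-$\V$-category, so the task is to put on $\J$ a symmetric monoidal $\V$-category structure making $i$ a strong — in fact strict — symmetric monoidal $\V$-functor. Since $\J$ is full in $\uV$, so is $\J \otimes \J$ in $\uV \otimes \uV$, and since $\J$ is closed under $\otimes$ the $\V$-functor $\otimes : \uV \otimes \uV \rightarrow \uV$ corestricts along this inclusion to a $\V$-functor $\otimes : \J \otimes \J \rightarrow \J$. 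The components at objects of $\J$ of the structural isomorphisms $a,\ell,r,s$ of $\uV$ are morphisms of $\uV$ between objects of $\J$; fullness makes them morphisms of $\J$, invertible there, and they remain $\V$-natural in $\J$. The pentagon, triangle and hexagon identities hold in $\J$ because they hold in $\uV$ and $i$ is faithful. Hence $\J$ is a symmetric monoidal $\V$-category and $i$, with identity comparison data, is a strict symmetric monoidal $\V$-functor, so $i$ is a system of arities.

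For the second assertion, fix an arbitrary system of arities $j : \J \rightarrowtail \uV$. I would take $\J'$ to be the \emph{replete essential image} of $j$, i.e.\ the full sub-$\V$-category of $\uV$ on the object class $\{X \in \ob\uV : X \cong jA \text{ in } \uV \text{ for some } A \in \ob\J\}$. This class contains $I$, since the unit comparison of the strong monoidal $\V$-functor $j$ is an isomorphism $I \xrightarrow{\sim} jI$, and it is closed under $\otimes$, since from $X \cong jA$ and $Y \cong jB$ the tensor comparison of $j$ yields $X \otimes Y \cong jA \otimes jB \cong j(A \otimes B)$. By the first assertion, $\J'$ is a system of arities; let $j' : \J' \hookrightarrow \uV$ be its inclusion, carrying the strict symmetric monoidal structure described above.

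Next I would note that $j$ factors through $\J'$ on the nose — $jA \in \ob\J'$ by construction — so corestriction gives a $\V$-functor $j_0 : \J \rightarrow \J'$ with $j' j_0 = j$. Fullness of $\J'$ gives $\J'(j_0 A, j_0 B) = \uV(jA, jB)$, so $j_0$ is fully faithful because $j$ is; and every object of $\J'$ is by definition isomorphic in $\uV$, hence (fullness again) in $\J'$, to some $j_0 A$, so $j_0$ is essentially surjective. Invoking the axiom of choice for classes, a fully faithful and essentially surjective $\V$-functor between possibly-large $\V$-categories is an equivalence, so $j_0 : \J \simeq \J'$. Finally $j_0$ is strong symmetric monoidal: as $j'$ is strict, the monoidal data of $\J'$ is merely restricted from $\uV$, and the comparison isomorphisms $I \xrightarrow{\sim} jI = j_0 I$ and $j_0 A \otimes j_0 B = jA \otimes jB \xrightarrow{\sim} j(A \otimes B) = j_0(A \otimes B)$ of $j$ serve verbatim for $j_0$ and satisfy the required coherence because they do so for $j$; moreover $j' j_0 = j$ then holds as an equation of symmetric monoidal $\V$-functors. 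Since a strong symmetric monoidal $\V$-functor that is an equivalence of underlying $\V$-categories is an equivalence of symmetric monoidal $\V$-categories (a strong symmetric monoidal quasi-inverse being produced, once more, by choice), $j_0$ is the desired equivalence $\J \simeq \J'$ commuting with $j$ and $j'$.

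The genuinely routine content — restricting the structural isomorphisms and the monoidal product, and re-checking coherence and $\V$-naturality — is immediate once one observes that the relevant diagrams live in $\uV$ and the functors in question are faithful. The point needing care, and the one I would flag as the crux, is the double use of global choice in the second assertion: both deducing ``equivalence'' from ``fully faithful and essentially surjective'' and upgrading that to a symmetric monoidal equivalence require a choice indexed by the (possibly proper-class-sized) object collection of $\J'$, which is precisely the hypothesis the statement invokes.
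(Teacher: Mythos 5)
Your proof is correct and follows essentially the same route as the paper: the paper likewise takes $\J'$ to be the replete essential image of $j$ (offering the closure of $j(\ob\J)\cup\{I\}$ under $\otimes$ as an alternative), uses choice to make the corestriction an (adjoint) equivalence, and then upgrades it to a symmetric monoidal equivalence. The one step you assert without argument --- that a strong symmetric monoidal $\V$-functor which is an equivalence of underlying $\V$-categories is an equivalence of symmetric monoidal $\V$-categories --- is exactly what the paper justifies by citing Kelly's doctrinal adjunction result, so you should either cite that or supply the check that the quasi-inverse inherits a compatible strong symmetric monoidal structure.
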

\begin{proof}
The first claim is immediate.  Regarding the second, we can take $\ob\J' \subseteq \ob\V$ to be either the class of all objects isomorphic to objects in the image of $j$ or, alternatively, the closure of $j(\ob\J) \cup \{I\}$ in $\ob\V$ under $\otimes$.  In either case, the axiom of choice entails that the corestriction $\overline{j}:\J \rightarrow \J'$ of $j$ participates in an adjoint equivalence of $\V$-categories $\overline{j} \dashv k:\J' \rightarrow \J$.  But $\overline{j}$ is a strong symmetric monoidal $\V$-functor, so by \cite[1.5]{Ke:Doctr}, this adjoint equivalence is a symmetric monoidal $\V$-adjunction.
\end{proof}

\begin{RemSub}\label{rem:assume_subcat}
For ease of notation, we shall often write as if given systems of arities have the special form considered in \bref{thm:sys_ar_equ_subcat}.  In general, this is merely a notational convention, in which we harmlessly omit applications of $j$ and instances of the monoidal structure isomorphisms carried by $j$.   However, in view of \bref{thm:sys_ar_equ_subcat}, we will for many purposes even assume, without loss of generality, that a given system is of the indicated special form.  On this basis, the definitions and results in the sequel shall apply equally to systems of arities in the full generality of Definition \bref{def:sys_ar}.  It is surely worthwhile to retain this generality, as the examples of systems of arities given in \bref{exa:fin_card}, \bref{exa:just_unit}, and \bref{exa:fin_copowers_of_I} are not of the special form in question.  \end{RemSub}

\begin{PropSub}\label{thm:sys_ar_dense}
Any system of arities $j:\J \hookrightarrow \uV$ is a dense $\V$-functor, meaning that for each $V \in \uV$, the identity morphism on the $\V$-functor $\YJ(V) = \uV(j-,V):\J^\op \rightarrow \uV$ presents $V$ as a weighted colimit $V = \YJ(V) \star j$.  Equivalently, the evaluation morphisms $\Ev_J:\uV(J,V) \otimes J \rightarrow V$ associated to the objects $J$ of $\J$ present $V$ as a coend $V = \int^{J \in \J}\uV(J,V) \otimes J$ for the $\V$-functor $\uV(j(-),V) \otimes j(-):\J^\op \otimes \J \rightarrow \uV$.
\end{PropSub}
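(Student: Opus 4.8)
First note that the two assertions coincide. For a weighted diagram $(W,D)$ with $D$ valued in $\uV$, the coend $\int^{B}WB\otimes DB$, when it exists, satisfies $\uV(\int^{B}WB\otimes DB,\,C)\cong\int_{B}\uV(WB\otimes DB,C)\cong\int_{B}\uV(WB,\uV(DB,C))=[\B^{\op},\uV](W,\uV(D-,C))$, which is precisely the object representing colimit cylinders on $(W,D)$ with apex $C$ \pbref{par:weighted_limits}; tracing the identifications shows that under $\int^{J}\uV(jJ,V)\otimes jJ\cong\YJ(V)\star j$ the family of evaluation morphisms $\Ev_{J}$ is the colimit cylinder corresponding to the identity $\V$-natural transformation on $\YJ(V)$. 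So both assertions reduce to the single statement that for every $C\in\uV$ the canonical morphism
$$\Theta_{C}\;:\;\uV(V,C)\;\longrightarrow\;[\J^{\op},\uV]\bigl(\uV(j-,V),\,\uV(j-,C)\bigr),\qquad f\longmapsto\uV(j-,f),$$
is an isomorphism in $\V$, $\V$-naturally in $C$ --- i.e.\ that $j$ is dense.

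The plan is to write down a two-sided inverse of $\Theta_C$ using the unit object. Let $I$ denote the unit object of the symmetric monoidal $\V$-category $\J$; since $j$ is strong monoidal, $jI\cong I$ in $\V$, and composing this with the canonical $\V$-natural isomorphism $\uV(I,X)\cong X$ yields a $\V$-natural isomorphism $e_{X}:\uV(jI,X)\xrightarrow{\ \sim\ }X$. Define $\Psi_{C}$ to be the composite of the $I$-th end projection $[\J^{\op},\uV](\uV(j-,V),\uV(j-,C))\to\uV(\uV(jI,V),\uV(jI,C))$ with $\uV(e_{V}^{-1},e_{C}):\uV(\uV(jI,V),\uV(jI,C))\to\uV(V,C)$; this is a morphism in $\V$. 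That $\Psi_{C}\circ\Theta_{C}=\id$ is immediate: the $I$-component of $\uV(j-,f)$ is $\uV(jI,f)$, and $\V$-naturality of $e$ gives $\uV(e_{V}^{-1},e_{C})\bigl(\uV(jI,f)\bigr)=f$.

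The real content is the reverse identity $\Theta_{C}\circ\Psi_{C}=\id$, i.e.\ that every $\V$-natural transformation $\alpha:\uV(j-,V)\Rightarrow\uV(j-,C)$ equals $\uV(j-,f)$ for $f:=e_{C}\circ\alpha_{I}\circ e_{V}^{-1}$. Here one invokes $\V$-naturality of $\alpha$ at the pair of objects $(K,I)$, for arbitrary $K\in\ob\J$. Full faithfulness of $j$ together with $jI\cong I$ gives identifications $\J^{\op}(K,I)=\J(I,K)\cong\uV(jI,jK)\cong jK$; under these, and under $e_{V},e_{C}$, the two legs of the $\V$-naturality square become morphisms $jK\to\uV(\uV(jK,V),\uV(jK,C))$, and transposing them across the closed structure of $\V$ into morphisms $\uV(jK,V)\otimes jK\to C$ turns them into $f\circ\Ev_{K}$ and $\Ev_{K}\circ(\alpha_{K}\otimes jK)$ respectively. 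Hence $\Ev_{K}\circ(\alpha_{K}\otimes jK)=f\circ\Ev_{K}=\Ev_{K}\circ(\uV(jK,f)\otimes jK)$, the last equality being $\V$-naturality of evaluation; and since a morphism into $C$ is determined by its transpose under $(-)\otimes jK\dashv\uV(jK,-)$, this forces $\alpha_{K}=\uV(jK,f)$. As $K$ is arbitrary, $\alpha=\uV(j-,f)=\Theta_{C}(f)$. It then remains only to check $\V$-naturality of $\Theta$ in $C$, which is routine, and running the argument over all $C$ simultaneously recovers both displayed descriptions of $V$.

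I expect the one step needing care to be the last paragraph: correctly transporting the two legs of the $\V$-naturality square through the chain $\J^{\op}(K,I)\cong jK$, $\uV(jI,-)\cong(-)$, and the currying adjunction, and verifying that they land exactly on $f\circ\Ev_{K}$ and $\Ev_{K}\circ(\alpha_{K}\otimes jK)$. This is pure coherence bookkeeping; conceptually nothing is at stake beyond the observation --- implicit already in \bref{exa:just_unit} --- that the unit object is a dense arity, and that $\V$-naturality propagates this to all of $\J$.
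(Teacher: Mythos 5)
Your proof is correct and in substance identical to the paper's: the paper simply cites Kelly's (5.17) for the density of the one-object subcategory $\{I\}\hookrightarrow\uV$ and Kelly's Theorem 5.13 to transfer density along the fully faithful $j$ (possible since $I\in\J$), and your argument --- inverting $\Theta_C$ by evaluation at the unit object and then propagating to all of $\J$ via $\V$-naturality at $\J^\op(K,I)\cong jK$ --- is exactly a self-contained unpacking of those two citations. The one bookkeeping caveat is that your verification of $\Theta_C\circ\Psi_C=\id$, as written, checks equality only on global elements (actual $\V$-natural transformations $\alpha$), whereas equality of morphisms in $\V$ requires running the same diagram chase against the end projections $\pi_K$ (equivalently, with generalized elements); the argument goes through verbatim.
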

\begin{proof}
By \cite[(5.17)]{Ke:Ba}, the single-object full sub-$\V$-category $\{I\} \hookrightarrow \uV$ is dense, so since $I \in \J$ it follows by \cite[Theorem 5.13]{Ke:Ba} that $j$ is dense.
\end{proof}

\section{Enriched algebraic theories}\label{sec:enr_alg_th,rem:assume_subcat}

Let $j:\J \hookrightarrow \uV$ be a system of arities (\bref{def:sys_ar},\bref{rem:assume_subcat}).  We say that a $\V$-functor is \textit{$\J$-cotensor-preserving} if it preserves all cotensors by objects $J$ of $\J$ (or, rather, their associated objects $j(J)$ of $\V$, \bref{rem:assume_subcat}).

\begin{DefSub}
A $\V$-enriched \textbf{algebraic theory} with arities $\J \hookrightarrow \uV$ (briefly, a $\J$\nolinebreak\mbox{-}\nolinebreak\textbf{theory}) is a $\V$-category $\T$ equipped with a $\J$-cotensor-preserving identity-on-objects $\V$-functor $\tau:\J^\op \rightarrow \T$.
\end{DefSub}

\begin{ExaSub}\label{exa:jtheories}\emptybox
\begin{description}[leftmargin=3.4ex]
\item[\textbf{1. Lawvere theories.}] For the system of arities $\J = \FinCard \hookrightarrow \Set$ of \bref{exa:fin_card}, the resulting notion of $\J$-theory coincides with the notion of algebraic theory defined by Lawvere \cite{Law:PhD}.
\item[\textbf{2. Power's enriched Lawvere theories.}] For the system of arities $\J \hookrightarrow \uV$ of \bref{exa:lfp} in which $\J = \Vfp$ consists of the finitely presentable objects, the resulting notion of $\J$-theory coincides with the notion of \textit{enriched Lawvere theory} defined by Power in \cite{Pow:EnrLaw}.
\item[\textbf{3. Linton's equational theories.}] With $\V = \Set$ and $\J = \Set \hookrightarrow \Set$ the identity functor, the resulting notion of $\J$-theory is the notion of infinitary algebraic theory defined by Linton in \cite{Lin:Eq}.  These were shown by Linton to correspond to arbitrary monads on $\Set$ \cite{Lin:Eq}, and this result follows also from \bref{thm:vth_vmnd} below.
\item[\textbf{4. Dubuc's $\V$-theories; $\V$-monads on $\V$.}]  For the \textit{unrestricted} system of arities with $\J = \uV$ \pbref{exa:unr_ar}, the resulting notion of $\J$-theory is Dubuc's notion of \textit{$\V$-theory} \cite{Dub:EnrStrSem}, which coincides (up to an equivalence) with the notion of $\V$-monad on $\uV$ \pbref{thm:vth_vmnd}.
\item[\textbf{5. Monoids in $\V$. Rings.} $k$-\textbf{algebras.}]  For the system of arities $\{I\} \hookrightarrow \uV$ consisting of just the unit object $I$ of $\V$, $\{I\}$-theories are the same as monoids in the monoidal category $\V$.  Indeed, recall from \bref{exa:just_unit} that this system of arities is isomorphic to the system of arities $j:\text{I}  \rightarrowtail \uV$ given on objects by $* \mapsto I$, so an $\{I\}$-theory is merely an identity-on-objects $\V$-functor $\text{I}^\op = \text{I} \rightarrow \T$, since every $\V$-functor preserves $\{I\}$-cotensors.  Hence an $\{I\}$-theory is merely a one-object $\V$-category $\T$, and these are the same as monoids in $\V$.  When $\V$ is the cartesian closed category of sets $\Set$, where $I = 1$ is the one-point set, $\{1\}$-theories are therefore the same as monoids in the usual sense.  When $\V$ is the category of abelian groups, with unit object $I = \ZZ$ the integers, $\{\ZZ\}$-theories are therefore the same as rings.  Similarly, when $\V$ is the category $\Mod{k}$ of $k$-modules for a commutative ring $k$, $\{k\}$-theories are the same as $k$-algebras.
\item[\textbf{6. Borceux-Day enriched finite power theories.}] For the system of arities $\J = \NN_{\V} \rightarrowtail \uV$ of \bref{exa:fin_copowers_of_I}, the resulting notion of $\J$-theory is essentially the notion of \textit{$\V$-theory} defined by Borceux and Day in \cite{BoDay}, as we shall now demonstrate---except that Borceux and Day restrict attention to a particular kind of closed category $\V$ called a \textit{$\pi$-category}.  By definition, $\NN_{\V}$-cotensors are the same as cotensors by objects of the full sub-$\V$-category $\V_f \hookrightarrow \uV$, recalling that $\V_f$ is equivalent to $\NN_{\V}$ \pbref{exa:fin_copowers_of_I}, and such cotensors are simply (conical) finite powers.  An $\NN_{\V}$-theory is therefore precisely an identity-on-objects $\V$-functor $\tau:\NN_{\V}^\op \rightarrow \T$ that preserves finite powers.  On the other hand, a Borceux-Day $\V$-theory is a \textit{surjective-on-objects} $\V$-functor $\sigma:\V_f^\op \rightarrow \sS$ that preserves finite powers.  Given a Borceux-Day theory $(\sS,\sigma)$, we obtain an associated $\NN_{\V}$-theory $(\T,\tau)$ by factoring the composite $\NN_{\V}^\op \xrightarrow{\sim} \V_f^\op \xrightarrow{\sigma} \sS$ as a composite $\NN_{\V}^\op \xrightarrow{\tau} \T \rightarrow \sS$ consisting of an identity-on-objects $\V$-functor $\tau$ followed by an identity-on-homs $\V$-functor, which is therefore an equivalence of $\V$-categories $\T \simeq \sS$.  In the other direction, given an $\NN_{\V}$-theory $(\T,\tau)$, we obtain\footnote{Note that this passage depends on a choice of pseudo-inverse $\V_f \rightarrow \NN_{\V}$ to the equivalence $\NN_{\V} \xrightarrow{\sim} \V_f$, $n \mapsto n \cdot I$.  Our view is that the theories in question are reasonably seen as conical finite power theories, so that in defining the object $\T(n,1)$ of $n$-ary operations of a particular $\NN_{\V}$-theory $\T$ it is entirely reasonable to make use of the specific finite cardinal $n$.  Contrastingly, the Borceux-Day formulation draws no distinction between the objects of $n$-ary and $m$-ary operations when $n \cdot I$ happens to be \textit{equal} to $m \cdot I$ (on the nose).} a Borceux-Day theory $(\sS,\sigma)$ by similarly factoring the composite $\V_f^\op \xrightarrow{\sim} \NN_{\V}^\op \xrightarrow{\tau} \T$, and again $\sS \simeq \T$.  For cartesian closed $\V$, $\NN_\V$-theories were also studied in \cite[Ch. 6]{Lu:PhD}.
\end{description}
\end{ExaSub}

\begin{ParSub}[\textbf{Forming $\J$-cotensors in a theory $\T$}]\label{par:forming_cot_in_jth}
Observe that each object $J$ of $\J^\op$ is a cotensor $[J,I]$ of the object $I$ of $\J$ by the object $J$ of $\V$.  Indeed the transpose $J \rightarrow \J^\op(J,I) = \uV(I,J)$ of the canonical isomorphism $\ell:I\otimes J \rightarrow J$ is the counit of a representation $\J^\op(-,J) \cong \uV(J,\J^\op(-,I))$.  Hence, given a $\J$-theory $\T$, since the associated $\V$-functor $\tau:\J^\op \rightarrow \T$ preserves $\J$-cotensors, it follows that $J$ is a cotensor $[J,I]$ in $\T$, with counit obtained as the composite
\begin{equation}\label{eq:cot_counit_gammaj}J \rightarrow \J^\op(J,I) \xrightarrow{\tau_{JI}} \T(J,I)\;,\end{equation}
which we denote by $\gamma_J$.

Moreover, every $\J$-theory $\T$ has all $\J$-cotensors:  For each pair of objects $J,K$ of $\J$, the coevaluation morphism
$$\Coev:J \rightarrow \uV(K,J\otimes K) = \J^\op(J\otimes K,K)$$
exhibits $J\otimes K$ as a cotensor $[J,K]$ of $K$ by $J$ in $\J^\op$.  Hence the composite
$$\gamma^K_J = \left(J \xrightarrow{\Coev} \J^\op(J\otimes K,K) \xrightarrow{\tau_{J\otimes K,K}} \T(J\otimes K,K)\right)$$
exhibits $J \otimes K$ as a cotensor $[J,K]$ in $\T$.
\end{ParSub}

\begin{DefSub}\label{def:des_jcot}
An object $C$ of a $\V$-category $\C$ is said to have \textbf{designated} $\J$-\textbf{cotensors} if it is equipped with a specified choice of cotensor $[J,C]$ in $\C$ for each object $J$ of $\J$.  We say that these designated $\J$-cotensors are \textbf{standard}\footnote{We shall find in \bref{thm:jop_algs_bij} and \bref{thm:equiv_def_jth} reasons why this seemingly trifling condition is not only technically relevant but also implicitly inherent in the notion of $\J$-theory.} if $[I,C]$ is just $C$ itself, with the identity morphism $I \rightarrow \C(C,C)$ as counit.  We say that $\C$ has \textbf{designated} $\J$-\textbf{cotensors} if each object of $\C$ has designated $\J$-cotensors.
\end{DefSub}

\begin{ExaSub}\label{exa:std_des_cot_in_jth}
Note that in any $\J$-theory $\T$, the object $I$ has standard designated $\J$-cotensors, namely $[J,I] = J$ \pbref{par:forming_cot_in_jth} for each object $J$ of $\J$.  It shall be convenient to fix a choice of standard designated $\J$-cotensors $[J,K]$ in $\T$ that extends the basic choice $[J,I] = J$ in the case that $K = I$.
\end{ExaSub}

\section{Algebras and morphisms of theories}

\begin{DefSub}
Let $\T$ be a $\J$-theory.
\begin{enumerate}
\item Given a $\V$-category $\C$, a $\T$-\textbf{algebra} in $\C$ is a $\J$-cotensor-preserving $\V$-functor $A:\T \rightarrow \C$.  We shall often call $\T$-algebras in $\uV$ simply $\T$-algebras.
\item We call $\V$-natural transformations between $\T$-algebras $\T$-\textbf{homomorphisms}.  If the object of $\V$-natural transformations $[\T,\C](A,B) = \int_{J \in \T}\C(AJ,BJ)$ exists in $\uV$ for all $\T$-algebras $A,B$ in $\C$, then (in view of \cite[\S 2.2]{Ke:Ba}) $\T$-algebras in $\C$ are the objects of an evident $\V$-category $\Alg{\T}_\C$.  We denote $\Alg{\T}_{\uV}$ by just $\Alg{\T}$.
\end{enumerate}
\end{DefSub}

\begin{RemSub}\label{rem:ca}
Given a $\T$-algebra $A:\T \rightarrow \C$, we call the object $\ca{A} := AI$ of $\C$ the \textbf{carrier} of $A$.  Hence, since each object $J$ of $\T$ is a cotensor $[J,I]$ of $I$ by $J$ in $\T$, the object $AJ$ of $\C$ is a cotensor $[J,\ca{A}]$ of the carrier $\ca{A}$ by $J$ in $\C$.  Since $\T$ has standard designated $\J$-cotensors of $I$ \pbref{exa:std_des_cot_in_jth}, it follows that $A$ equips its carrier $\ca{A}$ with standard designated $\J$-cotensors.
\end{RemSub}

\begin{ExaSub}
\emptybox
\begin{description}[leftmargin=3.4ex]
\item[\textbf{1. General algebras.}]  
For the classes of examples 1, 2, 3, 4, 6 of \bref{exa:jtheories}, i.e. the theories of Lawvere, Power, Linton, Dubuc, and Borceux-Day, we recover in each case the corresponding notion of $\T$-algebra\footnote{For example 6, we obtain just an \textit{equivalence} of the associated $\V$-categories of $\T$-algebras, cf. \bref{exa:jtheories}.}.
\item[\textbf{2. Eilenberg-Moore algebras.}]
Recalling from \bref{exa:jtheories} that $\J$-theories $\T$ for the system of arities with $\J = \uV$ (i.e., Dubuc's $\V$-theories) correspond to $\V$-monads on $\uV$ \pbref{thm:vth_vmnd}, $\T$-algebras in $\uV$ are (up to an equivalence) the same as algebras for the corresponding $\V$-monad \pbref{thm:algs_of_assoc_monad_and_theory}.
\item[\textbf{3.} $R$-\textbf{modules.}]  Recall that an $\{I\}$-theory $\T$ for the system of arities $\{I\} \hookrightarrow \uV$ is merely a one-object $\V$-category $\T$, equivalently, a monoid $R$ in $\V$.  Observe that a $\T$-algebra $M:\T \rightarrow \C$ is merely an arbitrary $\V$-functor, equivalently, an object $M$ of $\C$ equipped with a morphism of monoids $R \rightarrow \C(M,M)$ in $\V$.  When $\C$ is tensored, this is the same as a unital, associative action $R \otimes M \rightarrow M$.  In particular, \textit{$\T$-algebras in $\uV$ are the same as left $R$-modules in $\V$}.  When $\V = \Ab$ is the category of abelian groups, so that $R$ is a ring, $\T$-algebras are therefore just left $R$-modules.  When $\V = \Set$ and $R$ is a group $G$, $\T$-algebras are the same as left $G$-sets.
\end{description}
\end{ExaSub}

\begin{ParSub}
\label{par:forgetful_functor}
Suppose that the category of $\T$-algebras $\Alg{\T}_\C$ in $\C$ exists.  Then \cite[\S 2.2]{Ke:Ba} entails that there is a $\V$-functor
$$\ca{\text{$-$}} = \Ev_I:\Alg{\T}_\C \rightarrow \C$$
given by evaluation at $I$.  Therefore $\ca{\text{$-$}}$ sends each $\T$-algebra $A$ to its carrier $\ca{A}$.  We shall often write $G$ for $\ca{\text{$-$}}$ and $G'$ for the restriction of $G$ to $\Alg{\T}^!_\C$.
\end{ParSub}

Note that $\J^\op$ is a $\J$-theory when equipped with its identity $\V$-functor.  Our study of $\J$-theories will be facilitated by some observations concerning $\J^\op$-algebras.

\begin{LemSub}\label{thm:vfunc_ind_by_des_jcots}
Suppose that a given object $C$ of a $\V$-category $\C$ has standard designated $\J$-cotensors.  Then the induced $\V$-functor $[-,C]:\J^\op \rightarrow \C$ is a $\J^\op$-algebra.  Further, $[-,C]$ strictly preserves the designated $\J$-cotensors $[J,I] = J$ of $I$ in $\J^\op$ \pbref{exa:std_des_cot_in_jth}, i.e., sends them to the designated cotensors $[J,C]$ in $\C$.
\end{LemSub}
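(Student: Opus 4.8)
\emph{Proof proposal.} The plan is to combine the functoriality of cotensors with the concrete description of the $\J$-cotensors of $\J^\op$ recorded in \bref{par:forming_cot_in_jth}. I would first make explicit the induced $\V$-functor $F := [-,C]:\J^\op \rightarrow \C$: since $C$ has designated $\J$-cotensors, functoriality of cotensors (\cite[\S 3.7]{Ke:Ba}) produces a unique $\V$-functor sending each object $J$ of $\J^\op$ to the designated cotensor $[J,C]$ and compatible with the cotensor counits $\varepsilon_J:J \rightarrow \C([J,C],C)$, whose action on hom-objects is the composite
$$\J^\op(J,K) = \J(K,J) \xrightarrow{\;j\;} \uV(K,J) \xrightarrow{\uV(K,\varepsilon_J)} \uV(K,\C([J,C],C)) \;\cong\; \C([J,C],[K,C])\;,$$
the final isomorphism being the one that exhibits $[K,C]$ as a cotensor. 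Both assertions of the lemma will then be read off from this formula by comparing counits.

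For the first assertion it suffices to check that $F$ preserves every $\J$-cotensor in $\J^\op$. By \bref{par:forming_cot_in_jth}, for $J,K \in \J$ this cotensor is the object $J \otimes K$ equipped with the counit $\gamma^K_J$ assembled from the coevaluation $\Coev:J \rightarrow \uV(K,J\otimes K) = \J^\op(J\otimes K,K)$. Since $\J$ is closed under $\otimes$ and $j$ is strong symmetric monoidal, $F(J\otimes K) = [J\otimes K,C]$ is a cotensor of $C$ by $J\otimes K$ in $\C$, and the standard iterated-cotensor isomorphism $[J\otimes K,C]\cong[J,[K,C]]$, composed with the monoidal constraint of $j$, exhibits $F(J\otimes K)$ as a cotensor of $F(K)=[K,C]$ by $J$. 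The remaining --- and most delicate --- step is to verify that $F$ sends the counit $\gamma^K_J$ to the counit of this cotensor structure; this is a diagram chase unwinding the definition of $\gamma^K_J$ through $\Coev$, the definition of $F$ on hom-objects through $\varepsilon_{J\otimes K}$, and the triangle identities of the cotensor representations involved, with the monoidal constraints of $j$ bookkept carefully. Granting this, $F$ preserves all $\J$-cotensors of $\J^\op$ and so is a $\J^\op$-algebra.

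For the second assertion, recall from \bref{par:forming_cot_in_jth} that the designated cotensor $[J,I]=J$ of $I$ in $\J^\op$ carries as its standard counit the transpose $J \rightarrow \J^\op(J,I) = \uV(I,J)$ of $\ell_J:I\otimes J \rightarrow J$. Since the designated $\J$-cotensors of $C$ are standard, $F(I)=[I,C]=C$ with identity counit; hence the cotensor of $F(I)=C$ by $J$ produced from $F$ and this counit has underlying object $F(J)=[J,C]$, the designated cotensor in $\C$, and its counit coincides with the designated counit $\varepsilon_J$ once one evaluates the hom-object formula for $F$ above at $K=I$ and invokes the unit isomorphism $\uV(I,-)\cong(-)$ together with $jI\cong I$. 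This yields the strict preservation claimed. The only genuine obstacle in the whole argument is the counit comparison in the second paragraph; everything else is unwinding of definitions and standard enriched-categorical bookkeeping.
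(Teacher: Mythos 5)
Your treatment of the second assertion is essentially the paper's: reduce to the case $K=I$, use standardness of the designated cotensors ($[I,C]=C$ with identity counit) and the fact that $\gamma_J$ is the canonical isomorphism $J\cong\uV(I,J)$, and conclude by naturality of the unit isomorphism $\uV(I,-)\cong(-)$. That part is fine. For the first assertion, however, your route is genuinely different from the paper's and, as written, has a gap precisely where all the content lives: you reduce the claim to the statement that $[-,C]$ carries the counit $\gamma^K_J$ of the cotensor $[J,K]=J\otimes K$ in $\J^\op$ to the counit exhibiting $[J\otimes K,C]\cong[J,[K,C]]$ as a cotensor of $[K,C]$ by $J$, correctly flag this as the delicate step, and then write ``Granting this''. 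That comparison is true and your list of ingredients for it is the right one, but in your decomposition it \emph{is} the first assertion, so deferring it leaves the lemma unproved.

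The paper avoids this counit chase entirely by a representability argument worth internalizing: a $\V$-functor preserves a weighted limit if and only if each representable $\C(D,-)$ sends the image cylinder to a limit cylinder, and here
$$\C(D,[-,C])\;\cong\;\uV(j^{\op}(-),\C(D,C)):\J^{\op}\longrightarrow\uV$$
factors as $\uV(-,\C(D,C))\circ j^{\op}$. The factor $j^{\op}$ preserves $\J$-cotensors because $j$ is strong monoidal (a $\J$-cotensor in $\J^{\op}$ is a tensor $J\otimes K$ in $\J$), and $\uV(-,W)$ sends tensors in $\uV$ to cotensors in $\uV$ via $\uV(J\otimes V,W)\cong[J,\uV(V,W)]$. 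Since this holds for every $D$, $[-,C]$ preserves $\J$-cotensors with no explicit counit comparison needed. If you prefer to keep your direct approach, you must actually carry out the verification that $F_{J\otimes K,K}\circ\gamma^K_J$ is the iterated-cotensor counit; otherwise, substitute the representability argument, which reduces the first assertion to two one-line observations.
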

\begin{proof}
$[-,C]$ preserves $\J$-cotensors, since for each object $D$ of $\C$, the $\V$-functor $\C(D,[-,C]):\J^\op \rightarrow \uV$ is isomorphic to the composite 
$$\J^\op \xrightarrow{j^\op} \uV^\op \xrightarrow{\uV(-,\T(D,C))} \uV\;,$$
whose factors both preserve $\J$-cotensors.  Next, letting $\delta_J:J \rightarrow \C([J,C],C)$ denote the designated cotensor counit for each $J \in \ob\J$, consider the following diagram
$$
\renewcommand{\objectstyle}{\scriptstyle}
\renewcommand{\labelstyle}{\scriptstyle}
\xymatrix@C=8ex @R=4ex{
J \ar[r]^{\gamma_J} \ar[dd]_{\delta_J} & \J^\op(J,I) \ar@{=}[d] \ar[r]^{[-,C]_{JI}} & \C([J,C],C) \ar[d]^{\C(-,C)_{C,[J,C]}}\\
& \uV(I,J) \ar[dr]_(.3){\uV(I,\delta_J)} & \uV(\C(C,C),\C([J,C],C)) \ar[d]^{\uV(\delta_I,1)}\\
\C([J,C],C) & & \uV(I,\C([J,C],C)) \ar[ll]^\sim
}
$$
in which the bottom row is the canonical isomorphism and $\gamma_J$ is the counit for the cotensor $[J,I] = J$ in $\J^\op$ \pbref{par:forming_cot_in_jth}.  Hence $\gamma_J$ is just the canonical isomorphism $J \xrightarrow{\sim} \uV(I,J)$, so the leftmost cell commutes.  The rightmost cell also commutes, by the $\V$-naturality of $\delta_K$ in $K \in \J$.  But the counit $\delta_I:I \rightarrow \C(C,C)$ for the cotensor $[I,C] = C$ is merely the identity arrow on $C$ in $\C$ \pbref{def:des_jcot}, and it follows that the rightmost exterior face of the diagram is precisely the inverse of bottom face.  Hence the composite of the top face is $\delta_J$, as needed.
\end{proof}

\begin{RemSub}\label{rem:charn_of_jop_alg_assoc_to_obj_w_jcots}
Note that the $\J^\op$-algebra $[-,C]:\J^\op \rightarrow \C$ given in \bref{thm:vfunc_ind_by_des_jcots} may be characterized as the unique $\V$-functor that is given on objects by $J \mapsto [J,C]$ and makes the designated $\J$-cotensor counits $\delta_J:J \rightarrow \C([J,C],C)$ $\V$-natural in $J \in \J$.
\end{RemSub}

\begin{PropSub}\label{thm:jop_algs_bij}
$\J^\op$-algebras $A$ in any given $\V$-category $\C$ are in bijective correspondence with objects $C$ of $\C$ equipped with standard designated $\J$-cotensors.  Under this bijection, $A$ corresponds to its carrier $\ca{A}$ (with the designated $\J$-cotensors furnished by $A$, \bref{rem:ca}) and $C$ corresponds to the $\J^\op$-algebra $[-,C]:\J^\op \rightarrow \C$ of \bref{thm:vfunc_ind_by_des_jcots}.
\end{PropSub}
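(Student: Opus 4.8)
The plan is to exhibit the two assignments of the statement and verify directly that they are mutually inverse. Both are already known to be well defined: given an object $C$ of $\C$ with standard designated $\J$-cotensors, the induced $\V$-functor $[-,C]:\J^\op \to \C$ is a $\J^\op$-algebra by \bref{thm:vfunc_ind_by_des_jcots}; and given a $\J^\op$-algebra $A:\J^\op \to \C$, its carrier $\ca{A} = AI$ is equipped with standard designated $\J$-cotensors $[J,\ca{A}] = AJ$, by \bref{rem:ca} together with \bref{par:forming_cot_in_jth} (since $A$ preserves the cotensor $[J,I]=J$ in $\J^\op$, whose counit is $\gamma_J$, the object $AJ$ becomes the cotensor $[J,\ca{A}]$ in $\C$ with counit $\delta_J := A_{JI}\circ\gamma_J$). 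So the only thing left is to check that the two round trips are identities.

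First I would treat the round trip that starts from an object $C$ with standard designated $\J$-cotensors. By \bref{thm:vfunc_ind_by_des_jcots} the $\V$-functor $[-,C]$ sends $I$ to $[I,C] = C$ and strictly preserves the designated cotensors $[J,I] = J$ of $I$ in $\J^\op$, carrying them to the designated cotensors $[J,C]$ of $C$ in $\C$ (with their original counits $\delta_J$). Hence the carrier of the $\J^\op$-algebra $[-,C]$ is $C$ itself, and the standard designated $\J$-cotensors with which $[-,C]$ equips this carrier are exactly the original $[J,C]$; thus this round trip returns the data we began with.

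Next I would treat the round trip that starts from a $\J^\op$-algebra $A$. By the remarks above, $\ca{A}$ carries standard designated $\J$-cotensors $[J,\ca{A}] = AJ$ with counits $\delta_J = A_{JI}\circ\gamma_J$, and I must show that the $\J^\op$-algebra $[-,\ca{A}]$ built from this data coincides with $A$. Here I would invoke \bref{rem:charn_of_jop_alg_assoc_to_obj_w_jcots}: $[-,\ca{A}]$ is the \emph{unique} $\V$-functor which on objects is $J \mapsto AJ$ and which renders the counits $\delta_J : J \to \C(AJ,\ca{A})$ $\V$-natural in $J \in \J$. Since $A$ is by construction $J \mapsto AJ$ on objects, it suffices to see that $A$ renders the $\delta_J$ $\V$-natural; and this is immediate, since the $\gamma_J : J \to \J^\op(J,I)$ are the components of the canonical $\V$-natural isomorphism $j \cong \J^\op(-,I)$ of \bref{par:forming_cot_in_jth}, and $\delta_J = A_{JI}\circ\gamma_J$ is obtained by further composing with the $\V$-natural family $A_{-,I}:\J^\op(-,I) \Rightarrow \C(A-,AI)$ supplied by the $\V$-functoriality of $A$. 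Hence $A = [-,\ca{A}]$, and this round trip is also the identity.

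The only real obstacle is bookkeeping: one must keep in mind throughout that the bijection is not between $\J^\op$-algebras and mere objects of $\C$, but between $\J^\op$-algebras and objects \emph{equipped with a choice of standard designated $\J$-cotensors}, and one must carry the cotensor counit data through both round trips so that it can be matched against the uniqueness clause of \bref{rem:charn_of_jop_alg_assoc_to_obj_w_jcots}. Once the counits are tracked carefully, both composites are visibly identities and nothing further is needed.
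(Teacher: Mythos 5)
Your proof is correct and follows exactly the route the paper indicates: the paper's own proof simply says the two processes are mutually inverse by the second statement of \bref{thm:vfunc_ind_by_des_jcots} (for the round trip starting from $C$) and the uniqueness clause of \bref{rem:charn_of_jop_alg_assoc_to_obj_w_jcots} (for the round trip starting from $A$), which is precisely what you spell out. Your careful tracking of the counit data is exactly the ``straightforward'' verification the paper leaves to the reader.
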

\begin{proof}
It is straightforward to show that the indicated processes are mutually inverse by applying the second statement in \bref{thm:vfunc_ind_by_des_jcots} and the unique characterization of $[-,C]$ given in \bref{rem:charn_of_jop_alg_assoc_to_obj_w_jcots}.
\end{proof}

\begin{CorSub}\label{thm:equiv_def_jth}
A $\J$-theory is equivalently defined as a $\V$-category $\T$ with $\ob\T = \ob\J$ in which each object $J$ is equipped with the structure of a cotensor $[J,I]$, such that these designated $\J$-cotensors of $I$ are standard \pbref{def:des_jcot}.  The associated $\V$-functor $\tau:\J^\op \rightarrow \T$ is then $[-,I]$ \pbref{rem:charn_of_jop_alg_assoc_to_obj_w_jcots}.
\end{CorSub}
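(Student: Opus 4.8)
The plan is to obtain this as an essentially immediate corollary of Proposition \bref{thm:jop_algs_bij}, applied with $\C = \T$. First I would observe that the datum of a $\J$-theory amounts to a $\V$-category $\T$ with $\ob\T = \ob\J$ together with a $\J^\op$-algebra $\tau : \J^\op \to \T$ \emph{in $\T$} that is identity-on-objects: indeed, a $\J$-cotensor-preserving $\V$-functor $\tau : \J^\op \to \T$ is by definition exactly a $\J^\op$-algebra in $\T$, and requiring it to be identity-on-objects is equivalent to having $\ob\T = \ob\J^\op = \ob\J$ with $\tau J = J$ for all $J$. On the other side, the proposed reformulation equips a $\V$-category $\T$ with $\ob\T = \ob\J$ with a choice, on the object $I \in \ob\J = \ob\T$, of standard designated $\J$-cotensors $[J,I]$ whose underlying object is the object $J$ of $\T$, for each $J \in \ob\J$.

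Second I would run the matching through Proposition \bref{thm:jop_algs_bij}. That proposition gives a bijection between $\J^\op$-algebras $A$ in $\T$ and objects $C$ of $\T$ equipped with standard designated $\J$-cotensors, under which $A$ corresponds to its carrier $\ca{A} = AI$ together with the designated $\J$-cotensors $[J,\ca{A}] = AJ$ furnished by $A$. Hence $A$ is identity-on-objects precisely when $AJ = J$ as objects of $\T$ for every $J \in \ob\J$; in that case $\ca{A} = AI = I$, and the associated object-with-cotensors is exactly the object $I$ of $\T$ together with standard designated cotensors $[J,I] = J$. Conversely, any such choice of standard designated $\J$-cotensors $[J,I] = J$ on $I$ gives, via \bref{thm:vfunc_ind_by_des_jcots}, a $\J^\op$-algebra $[-,I] : \J^\op \to \T$ --- in particular a $\J$-cotensor-preserving $\V$-functor --- which is identity-on-objects since its value at $J$ is the object $[J,I] = J$. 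Thus the bijection of \bref{thm:jop_algs_bij} restricts to a bijection between $\J$-theory structures on $\T$ and the reformulated structures; the final clause $\tau = [-,I]$ is then the content of the second sentence of \bref{thm:jop_algs_bij} (equivalently, of the uniqueness in \bref{rem:charn_of_jop_alg_assoc_to_obj_w_jcots}, since by \bref{par:forming_cot_in_jth} the $\V$-functor $\tau$ does make the cotensor counits $\V$-natural). For the well-definedness of the passage from a $\J$-theory to the reformulated data I would simply cite \bref{exa:std_des_cot_in_jth}, which records that the object $I$ of any $\J$-theory $\T$ already carries standard designated $\J$-cotensors $[J,I] = J$.

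I do not anticipate any genuine obstacle: the proof is pure bookkeeping on top of \bref{thm:jop_algs_bij}. The only point that wants a little care is keeping straight the two distinct ``identity-like'' conditions --- $\tau$ being identity-on-objects on the one hand, and the cotensor object $[J,I]$ being literally the object $J$ of $\T$ on the other --- and noticing that, given the latter, standardness forces the carrier to be the object $I$ itself rather than merely something isomorphic to it, so that no further choices are hidden.
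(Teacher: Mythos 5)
Your proposal is correct and follows exactly the paper's route: the paper's own proof is the one-line observation that a $\J$-theory is precisely a $\V$-category $\T$ with an identity-on-objects $\J^\op$-algebra $\tau:\J^\op\to\T$, whence the statement follows from Proposition \bref{thm:jop_algs_bij}. Your write-up merely spells out the bookkeeping (matching the identity-on-objects condition with the condition that the designated cotensor $[J,I]$ be the object $J$ itself), which the paper leaves implicit.
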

\begin{proof}
By definition, a $\J$-theory $(\T,\tau)$ consists of a $\V$-category $\T$ together with an identity-on-objects $\J^\op$-algebra $\tau:\J^\op \rightarrow \T$, so the result follows from \bref{thm:jop_algs_bij}.
\end{proof}

\begin{PropSub}\label{thm:charns_talgebras}
Given a $\V$-functor $A:\T \rightarrow \C$ on a $\J$-theory $(\T,\tau)$, the following are equivalent:
\begin{enumerate}
\item $A$ is a $\T$-algebra.
\item $A \circ \tau:\J^\op \rightarrow \C$ is a $\J^\op$-algebra.
\item $A$ preserves $\J$-cotensors of $I$.
\end{enumerate}
\end{PropSub}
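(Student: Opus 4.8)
The plan is to reduce everything to the already-established analysis of $\J^\op$-algebras, using the explicit description of $\J$-cotensors in a theory from \bref{par:forming_cot_in_jth}. The key preliminary observation is that, for objects $J,K$ of $\J$, the cotensor $[J,K]$ in $\T$ is realized by the object $J\otimes K$ with counit $\gamma^K_J=\tau_{J\otimes K,K}\circ\Coev$, where $\Coev:J\rightarrow\J^\op(J\otimes K,K)$ is precisely the counit realizing $[J,K]$ in $\J^\op$. Consequently $A$ sends this cotensor cylinder to the cylinder $\bigl(A(J\otimes K),\,A(\gamma^K_J)\bigr)=\bigl((A\tau)(J\otimes K),\,(A\tau)(\Coev)\bigr)$, so $A$ preserves the cotensor $[J,K]$ in $\T$ if and only if $A\tau$ preserves the cotensor $[J,K]$ in $\J^\op$. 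Since $\ob\T=\ob\J$, every $\J$-cotensor in $\T$ has this form, whence $A$ is a $\T$-algebra exactly when $A\tau$ is a $\J^\op$-algebra; this already gives $(1)\Leftrightarrow(2)$. Taking $K=I$, the same equivalence shows that condition $(3)$ holds if and only if $A\tau$ preserves $\J$-cotensors of $I$ in $\J^\op$.

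In view of this, and since $(1)\Rightarrow(3)$ is trivial ($I\in\J$), it remains only to show that a $\V$-functor $B:\J^\op\rightarrow\C$ which preserves $\J$-cotensors of $I$ is automatically a $\J^\op$-algebra; applying this to $B=A\tau$ then yields $(3)\Rightarrow(2)$ and closes the circle. Given such a $B$, for each $J\in\J$ the object $BJ$, equipped with the counit $B(\iota_J)$ obtained by applying $B$ to the canonical isomorphism $\iota_J:J\xrightarrow{\sim}\J^\op(J,I)$ that exhibits $J$ as the cotensor $[J,I]$ in $\J^\op$ \pbref{par:forming_cot_in_jth}, is a cotensor $[J,BI]$ in $\C$; for $J=I$ this designated cotensor is $BI$ itself with identity counit, so the carrier $BI$ carries \emph{standard} designated $\J$-cotensors \pbref{def:des_jcot}. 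By \bref{thm:vfunc_ind_by_des_jcots} these induce a $\J^\op$-algebra $[-,BI]:\J^\op\rightarrow\C$, which by \bref{rem:charn_of_jop_alg_assoc_to_obj_w_jcots} is the unique $\V$-functor sending each $J$ to $[J,BI]=BJ$ and making the counits $B(\iota_J)$ $\V$-natural in $J$. But $B$ is itself such a $\V$-functor: it has the right effect on objects, and it makes the family $B(\iota_J)$ $\V$-natural because the family $(\iota_J)_J$ is $\V$-natural in $\J^\op$ (being the cotensor counits, cf.\ \bref{par:forming_cot_in_jth}) and $B$ is $\V$-functorial. Hence $B=[-,BI]$ is a $\J^\op$-algebra.

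I expect the only real subtlety to be bookkeeping rather than anything conceptual: one must keep straight that the cotensor counits $\gamma^K_J$ in $\T$ are the $\tau$-images of the coevaluation counits in $\J^\op$ — so that preservation of a cotensor by $A$ is detected entirely by the restriction $A\tau$ — and, in the last step, that applying the $\V$-functor $B$ to the (already $\V$-natural) family of counits in $\J^\op$ again yields a $\V$-natural family, which is exactly the hypothesis needed to invoke the uniqueness clause of \bref{rem:charn_of_jop_alg_assoc_to_obj_w_jcots}. The closure of $\J$ under $\otimes$, and the interaction of the monoidal structure of $\J$ with cotensors, enter only implicitly here, through \bref{par:forming_cot_in_jth} and \bref{thm:vfunc_ind_by_des_jcots}; no fresh input about $\V$ or $\J$ is required beyond what those results already package.
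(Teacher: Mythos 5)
Your argument is correct and follows the same route as the paper's own proof: establish $(1)\Leftrightarrow(2)$ from the fact \pbref{par:forming_cot_in_jth} that the $\J$-cotensors in $\T$ are the $\tau$-images of those in $\J^\op$, reduce $(3)\Rightarrow(1)$ to the case $\T=\J^\op$, and then conclude by identifying $B$ with the $\J^\op$-algebra $[-,BI]$ via the uniqueness clause of \bref{rem:charn_of_jop_alg_assoc_to_obj_w_jcots}. You simply spell out more of the bookkeeping (e.g.\ that $B(\iota_I)$ is the identity counit, so the induced designated cotensors are standard) that the paper leaves implicit.
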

\begin{proof}
The equivalence of 1 and 2 is immediate once we recall from \bref{par:forming_cot_in_jth} that the $\J$-cotensors in $\T$ are obtained from those in $\J^\op$ via $\tau:\J^\op \rightarrow \T$.  The implication $1 \Rightarrow 3$ is trivial.  In view of the equivalence $1 \Leftrightarrow 2$, the task of proving $3 \Rightarrow 1$ now reduces immediately to the case where $\T = \J^\op$.  Now if $A:\J^\op \rightarrow \C$ preserves $\J$-cotensors of $I$ then, letting $C = AI$, we find that $A$ endows $C$ with standard designated $\J$-cotensors, and by applying the characterization of the induced $\V$-functor $[-,C]:\J^\op \rightarrow \C$ given in \bref{rem:charn_of_jop_alg_assoc_to_obj_w_jcots}, we deduce that $A = [-,C]$, but $[-,C]$ is a $\J^\op$-algebra \pbref{thm:vfunc_ind_by_des_jcots}.
\end{proof}

\begin{DefSub}\label{def:normal_talg}
Given a $\J$-theory $\T$ and a $\V$-category $\C$ with standard designated $\J$-cotensors \pbref{def:des_jcot}, a \textbf{normal} $\T$-\textbf{algebra} (in $\C$) is a $\V$-functor $A:\T \rightarrow \C$ that strictly preserves the designated $\J$-cotensors $[J,I] = J$ of $I$ in $\T$, i.e. sends them to the designated $\J$-cotensors $[J,\ca{A}]$ of $\ca{A} = AI$ in $\C$.  Any normal $\T$-algebra is indeed a $\T$-algebra, by \bref{thm:charns_talgebras}.  If the $\V$-category $\Alg{\T}_\C$ exists, then we denote by $\Alg{\T}^!_\C$ its full sub-$\V$-category consisting of normal $\T$-algebras.
\end{DefSub}

\begin{RemSub}\label{rem:des_cot_v}
Since $\uV$ will typically play the role of the $\V$-category $\C$, we shall endow $\uV$ with standard designated $\J$-cotensors $[J,V]$ of each of its objects $V$ by forcing $[I,V] = V$ and taking $[J,V] = \uV(J,V)$ otherwise.  We write $\Alg{\T}^! = \Alg{\T}_{\uV}^!$.
\end{RemSub}

\begin{PropSub}\label{thm:charns_talgs}
Let $(\T,\tau)$ be a $\J$-theory, and let $A:\T \rightarrow \C$ be a $\V$-functor valued in a $\V$-category with standard designated $\J$-cotensors \pbref{def:des_jcot}.  Then, writing $\ca{A} = AI$, there is a $\V$-natural transformation
\begin{equation}\label{eq:triangle_char_talgs}
\xymatrix@C=8ex @R=4ex{
\T \ar[rr]^A|(.0){}="s1" &                                                     & \C \\
                         &  \J^\op \ar[ul]^\tau \ar[ur]_{[-,|A|]}|{}="t1"  &
 \ar@{}"s1";"t1"|(.6){}="s2"|(.8){}="t2"
 \ar@{=>}"s2";"t2"^{\kappa_A}
}
\end{equation}
\vspace{-3ex}

\noindent such that
\begin{enumerate}
\item $A$ is a $\T$-algebra iff $\kappa_A$ is an isomorphism, iff the triangle commutes up to isomorphism.
\item $A$ is a normal $\T$-algebra iff $\kappa_A$ is an identity, iff the triangle commutes (strictly).
\end{enumerate}
\end{PropSub}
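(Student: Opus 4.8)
The plan is to define $\kappa_A$ componentwise from the universal property of the designated $\J$-cotensors of $\ca{A}$ in $\C$, and then to read off properties 1 and 2 from the \emph{uniqueness} clause of that universal property, together with Corollary~\bref{thm:equiv_def_jth} and Proposition~\bref{thm:charns_talgebras}.

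I would first recall from \bref{par:forming_cot_in_jth} that, for each $J \in \J$, the object $J$ of $\T$ is a cotensor $[J,I]$ in $\T$ with counit $\gamma_J : J \to \T(J,I)$ (the composite of $\tau_{JI}$ with the canonical isomorphism $J \cong \uV(I,J) = \J^\op(J,I)$), and that $\tau = [-,I] : \J^\op \to \T$ by Corollary~\bref{thm:equiv_def_jth}. Applying the $\V$-functor $A$ to this cotensor produces the object $AJ$ of $\C$ equipped with the ``candidate counit'' $A_{J,I}\circ\gamma_J : J \to \C(AJ,\ca{A})$, which is a genuine cotensor counit for $\ca{A}$ by $J$ precisely when $A$ preserves $[J,I]$. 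Since $\ca{A} = AI$ carries standard designated $\J$-cotensors $[J,\ca{A}]$ with counits $\delta_J : J \to \C([J,\ca{A}],\ca{A})$, the universal property of the cotensor $[J,\ca{A}]$ yields a unique morphism $(\kappa_A)_J : AJ \to [J,\ca{A}]$ with $\C((\kappa_A)_J,\ca{A})\circ\delta_J = A_{J,I}\circ\gamma_J$ as morphisms $J \to \C(AJ,\ca{A})$. I would then check that the family $\bigl((\kappa_A)_J\bigr)_J$ is $\V$-natural in $J \in \J^\op$, so as to obtain the required $\V$-natural transformation $\kappa_A : A\tau \Rightarrow [-,\ca{A}]$ in \pbref{eq:triangle_char_talgs}. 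I expect this $\V$-naturality to be the only point requiring genuine care; it follows by a diagram chase combining the $\V$-naturality of $\delta$ (the characterization in \bref{rem:charn_of_jop_alg_assoc_to_obj_w_jcots}) with the $\V$-naturality of $\gamma$ pushed forward along $A$, invoking the uniqueness in the universal property of the cotensors $[J',\ca{A}]$ to identify the relevant composites. Conceptually the same point is visible on recognizing $[-,\ca{A}]$ as a pointwise right Kan extension of $\ca{A}$ along the $\V$-functor $\ONE \to \J^\op$ picking out $I$ --- the pointwise formula gives the cotensor by $\J^\op(J,I) \cong J$, which is $[J,\ca{A}]$ by \bref{rem:charn_of_jop_alg_assoc_to_obj_w_jcots} --- whereupon $\kappa_A$ is just the canonical comparison determined by $\id_{\ca{A}} : (A\tau)(I) \to \ca{A}$.

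For part 2 I would use only the equation defining $(\kappa_A)_J$. The transformation $\kappa_A$ is an identity iff every component $(\kappa_A)_J$ is an identity morphism, and since an identity $\V$-natural transformation forces its source and target $\V$-functors to coincide, this is exactly the assertion that the triangle \pbref{eq:triangle_char_talgs} commutes strictly. By uniqueness, $(\kappa_A)_J$ is an identity iff $AJ = [J,\ca{A}]$ and $A_{J,I}\circ\gamma_J = \delta_J$, i.e.\ iff $A$ carries the designated cotensor $(J,\gamma_J)$ of $I$ in $\T$ to the designated cotensor $([J,\ca{A}],\delta_J)$ in $\C$; requiring this for every $J$ is precisely the condition that $A$ be a normal $\T$-algebra (Definition~\bref{def:normal_talg}). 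This establishes all three equivalences in 2.

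For part 1, I would argue in a cycle. The component $(\kappa_A)_J$ is invertible iff the candidate counit $A_{J,I}\circ\gamma_J$ itself exhibits $AJ$ as a cotensor of $\ca{A}$ by $J$ (a comparison morphism to the designated cotensor being invertible exactly when its source is also a cotensor on the same data), i.e.\ iff $A$ preserves the cotensor $[J,I]$ of $I$ in $\T$; hence $\kappa_A$ is an isomorphism iff $A$ preserves all $\J$-cotensors of $I$, which by Proposition~\bref{thm:charns_talgebras} is equivalent to $A$ being a $\T$-algebra. Trivially, $\kappa_A$ an isomorphism implies the triangle \pbref{eq:triangle_char_talgs} commutes up to isomorphism; conversely, if it commutes up to isomorphism then $A\tau$ is isomorphic to $[-,\ca{A}]$, which preserves $\J$-cotensors by Lemma~\bref{thm:vfunc_ind_by_des_jcots}, so $A\tau$ does too, whence $A$ is a $\T$-algebra by Proposition~\bref{thm:charns_talgebras}. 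Closing this cycle completes the proof of 1.
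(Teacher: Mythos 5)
Your proof is correct and follows essentially the same route as the paper's: define $\kappa_A$ componentwise as the canonical cotensor-comparison morphism, read off the first equivalences in 1 and 2 from the universal property, and obtain the second equivalences from Proposition \bref{thm:charns_talgebras} together with the fact that $[-,\ca{A}]$ is a (normal) $\J^\op$-algebra (Lemma \bref{thm:vfunc_ind_by_des_jcots}). The only spot wanting an extra word is the implication ``triangle commutes strictly $\Rightarrow$ $\kappa_A$ is an identity'' in part 2, which uses the second clause of Lemma \bref{thm:vfunc_ind_by_des_jcots} (that $[-,\ca{A}]_{J,I}$ carries the counit of $[J,I]=J$ in $\J^\op$ to $\delta_J$) to see that $A\tau=[-,\ca{A}]$ forces $A_{J,I}\circ\gamma_J=\delta_J$, so that the identity satisfies the equation uniquely characterizing $(\kappa_A)_J$.
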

\begin{proof}
Each object $J$ of $\J$ is a cotensor $[J,I]$ in $\T$, so we have a comparison morphism $\kappa_A^J:AJ = A[J,I] \rightarrow [J,AI] = [J,\ca{A}]$ in $\C$, and since $\tau = [-,I]$ \pbref{thm:equiv_def_jth}, these morphisms constitute a $\V$-natural transformation $\kappa_A$ of the needed form.  The first equivalence in 1 is immediate, as is the first equivalence in 2.  Note also that $A$ is a $\T$-algebra iff $A \circ \tau$ is a $\J^\op$-algebra \pbref{thm:charns_talgebras}, and (by essentially the same proof) $A$ is a normal $\T$-algebra iff $A \circ \tau$ is a normal $\J^\op$-algebra.  Hence the remaining equivalences in 1 and 2 follow as soon as we observe that $[-,\ca{A}]$ is a normal $\J^\op$-algebra, by \bref{thm:vfunc_ind_by_des_jcots}.
\end{proof}

\begin{RemSub}\label{rem:vnat_kappa_in_a}
If $\Alg{\T}_\C$ exists and $\C$ has standard designated $\J$-cotensors, then for each fixed object $J \in \ob\T = \ob\J$, the comparison isomorphisms $\kappa_A^J$ associated to $\T$-algebras $A$ constitute a $\V$-natural isomorphism $\kappa^J$ from the evaluation $\V$-functor $\Ev_{[J,I]} = \Ev_J:\Alg{\T}_\C \rightarrow \C$ to the composite $\Alg{\T}_\C \xrightarrow{\Ev_I} \C \xrightarrow{[J,-]} \C$.  Note that for each normal $\T$-algebra $A$, $\kappa^J_A$ is an identity $AJ  = [J,AI]$.
\end{RemSub}

\begin{PropSub}\label{thm:equiv_alg_nalg}
Let $\C$ be a $\V$-category with standard designated $\J$-cotensors, and suppose that the $\V$-category of $\T$-algebras $\Alg{\T}_\C$ exists.  Then $\Alg{\T}_\C$ is equivalent to its full sub-$\V$-category $\Alg{\T}^!_\C$, consisting of all normal $\T$-algebras in $\C$.
\end{PropSub}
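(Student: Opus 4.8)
The plan is to show that the inclusion $\V$-functor $\Alg{\T}^!_\C \hookrightarrow \Alg{\T}_\C$ is an equivalence of $\V$-categories. Being the inclusion of a full sub-$\V$-category it is automatically fully faithful, and a $\V$-functor is an equivalence of $\V$-categories precisely when it is fully faithful and essentially surjective on objects; so the entire problem reduces to showing that \emph{every $\T$-algebra $A:\T\to\C$ is isomorphic, in $\Alg{\T}_\C$, to a normal $\T$-algebra}.

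First I would fix a $\T$-algebra $A:\T\to\C$ with carrier $\ca{A}=AI$ and invoke \bref{thm:charns_talgs}: since $A$ is a $\T$-algebra, the comparison morphisms $\kappa_A^J:AJ=A[J,I]\to[J,\ca{A}]$ into the designated $\J$-cotensors of $\ca{A}$ in $\C$ are isomorphisms, $\V$-natural in $J\in\J$. I would record the small but essential point that $\kappa_A^I=1_{\ca{A}}$: the designated cotensor $[I,\ca{A}]=\ca{A}$ is standard, so its counit $\delta_I$ is the identity of $\ca{A}$ in $\C$, while the counit $\gamma_I$ of the cotensor $[I,I]=I$ in $\T$ picks out the identity of $I$ in $\T$ \pbref{par:forming_cot_in_jth}, and $A$ preserves identities, so the defining equation of $\kappa_A^I$ forces $\kappa_A^I=1_{\ca{A}}$. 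Next I would transport $A$ along the family $(\kappa_A^J)_J$: by the standard transport of structure along a family of isomorphisms, there is a unique $\V$-functor $A^!:\T\to\C$ with $A^!J=[J,\ca{A}]$ on objects (so $A^!I=\ca{A}$ by the previous point) and $A^!_{JK}=\C((\kappa_A^J)^{-1},\kappa_A^K)\circ A_{JK}$ on hom-objects, for which $\kappa_A:A\Rightarrow A^!$ is a $\V$-natural isomorphism. Being isomorphic to the $\T$-algebra $A$ (or directly by \bref{thm:charns_talgebras}), $A^!$ is again a $\T$-algebra, and since $\kappa_A^I$ is an identity, $\kappa_A$ is an isomorphism $A\cong A^!$ in $\Alg{\T}_\C$.

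It then remains to verify that $A^!$ is \emph{normal}, i.e.\ that it strictly preserves the designated $\J$-cotensors $[J,I]=J$ of $I$ in $\T$; by \bref{thm:charns_talgs}(2) applied to $A^!$ this is equivalent to showing that each comparison morphism $\kappa_{A^!}^J:A^!J\to[J,A^!I]=[J,\ca{A}]$ is the identity. Here I would unwind the construction of $\kappa$ from the proof of \bref{thm:charns_talgs} together with \bref{par:forming_cot_in_jth}: $\kappa_{A^!}^J$ is the unique morphism satisfying $A^!_{JI}\circ\gamma_J=\C(\kappa_{A^!}^J,1_{\ca{A}})\circ\delta_J$, where $\gamma_J:J\to\T(J,I)$ is the cotensor counit in $\T$ and $\delta_J:J\to\C([J,\ca{A}],\ca{A})$ is the designated counit in $\C$. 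Substituting the conjugation formula for $A^!_{JI}$, using $\kappa_A^I=1_{\ca{A}}$, and invoking the analogous identity $A_{JI}\circ\gamma_J=\C(\kappa_A^J,1_{\ca{A}})\circ\delta_J$ for $A$ itself, the two conjugating factors cancel and one obtains $A^!_{JI}\circ\gamma_J=\delta_J$, whence $\kappa_{A^!}^J=1_{[J,\ca{A}]}$ by uniqueness. Thus $A^!$ is a normal $\T$-algebra isomorphic to $A$, which completes the argument.

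I expect the main obstacle to be the third paragraph: one must correctly extract the defining equation of the comparison morphisms $\kappa$ — which is only implicit in the proof of \bref{thm:charns_talgs}, via the cotensor counits of \bref{par:forming_cot_in_jth} — and then check that the transported functor $A^!$ satisfies it with $\kappa_{A^!}$ an \emph{identity}. The remaining bookkeeping (that transport of structure along $(\kappa_A^J)_J$ yields a genuine $\V$-functor with $\kappa_A$ $\V$-natural, and the $J=I$ case forcing $\kappa_A^I=1$) is routine but should not be skipped, since the normality computation depends on $\kappa_A^I$ being the identity.
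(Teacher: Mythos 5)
Your proposal is correct and follows essentially the same route as the paper: both transport the structure of $A$ along the comparison isomorphisms $\kappa_A^J$ of \bref{thm:charns_talgs} to obtain a normal $\T$-algebra $A^! $ with $A^!J=[J,\ca{A}]$ and an isomorphism $A\cong A^!$ in $\Alg{\T}_\C$, which gives essential surjectivity of the full inclusion $\Alg{\T}^!_\C\hookrightarrow\Alg{\T}_\C$. The only difference is one of detail: the paper compresses the normality of $A^!$ into ``the result follows,'' whereas you spell out the verification that $\kappa_A^I=1_{\ca{A}}$ and hence $\kappa_{A^!}^J=1$, which is a correct and welcome elaboration rather than a different argument.
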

\begin{proof}
It suffices to associate to each $\T$-algebra $A:\T \rightarrow \C$ a normal $\T$-algebra $A^!$, which we shall call the \textbf{normalization} of $A$, and an isomorphism $\nu:A \rightarrow A^!$.  By \bref{thm:charns_talgs}, we have a family of isomorphisms
$$\kappa_A^J:AJ \rightarrow [J,\ca{A}]\;\;\;\;(J \in \ob\J = \ob\T)\;,$$
so there is a unique $\V$-functor $A^!$ on $\T$ given on objects by $A^!J = [J,\ca{A}]$ such that the $\kappa_A^J$ constitute a $\V$-natural isomorphism $A \Rightarrow A^!$, and the result follows.
\end{proof}

\begin{DefSub}\label{def:morph_th}
Given $\J$-theories $(\T,\tau)$ and $(\U,\upsilon)$, a \textbf{morphism of} $\J$-\textbf{theories} $A:\T \rightarrow \U$ is a $\V$-functor such that $A \circ \tau = \upsilon$.  We thus obtain a \textbf{category of} $\J$-\textbf{theories}, denoted by $\ThJ$.  Note that $\J^\op$ is an initial object of $\ThJ$.
\end{DefSub}

\begin{RemSub}\label{rem:morph_pres_cot}
By \bref{thm:charns_talgs}, a morphism of $\J$-theories $M:\T \rightarrow \U$ is the same as a normal $\T$-algebra in $\U$ with carrier $I$.  In particular, a morphism of $\J$-theories therefore preserves all $\J$-cotensors.
\end{RemSub}

\begin{ParSub}
Given a morphism of $\J$-theories $M:\T \rightarrow \U$ and a $\V$-category $\C$ for which the $\V$-categories of algebras $\Alg{\T}_\C$ and $\Alg{\U}_\C$ exist, there is a $\V$-functor 
$$M^*:\Alg{\U}_\C \rightarrow \Alg{\T}_\C$$
given on objects by $A \mapsto AM$ and defined in the obvious way on homs.  Since $M$ preserves the unit object $I$, $M^*$ commutes with the `carrier' functors $\ca{\text{$-$}} = \Ev_I$ to $\C$.
\end{ParSub}

\section{\texorpdfstring{$\J$}{J}-stable colimits and pointwise colimits}

An important basic ingredient of our study will be the consideration of certain special colimits of $\T$-algebras in a $\V$-category $\C$, namely the pointwise colimits, and their relation to certain special colimits in $\C$.  Assuming that $\Alg{\T}_\C$ exists, a weighted colimit in $\Alg{\T}_\C$ is said to be a \textit{pointwise colimit} if it is preserved by each of the evaluation $\V$-functors $\Ev_J:\Alg{\T}_\C \rightarrow \C$ $(J \in \ob\J)$, cf. \cite[\S 3.3]{Ke:Ba}.

\begin{DefSub}
Let $\J \hookrightarrow \uV$ be a system of arities.  Given a $\V$-category $\C$ with designated $\J$-cotensors, a weighted colimit in $\C$ is said to be $\J$-\textbf{stable} if it is preserved by each $\V$-functor $[J,-]:\C \rightarrow \C$ $(J \in \ob\J)$.  A weighted diagram $(W,D)$ in $\C$ is $\J$-\textbf{stable} if every colimit $W \star D$ that exists is necessarily $\J$-stable.  Given a $\V$-functor $G:\A \rightarrow \C$, we say that a weighted diagram $(W,D)$ in $\A$ is $G$-\textbf{relatively} $\J$-\textbf{stable} if the weighted diagram $(W,GD)$ is $\J$-stable.
\end{DefSub}

\begin{DefSub}
We say that a weight $W:\B^\op \rightarrow \uV$ is $\J$-\textbf{flat} if all $W$-weighted colimits in $\uV$ are $\J$-stable, i.e. commute with $\J$-cotensors.  By our convention \bref{par:det_cond_pres_refl_colims}, weighted colimits with $\J$-flat weights will be called $\J$-\textit{flat colimits}.  The notion of $\J$-flat weight is an instance of the notion of \textit{$\Phi$-flat} weight \cite{KeSch} in the case that $\Phi$ is the class of weights for $\J$-cotensors\footnote{Putting aside the fact that the cited article restricts attention to weights on small $\V$-categories.}.
\end{DefSub}

\begin{ExaSub}
When $\V$ is cartesian closed, it is well-known that the $\V$-functors $(-)^n:\uV \rightarrow \uV$ preserve (possibly large, conical) filtered colimits\footnote{This can be proved by a very slight variation on the argument given in \cite[3.8]{Ke:FL} for the case in which the filtered colimits in question are assumed small and $\V$ has small filtered colimits.} and (conical) reflexive coequalizers\footnote{This can be proved through an $n$-fold application of \cite[0.17]{Joh:TopTh}.}.  Hence filtered colimits and reflexive coequalizers in $\uV$ are $\NN_{\V}$-stable colimits for the system of arities $\NN_{\V} \rightarrowtail \uV$, $n \mapsto n \cdot 1$ \pbref{exa:fin_copowers_of_I} when the copowers \mbox{$n \cdot 1$} exist.  Weights for filtered colimits are therefore $\NN_{\V}$-flat.  Conical coequalizers of parallel pairs with a specified common section can be described equivalently as conical colimits of shape $\R$, where $\R$ is the category consisting of a single parallel pair of distinct arrows with a common section.  Hence conical $\R$-colimits are $\NN_{\V}$-flat.
\end{ExaSub}

\begin{ExaSub}
If $\V$ is locally finitely presentable as a closed category, then it follows from \cite[4.9]{Ke:FL} that all small (conical) filtered colimits in $\uV$ are $\J$-stable and $\J$-flat for the system of arities $j:\J = \Vfp \hookrightarrow \uV$ consisting of the finitely presentable objects \pbref{exa:lfp}.
\end{ExaSub}

Now let $\T$ be $\J$-theory, let $\C$ be a $\V$-category with standard designated $\J$-cotensors, and assume that the $\V$-category of $\T$-algebras $\Alg{\T}_\C$ exists.  In the following, we employ the notions of detection, reflection, conditional preservation, and creation of $\Phi$-colimits \pbref{par:weighted_limits} for a class $\Phi$ of weighted diagrams.

\begin{PropSub}\label{thm:carrier_vfunc_cr_g_rel_jstb_colims}
\emptybox
\begin{enumerate}
\item The $\V$-functor $G:\Alg{\T}_\C \rightarrow \C$ detects, reflects, and conditionally preserves $G$-relatively $\J$-stable colimits.
\item The restriction $G':\Alg{\T}^!_\C \rightarrow \C$ of $G$ creates $G'$-relatively $\J$-stable colimits.
\end{enumerate}
Moreover, the pointwise colimits in $\Alg{\T}_\C$ (resp. $\Alg{\T}_\C^!$) are precisely those $G$-relatively $\J$-stable colimits $W \star D$ for which $W \star GD$ exists.
\end{PropSub}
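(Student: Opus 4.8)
The plan is to prove part 2 by an explicit construction of the colimit, to deduce part 1 from part 2 together with the normalization equivalence of \bref{thm:equiv_alg_nalg}, and then to read off the characterization of pointwise colimits formally. For part 2, the key preliminary is that since $D$ takes values in normal $\T$-algebras and $\C$ carries standard designated $\J$-cotensors, each evaluation $\V$-functor $\Ev_J:\Alg{\T}^!_\C\to\C$ $(J\in\ob\J)$ is \emph{strictly} equal to $[J,-]\circ G'$ \pbref{rem:vnat_kappa_in_a}. So let $(W,D)$ be a weighted diagram in $\Alg{\T}^!_\C$ with $(W,G'D)$ $\J$-stable and let $(L,\lambda)$ be a colimit cylinder on $(W,G'D)$ in $\C$; by $\J$-stability each $[J,-]$ preserves this colimit, so $([J,L],\lambda^J)$ with $\lambda^J:=[J,-]\circ\lambda$ is a colimit cylinder on $(W,\Ev_J D)$ in $\C$.

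I then construct the candidate colimit $A:\T\to\C$ by putting $AJ:=[J,L]$ on objects and, on a hom-object $\T(J,K)$, transposing through the universal property of $[J,L]=W\star\Ev_J D$ the $\V$-natural family
$$WB\otimes\T(J,K)\xrightarrow{\ \lambda^K_B\otimes (DB)_{JK}\ }\C\bigl([K,\ca{DB}],[K,L]\bigr)\otimes\C\bigl([J,\ca{DB}],[K,\ca{DB}]\bigr)\xrightarrow{\ \circ\ }\C\bigl([J,\ca{DB}],[K,L]\bigr)$$
(here $(DB)_{JK}$ lands in $\C([J,\ca{DB}],[K,\ca{DB}])$ because $DB$ is normal). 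That $A$ is a $\V$-functor, that it is a normal $\T$-algebra — via \bref{thm:charns_talgs}, which reduces this to the strict equality $A\tau=[-,L]$ — and that the $\lambda^J_B$ assemble into a cylinder $\gamma:W\Rightarrow\Alg{\T}^!_\C(D-,A)$ with $G'\gamma=\lambda$, are then routine verifications governed by the universal properties of the $[J,L]$ and the $\V$-naturality of $\lambda$. Uniqueness of such an $(A,\gamma)$ over $(L,\lambda)$ is forced, since $G'A=L$ gives $AJ=[J,L]$, and the cylinder condition together with the colimit property of the $[J,L]$ then pins down $A$ on morphisms and $\gamma$.

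It remains to check that $(A,\gamma)$ is a colimit cylinder, i.e. that for each normal $\T$-algebra $P$ the canonical comparison $\Alg{\T}^!_\C(A,P)\to[\B^\op,\uV](W,\Alg{\T}^!_\C(D-,P))$ is invertible. Here I would write $\Alg{\T}^!_\C(A,P)=\Alg{\T}_\C(A,P)=\int_{J\in\T}\C([J,L],PJ)$ (an end existing since $\Alg{\T}_\C$ does), use the isomorphisms $\C([J,L],PJ)\cong[\B^\op,\uV](W,\C(\Ev_J D-,PJ))$ coming from the colimits $[J,L]=W\star\Ev_J D$, and interchange the end over $J\in\T$ with the one over $\B^\op$ to identify the result with $[\B^\op,\uV](W,\Alg{\T}^!_\C(D-,P))$, since $\Alg{\T}^!_\C(DB,P)=\int_{J\in\T}\C(DB(J),PJ)$. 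This Fubini step is the part I expect to demand the most care: one must track $\V$-dinaturality in $J\in\T$ as the colimits $[J,L]$ vary, and must check that the relevant objects of $\V$-natural transformations exist — which they do here, being identified with hom-objects of $\Alg{\T}_\C$ that exist by hypothesis — since the shape $\B$ is allowed to be large.

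Part 1 follows by transport. The normalization $\V$-functor $N:\Alg{\T}_\C\xrightarrow{\ \sim\ }\Alg{\T}^!_\C$ of \bref{thm:equiv_alg_nalg} is an equivalence with $G'\circ N\cong G$, and for any weighted diagram $(W,D)$ in $\Alg{\T}_\C$ one has $GD\cong G'(ND)$, so $(W,D)$ is $G$-relatively $\J$-stable iff $(W,ND)$ is $G'$-relatively $\J$-stable. An equivalence detects, reflects and conditionally preserves all colimits, and $G'$ creates — hence \pbref{par:det_cond_pres_refl_colims} detects, reflects and conditionally preserves — $G'$-relatively $\J$-stable colimits; as these three properties are closed under composition and invariant under isomorphism of $\V$-functors, $G\cong G'\circ N$ detects, reflects and conditionally preserves $G$-relatively $\J$-stable colimits. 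Finally, for the pointwise-colimit assertion: if $(W,D)$ is $G$-relatively $\J$-stable and $W\star GD$ exists, then $W\star D$ exists (detection), $G$ preserves it (conditional preservation), and since $\Ev_J\cong[J,-]\circ G$ \pbref{rem:vnat_kappa_in_a} with $[J,-]$ preserving $W\star GD$, every $\Ev_J$ preserves $W\star D$, so the colimit is pointwise; conversely, if $W\star D$ exists and is pointwise then $G=\Ev_I$ preserves it, so $W\star GD$ exists, and via $\Ev_J\cong[J,-]\circ G$ the comparison $[J,-](W\star GD)\to W\star([J,-]GD)$ is identified with the invertible comparison coming from the pointwise colimit $W\star D$, whence $(W,GD)$ is $\J$-stable. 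The same argument with $G'$ and $\Alg{\T}^!_\C$, using creation in place of detection, gives the parenthetical statement.
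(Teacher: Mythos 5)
Your proposal is correct and follows essentially the same route as the paper: prove part 2 by building the colimit algebra pointwise as $J \mapsto [J, W\star G'D]$ using $\J$-stability and the strict identities $\Ev_J = [J,-]\circ G'$ on normal algebras, deduce part 1 by transport along the normalization equivalence of \bref{thm:equiv_alg_nalg}, and extract the pointwise-colimit characterization from $\Ev_J \cong [J,-]\circ G$. The only difference is one of detail: where you construct $A$ on hom-objects explicitly and verify the colimit property by an end/Fubini computation, the paper invokes the standard fact that a family of colimits with $\V$-natural cylinders assembles into a pointwise colimit in the functor $\V$-category (\cite[\S 3.3]{Ke:Ba}).
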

\begin{proof}
It suffices to show that 2 holds, for then 1 follows by \bref{thm:equiv_alg_nalg}  and the remaining claim follows by \bref{rem:vnat_kappa_in_a}.  Let us write simply $G$ for $G'$.  Let $(W,D)$ be a $G$-relatively $\J$-stable weighted diagram of shape $\B$ in $\Alg{\T}^!_\C$, and let $W \star GD$ be a colimit of $(W,GD)$ in $\C$, with colimit cylinder $\lambda$.  Then, for each $J \in \ob\J$, this colimit is preserved by the $\V$-functor $[J,-]:\C \rightarrow \C$, so $[J,W \star GD]$ is a colimit 
$$[J,W \star GD] = W \star [J,GD-]$$
with cylinder $[J,-] \circ \lambda$.  But by \bref{rem:vnat_kappa_in_a} we know that for fixed $J$, $[J,GA] = [J,AI] = AJ$ $\V$-naturally in $A \in \Alg{\T}^!$, so in particular $[J,GD-] = [J,(D-)I] = (D-)J:\B \rightarrow \C$ and hence $[J,W \star GD] = [J,W \star (D-)I]$ is a colimit
\begin{equation}\label{eq:col}[J,W \star (D-)I] = W \star (D-)J\;.\end{equation}
Since this is so for every object $J$ of $\T$, there is a unique $\V$-functor $W \star D:\T \rightarrow \C$ given on objects by $J \mapsto W \star (D-)J = [J,W \star (D-)I]$ such that the family consisting of the cylinders $[J,-] \circ \lambda\::\:W \Rightarrow \C((W \star D)J,(D-)J)$ is $\V$-natural in $J \in \T$.  In order to show that $W \star D$ is a normal $\T$-algebra, it suffices to show that the diagram \eqref{eq:triangle_char_talgs} commutes with $A = W \star D$, and to show this we have but to compute that
$$
\begin{array}{ccccccc}
(W \star D)([J,I]) & = & W \star ((D-)[J,I]) & = & W \star [J,(D-)I] & &\\
& = & W \star (D-)J & = & [J,W \star (D-)I] & = & [J,(W \star D)I]
\end{array}
$$
$\V$-naturally in $J \in \J^\op$, using \eqref{eq:col} and the fact that $D$ is a diagram of normal $\T$-algebras.

Hence, $W \star D$ is a colimit of $(W,D)$ in $\Alg{\T}^!_\C$, and it is straightforward to show that the associated cylinder $(W \star D,\lambda')$ is the unique cylinder $(A,\gamma)$ on $(W,D)$ such that $GA = W \star GD$ and $G \circ \gamma = \lambda$.
\end{proof}

\begin{ParSub}\label{par:jflat_and_gabs_colims}
Given a $\V$-functor $G:\A \rightarrow \C$, we say that a weighted diagram $(W,D)$ in $\A$ is $G$-\textbf{absolute} if every colimit $W \star GD$ that exists is absolute.  Note that when $\C$ has designated $\J$-cotensors, $G$-absolute weighted diagrams are necessarily $G$-relatively $\J$-stable.  Observe also that in the case where $\C = \uV$, any weighted diagram $(W,D)$ in $\A$ with a $\J$-flat weight $W$ is necessarily $G$-relatively $\J$-stable.  Hence the preceding Proposition entails the following:
\end{ParSub}

\begin{CorSub}\label{thm:g_cr_jflat_and_gabs_colims}
The $\V$-functor $G:\Alg{\T} \rightarrow \uV$ detects, reflects, and conditionally preserves $\J$-flat colimits and $G$-absolute colimits.  The $\V$-functor $G':\Alg{\T}^! \rightarrow \uV$ creates $\J$-flat colimits and $G$-absolute colimits.
\end{CorSub}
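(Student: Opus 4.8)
The plan is to obtain this directly from Proposition~\ref{thm:carrier_vfunc_cr_g_rel_jstb_colims} by taking $\C = \uV$, equipped with its standard designated $\J$-cotensors \pbref{rem:des_cot_v}, and by recognizing both $\J$-flat colimits and $G$-absolute colimits as special cases of $G$-relatively $\J$-stable colimits. First I would specialize the standing hypotheses to $\C = \uV$, so that $\Alg{\T} = \Alg{\T}_{\uV}$ is (assumed to be) the $\V$-category of $\T$-algebras in question; Proposition~\ref{thm:carrier_vfunc_cr_g_rel_jstb_colims} then gives that $G : \Alg{\T} \rightarrow \uV$ detects, reflects, and conditionally preserves $G$-relatively $\J$-stable colimits, that $G' : \Alg{\T}^! \rightarrow \uV$ creates them, and that the pointwise colimits are as described there.

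Next I would verify the two inclusions of classes of weighted diagrams recorded in \pbref{par:jflat_and_gabs_colims}. If $(W,D)$ is a $G$-absolute weighted diagram in $\Alg{\T}$, then any colimit $W \star GD$ that exists in $\uV$ is absolute, hence is preserved by every $\V$-functor $\uV \rightarrow \uV$ and in particular by each $[J,-]$ with $J \in \ob\J$; so $(W,GD)$ is $\J$-stable, i.e. $(W,D)$ is $G$-relatively $\J$-stable. If instead the weight $W$ is $\J$-flat, then by definition every $W$-weighted colimit in $\uV$ commutes with $\J$-cotensors, so any colimit $W \star GD$ that exists is $\J$-stable, and again $(W,D)$ is $G$-relatively $\J$-stable. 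Thus the class of weighted diagrams in $\Alg{\T}$ with $\J$-flat weight and the class of $G$-absolute weighted diagrams in $\Alg{\T}$ are both contained in the class of $G$-relatively $\J$-stable weighted diagrams, whence detection, reflection, conditional preservation, and creation for these narrower classes follow at once from the corresponding statements for the wider class.

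I do not anticipate any genuine obstacle: the real content is entirely in Proposition~\ref{thm:carrier_vfunc_cr_g_rel_jstb_colims} and in the elementary observations of \pbref{par:jflat_and_gabs_colims}. The only point requiring mild care is the direction of the implications in the notions of \emph{detects} and \emph{conditionally preserves} colimits --- the colimit $W \star GD$ is assumed to exist downstairs in $\uV$ --- but this is precisely the hypothesis under which both $G$-absoluteness and $\J$-flatness of $W$ supply the $\J$-stability that is needed.
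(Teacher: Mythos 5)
Your proposal is correct and is exactly the argument the paper intends: the corollary is deduced from Proposition \bref{thm:carrier_vfunc_cr_g_rel_jstb_colims} with $\C = \uV$ via the two observations recorded in \bref{par:jflat_and_gabs_colims}, namely that $G$-absolute diagrams and diagrams with $\J$-flat weights are each $G$-relatively $\J$-stable. Your explicit verification of those two inclusions, including the care about where the colimit $W \star GD$ is assumed to exist, matches the paper's reasoning precisely.
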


\section{Eleutheric systems of arities and free cocompletion for a class}

In order to enable the construction of free algebras and a correspondence between $\J$-theories and monads in certain bicategories, we will need to impose a certain axiom of `exactness' on our system of arities $\J \hookrightarrow \uV$, as follows.

\begin{DefSub}\label{def:eleutheric}
A system of arities $j:\J \hookrightarrow \uV$ is said to be \textbf{eleutheric} if $\uV$ has weighted colimits for the weights
$$\y_j(V) = \uV(j-,V):\J^\op \rightarrow \uV\;\;\;\;\;\;\;\;\;(V \in \ob\V)$$
and these weights are $\J$-flat.  Let $\PhiJ$ denote the class consisting of all of the above weights $\y_j(V)$.
\end{DefSub}

\begin{ParSub}[\textbf{Basic characterization}]
By definition, a system of arities $j:\J \hookrightarrow \uV$ is eleutheric iff the following conditions hold:
\begin{enumerate}
\item For each object $V$ of $\V$ and each $\V$-functor $T:\J \rightarrow \uV$ there is a weighted colimit
$$\uV(j-,V) \star T = \int^{\JinJ}\uV(J,V)\otimes TJ$$
in $\uV$, and
\item for each object $K$ of $\J$, the canonical morphism
$$\int^{\JinJ} \uV(J,V)\otimes\uV(K,TJ) \longrightarrow \uV\left(K,\int^{\JinJ}\uV(J,V)\otimes TJ\right)$$
is an isomorphism.
\end{enumerate}
\end{ParSub}

\begin{ParSub}[\textbf{Characterization via Kan extensions}]\label{par:charn_ext_via_kan_extns}
Observe that a system of arities $j:\J \hookrightarrow \uV$ is eleutheric iff the following conditions hold:
\begin{enumerate}
\item For every $\V$-functor $T:\J \rightarrow \uV$, the (pointwise) left Kan extension $\Lan_jT:\uV \rightarrow \uV$ of $T$ along $j$ exists, and
\item this left Kan extension is preserved by each $\V$-functor $\uV(K,-):\uV \rightarrow \uV$ with $K \in \ob\J$.
\end{enumerate}
The notion of preservation of Kan extensions is defined in \cite[\S 4.1]{Ke:Ba}.
\end{ParSub}

\begin{ParSub}[\textbf{Characterization via $\Phi$-atomic objects}]\label{par:charn_ext_sys_phiatomic_objs}  A system of arities $j:\J \hookrightarrow \uV$ is eleutheric iff 
\begin{enumerate}
\item $\uV$ is $\PhiJ$-cocomplete, and
\item every object of $\J$ is a $\PhiJ$-atomic object of $\uV$
\end{enumerate}
in the terminology of \cite{KeSch}, putting aside the fact that the latter article defines these terms only for a class of weights with small domains, whereas $\J^\op$ need not be small.  An object $A$ of a $\V$-category $\A$ is said to be $\Phi$-\textbf{atomic} for a class of weights $\Phi$ if $\A(A,-):\A \rightarrow \uV$ preserves $\Phi$-colimits.
\end{ParSub}

\newpage

\begin{ExaSub}\label{exa:ext_sys_ar}
\emptybox
\begin{description}[leftmargin=3.4ex]
\item[\textbf{1. Finitely presentable objects.}] If $\V$ is locally finitely presentable as a closed category, then the system of arities $j:\J = \Vfp \hookrightarrow \uV$ of \bref{exa:lfp} is eleutheric, as we now show.  By \cite[7.2]{Ke:FL}, $\Vfp$ is essentially small and has $\Phi$-colimits for the class $\Phi$ of finite weights, and the inclusion $j$ preserves $\Phi$-colimits.  Hence the weight $\y_j(V) = \uV(j-,V):\J^\op \rightarrow \uV$ preserves $\Phi$-limits, so by \cite[6.12]{Ke:FL}, the $\V$-functor $\y_j(V) \star (-):[\J,\uV] \rightarrow \uV$ preserves $\Phi$-limits since it is a left Kan extension of $\y_j(V)$ along the Yoneda embedding $\J^\op \rightarrow [\J,\uV]$.  In particular, $\y_j(V) \star (-)$ preserves $\Vfp$-cotensors.
\item[\textbf{2. Finite cardinals.}] The system of arities $\FinCard \hookrightarrow \Set$ is eleutheric.  Indeed, as a special case of $1$, the system of arities $\FinSet \hookrightarrow \Set$ is eleutheric, where $\FinSet$ is the category of finite sets, and since the inclusion $\FinCard \hookrightarrow \FinSet$ is an equivalence, the result follows.
\item[\textbf{3. Arbitrary arities.}]  The system of arities with $\J = \uV$ and $j = 1_{\uV}$ is eleutheric, despite the fact that $\J$ may be large.  Indeed, each $T:\uV \rightarrow \uV$ has a left Kan extension along $1_{\uV}$, namely $T$ itself, and this Kan extension is preserved by every $\V$-functor on $\uV$.
\item[\textbf{4. Just the unit object $I$.}] The system of arities $\{I\} \hookrightarrow \uV$ is always eleutheric, as we now show.  Recall that this system is isomorphic to the system $j:\text{I} \rightarrowtail \uV$ with $* \mapsto I$.  Arbitrary $\V$-functors $\text{I} \rightarrow \uV$ may be identified with single objects $V$ of $\V$, and the canonical morphism $r^{-1}_V:V \rightarrow V \otimes I$ exhibits $V \otimes (-):\uV \rightarrow \uV$ as a left Kan extension of $V:\text{I} \rightarrow \uV$ along $j$.  Since $\uV(I,-) \cong 1_{\uV}:\uV \rightarrow \uV$ preserves all weighted limits, the claim is proved.
\item[\textbf{5. Finite copowers of the unit object.}]  The system of arities $j:\NN_{\V} \rightarrowtail \uV$ of \bref{exa:fin_copowers_of_I} is eleutheric for a wide class of closed categories $\V$, as follows.  For an arbitrary symmetric monoidal closed category $\V$, Borceux and Day study in \cite{BoDay:PPKE} those $\V$-categories $\C$ having the following property:  \textit{For any $\V$-functors $P:\A \rightarrow \C$ and $M:\A \rightarrow \B$ such that $\A$ and $\B$ have (conical) finite products and $\A$ is small, if $P$ preserves finite products then the left Kan extension $\Lan_M P$ of $P$ along $M$ exists and preserves finite products.}  In the terminology of the latter article, a cocomplete $\V$-category $\C$ with finite products is said to be \textit{$\pi(\V)$} if $\C$ has this property.  If $\V$ is complete and cocomplete and $\uV$ itself is $\pi(\V)$, as is the case when $\V$ is a $\pi$-\textit{category} in the sense of \cite{BoDay}, then the system of arities $j:\NN_{\V} \rightarrowtail \uV$ is eleutheric, since for each object $V$ of $\V$ we can take $\A = \NN_{\V}^\op$, $P = \y_j(V) = \uV(j-,V)$, $\B = [\NN_{\V},\uV]$, and then since $P$ preserves finite products and $P \star (-):[\NN_{\V},\uV] \rightarrow \uV$ is a left Kan extension of $P$ along the Yoneda embedding $\NN_{\V}^\op \rightarrow [\NN_{\V},\uV]$ it follows that $P \star (-)$ preserves finite products and hence preserves $\NN_{\V}$-cotensors, showing that $j$ is eleutheric.  In particular, if $\V$ is cartesian closed, complete, and cocomplete, then $\uV$ is $\pi(\V)$ by \cite[3.2]{BoDay:PPKE} and hence $j:\NN_{\V} \rightarrowtail \uV$ is eleutheric.  More generally, if $\V$ is cartesian closed and countably cocomplete, then the same methods as in \cite{BoDay:PPKE} allow us to deduce that $P$-weighted colimits commute with finite products in $\uV$ whenever $P:\A = (\A^\op)^\op \rightarrow \uV$ is a product-preserving $\V$-functor on a $\V$-category $\A$ with finite products and a countable set of objects.  Therefore 
\begin{quote}\textit{the system of arities $j:\NN_{\V} \rightarrowtail \uV$ is eleutheric for any countably cocomplete cartesian closed category $\V$.}\end{quote}
\end{description}
\end{ExaSub}

\begin{ParSub}
With reference to \cite[3.7]{KeSch} and \cite[5.35]{Ke:Ba}, we say that a $\V$-functor $F:\A \rightarrow \B$ presents $\B$ as a \textbf{free $\Phi$-cocompletion} of $\A$ if $\B$ is $\Phi$-cocomplete and for every $\Phi$-cocomplete $\V$-category $\C$, composition with $F$ determines an equivalence of categories $\Cocts{\Phi}(\B,\C) \rightarrow \VCAT(\A,\C)$, where $\Cocts{\Phi}(\B,\C)$ denotes the full subcategory of $\VCAT(\B,\C)$ consisting of $\Phi$-cocontinuous $\V$-functors.
\end{ParSub}

\begin{ParSub}\label{par:weights_for_left_kan_extns_along_j}
Observe that a $\V$-category $\C$ is $\PhiJ$-cocomplete if and only if left Kan extensions of $\V$-functors $\J \rightarrow \C$ along $j:\J \hookrightarrow \uV$ exist.  Hence we call the weights $\y_j(V) = \uV(j-,V)$ in $\PhiJ$ the \textit{weights for left Kan extensions along} $j$, and we therefore use the term \textbf{free cocompletion under left Kan extensions along} $j$ for the notion of free $\PhiJ$-cocompletion.
\end{ParSub}

\begin{ThmSub}[\textbf{Characterization via free $\Phi$-cocompletion}]\label{thm:eleutheric}
A system of arities $j:\J \hookrightarrow \uV$ is eleutheric if and only if $j$ presents $\uV$ as a free cocompletion of $\J$ under left Kan extensions along $j$, i.e. a free $\PhiJ$-cocompletion.
\end{ThmSub}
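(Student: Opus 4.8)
The plan is to identify the inverse of precomposition with $j$ as the left Kan extension operation $\Lan_j(-)$, and then to extract the $\PhiJ$-atomicity of the arities for the ``only if'' direction. I will use throughout one consequence of the density of $j$ \pbref{thm:sys_ar_dense}: since each $V\in\uV$ is the $\PhiJ$-colimit $V=\y_j(V)\star j$, any $\PhiJ$-cocontinuous $\V$-functor $F:\uV\to\C$ preserves it, so $FV\cong\y_j(V)\star(F\circ j)$ $\V$-naturally and hence $F$ is isomorphic to the pointwise left Kan extension $\Lan_j(F\circ j)$ of its restriction along $j$. Conversely, whenever $\C$ is $\PhiJ$-cocomplete, $\Lan_j S$ exists for every $S:\J\to\C$ (being given pointwise by $(\Lan_j S)(V)=\y_j(V)\star S$), and $\Lan_j S\circ j\cong S$ by the enriched Yoneda lemma since $j$ is fully faithful.

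For the ``if'' direction, suppose $j$ is eleutheric, so that $\uV$ is $\PhiJ$-cocomplete and, by \pbref{par:charn_ext_sys_phiatomic_objs}, every object $K$ of $\J$ is $\PhiJ$-atomic in $\uV$. Fix a $\PhiJ$-cocomplete $\C$. In view of the observations above, to conclude that $(-)\circ j:\Cocts{\PhiJ}(\uV,\C)\to\VCAT(\J,\C)$ is an equivalence with quasi-inverse $\Lan_j(-)$, the one remaining point is that $\Lan_j S$ is $\PhiJ$-cocontinuous for each $S:\J\to\C$ --- this is the crux. Given a $\PhiJ$-colimit $\y_j(V)\star D$ in $\uV$, I would show the canonical comparison $\y_j(V)\star(\Lan_j S\circ D)\to\Lan_j S(\y_j(V)\star D)$ is invertible by a Yoneda argument: for $X\in\C$, write $\C(\Lan_j S(\y_j(V)\star D),X)$ as the end $\int_{J'\in\J}\uV(\uV(jJ',\y_j(V)\star D),\C(SJ',X))$; use $\PhiJ$-atomicity of $jJ'$ to replace $\uV(jJ',\y_j(V)\star D)$ by the colimit $\int^{K}\uV(K,V)\otimes\uV(jJ',DK)$; then apply the closedness of $\uV$ together with the Fubini theorem for ends to reorganize the whole expression as $\int_{K}\uV(\uV(K,V),\C(\Lan_j S(DK),X))=\C(\y_j(V)\star(\Lan_j S\circ D),X)$. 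Having established the equivalence for all $\PhiJ$-cocomplete $\C$, we conclude that $j$ presents $\uV$ as a free $\PhiJ$-cocompletion.

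For the ``only if'' direction, suppose $j$ presents $\uV$ as a free $\PhiJ$-cocompletion. Then $\uV$ is $\PhiJ$-cocomplete, and applying the defining equivalence with $\C=\uV$ together with the density observation shows that $\Lan_j T$ is $\PhiJ$-cocontinuous for every $\V$-functor $T:\J\to\uV$: a quasi-inverse of $(-)\circ j$ sends $T$ to a $\PhiJ$-cocontinuous $\V$-functor extending $T$ along $j$, which must be $\Lan_j T$ by density. Now fix $K\in\ob\J$ and take $T=\J(K,-):\J\to\uV$, which is the restriction of $\uV(K,-)$ along $j$ since $j$ is fully faithful. The enriched co-Yoneda lemma gives, for any weight $W:\J^\op\to\uV$ with $W\star\J(K,-)$ existing, an isomorphism $W\star\J(K,-)=\int^{J}W(J)\otimes\J(K,J)\cong WK$; with $W=\y_j(X)$ this reads $\Lan_j(\J(K,-))(X)\cong\uV(K,X)$, $\V$-naturally in $X$, and one checks this is the canonical Kan-extension comparison. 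Hence $\uV(K,-)\cong\Lan_j(\J(K,-))$ is $\PhiJ$-cocontinuous, i.e. $K$ is $\PhiJ$-atomic in $\uV$; since this holds for every $K$ and $\uV$ is $\PhiJ$-cocomplete, \pbref{par:charn_ext_sys_phiatomic_objs} shows $j$ is eleutheric.

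The main obstacle is the crux of the ``if'' direction, the $\PhiJ$-cocontinuity of $\Lan_j S$: this is an interchange of two weighted colimits, and the delicate issue is bookkeeping which colimits and ends may legitimately be formed, given that $\V$ is assumed neither complete nor cocomplete and $\J^\op$ need not be small --- so one must work only with the colimits guaranteed by $\PhiJ$-cocompleteness of $\uV$ and of $\C$, rather than forming the functor $\V$-category $[\J^\op,\uV]$. Alternatively this bookkeeping can be avoided by passing to an enlargement $\V'$ of $\V$ \pbref{par:enl} in which $\J$ is small, carrying out the interchange there, and descending along $\V\hookrightarrow\V'$.
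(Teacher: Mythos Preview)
Your argument is essentially correct, but note that your ``if'' and ``only if'' labels are swapped relative to the statement (``eleutheric if and only if $j$ presents $\uV$ as a free $\PhiJ$-cocompletion''): the paragraph beginning ``suppose $j$ is eleutheric'' is the \emph{only if} direction, and vice versa.

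On the substantive side, your treatment of the direction \emph{free cocompletion $\Rightarrow$ eleutheric} matches the paper's closely: both use that the quasi-inverse of $(-)\circ j$ must be $\Lan_j$ (by density) and then specialize to $T=\J(K,-)$ to obtain $\uV(K,-)\cong\Lan_j(\J(K,-))$, hence $\PhiJ$-cocontinuous. The paper phrases this slightly differently, observing that both $\Cocts{\PhiJ}(\uV,\uV)$ and the class $\sL$ of left Kan extensions along $j$ are full replete subcategories on which $(-)\circ j$ restricts to an equivalence, with $\Cocts{\PhiJ}\subseteq\sL$ always holding, so they coincide; but this is the same content.

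Where your route genuinely differs is in the crux of the other direction, the $\PhiJ$-cocontinuity of $\Lan_j S$ for $S:\J\to\C$. You propose a direct end/coend interchange, using $\PhiJ$-atomicity of each $jJ'$ to expand $\uV(jJ',\y_j(V)\star D)$ and then Fubini. This works, but as you note the bookkeeping is delicate since $\J$ need not be small and $\V$ need not be complete. The paper instead isolates this step as Proposition \bref{thm:jary_charns} and proves it by a factorization: since $j$ is dense, the weights $\y_j(V)$ assemble into a $\V$-category $\PhiJ$ with $\PhiJ(\y_j(V),\y_j(W))=\uV(V,W)$, and $\Lan_j S$ factors as $\uV\xrightarrow{\y_j}\PhiJ\xrightarrow{(-)\star S}\C$. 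Eleutheric says exactly that $\y_j$ sends $\PhiJ$-colimits to pointwise colimits in $\PhiJ$, and $(-)\star S$ sends pointwise colimits to colimits by \cite[(3.23)]{Ke:Ba}. This sidesteps the Fubini bookkeeping entirely and makes the role of the eleutheric hypothesis transparent. Your enlargement alternative would also work, but the paper's factorization is the cleaner device here.
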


In order to prove this theorem, let us first note that if $\C$ is an arbitrary $\PhiJ$-cocomplete $\V$-category, then in view of \bref{par:weights_for_left_kan_extns_along_j} we have an adjunction
\begin{equation}\label{eq:lanj_adj}\Adjn{\VCAT(\J,\C)}{\Lan_j}{\VCAT(j,\C)}{}{}{\VCAT(\uV,\C)}\end{equation}
whose unit is an isomorphism since $j$ is fully faithful, so that $\Lan_j$ is fully faithful.  The key to proving \bref{thm:eleutheric} now lies in the following result:

\begin{PropSub}\label{thm:jary_charns}
Let $T:\uV \rightarrow \C$ be a $\V$-functor valued in a $\PhiJ$-cocomplete $\V$-category $\C$.  If $j:\J \hookrightarrow \uV$ is eleutheric, then the following are equivalent:
\begin{enumerate}
\item $T$ is $\PhiJ$-cocontinuous.
\item $T$ is a left Kan extension along $j:\J \hookrightarrow \uV$.
\item The component $\xi_T \colon \Lan_j(T \circ j) \Rightarrow T$ at $T$ of the counit $\xi$ of the adjunction \eqref{eq:lanj_adj} is an isomorphism.
\item $T$ preserves the weighted colimits $V = \y_j(V) \star j$ $(V \in \ob\V)$
that exhibit $j$ as a dense $\V$-functor \pbref{thm:sys_ar_dense}.
\end{enumerate}
Without any assumption on $j$, we always have that $2$, $3$, and $4$ are equivalent and are entailed by $1$.
\end{PropSub}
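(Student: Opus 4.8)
The plan is to prove the ``general'' equivalences $(2)\Leftrightarrow(3)\Leftrightarrow(4)$ together with $(1)\Rightarrow(4)$ first, using no hypothesis on $j$, and only afterwards to invoke eleuthericity for the single remaining implication $(4)\Rightarrow(1)$. The starting point is that the unit of the adjunction \eqref{eq:lanj_adj} is invertible, since $j$ is fully faithful; hence $\Lan_j\colon\VCAT(\J,\C)\to\VCAT(\uV,\C)$ is fully faithful, and so its essential image consists of exactly those $T$ for which the counit component $\xi_T\colon\Lan_j(T\circ j)\Rightarrow T$ is invertible. Since $\C$ is $\PhiJ$-cocomplete, the pointwise left Kan extension of any $\V$-functor $\J\to\C$ along $j$ exists, and, $j$ being fully faithful, it restricts back along $j$ to the functor it extends; consequently ``$T$ is a left Kan extension along $j$'' is the same as ``$T$ lies in the essential image of $\Lan_j$'', which gives $(2)\Leftrightarrow(3)$.

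For $(3)\Leftrightarrow(4)$, the key is to identify, for each $V\in\ob\V$, the component $\xi_{T,V}$ with the canonical comparison morphism measuring preservation by $T$ of the density colimit $V=\y_j(V)\star j$ of \bref{thm:sys_ar_dense}. Indeed the pointwise formula gives $\Lan_j(T\circ j)(V)=\y_j(V)\star(T\circ j)$, and, since the cylinder exhibiting $V=\y_j(V)\star j$ is the identity transformation on $\y_j(V)=\uV(j-,V)$, a short unwinding of the universal properties shows that $\xi_{T,V}$ is precisely the comparison $\y_j(V)\star(T\circ j)\to TV$ obtained by applying $T$ to that cylinder. Hence $\xi_T$ is invertible iff every such comparison is, i.e. iff $T$ preserves the density colimits, which is $(4)$. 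Finally $(1)\Rightarrow(4)$ is immediate, since by \bref{thm:sys_ar_dense} the colimit $\y_j(V)\star j$ exists in $\uV$ and is a $\PhiJ$-colimit (its weight lies in $\PhiJ$), so a $\PhiJ$-cocontinuous $T$ preserves it.

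It remains to prove $(4)\Rightarrow(1)$ assuming $j$ eleutheric; this is the heart of the matter. Using $(4)\Leftrightarrow(2)$ we may write $T\cong\Lan_j S$ with $S=T\circ j\colon\J\to\C$, so it is enough to show $\Lan_j S$ is $\PhiJ$-cocontinuous. Eleuthericity makes $\uV$ $\PhiJ$-cocomplete, and a generic $\PhiJ$-colimit in $\uV$ has the form $\y_j(V)\star D$ with $D\colon\J\to\uV$; I will show $(\Lan_j S)(\y_j(V)\star D)\cong\y_j(V)\star((\Lan_j S)\circ D)$ via the canonical comparison by computing the representable $\C\bigl((\Lan_j S)(\y_j(V)\star D),C\bigr)$. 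By the defining property of the pointwise left Kan extension this is $\int_{J\in\J}\uV\bigl(\uV(J,\y_j(V)\star D),\C(SJ,C)\bigr)$; by $\J$-flatness of the weight $\y_j(V)$, which is part of the definition of eleutheric \pbref{def:eleutheric}, we have $\uV(J,\y_j(V)\star D)\cong\int^{K}\uV(K,V)\otimes\uV(J,DK)$ $\V$-naturally in $J$. Substituting this, using that $\uV(-,-)$ is continuous in each variable together with the tensor--hom adjunction, and interchanging the two ends by Fubini, the expression is identified with $\int_{K}\uV\bigl(\uV(K,V),\int_{J}\uV(\uV(J,DK),\C(SJ,C))\bigr)$; here the inner end $\int_{J}\uV(\uV(J,DK),\C(SJ,C))$ is $\C((\Lan_j S)(DK),C)$ by the pointwise Kan-extension formula applied at $DK$, so the whole becomes $\C\bigl(\y_j(V)\star((\Lan_j S)\circ D),C\bigr)$ by the definition of weighted colimit (the colimit existing since $\C$ is $\PhiJ$-cocomplete). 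A Yoneda argument in $\C$ now yields the isomorphism, and tracing the identifications confirms it is the canonical comparison; hence $\Lan_j S$, and therefore $T$, is $\PhiJ$-cocontinuous.

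The main obstacle is not any single step in isolation but the coherence bookkeeping running through the whole argument: checking in $(3)\Leftrightarrow(4)$ that $\xi_{T,V}$ really is the preservation comparison, and checking in $(4)\Rightarrow(1)$ that the composite of the flatness, continuity, tensor--hom, and Fubini isomorphisms is the canonical comparison cylinder for $\y_j(V)\star D$, as opposed to merely an abstract isomorphism of objects. The safe way to handle both is to phrase everything through the universal properties of the weighted (co)limits and ends involved, invoking the (co)Yoneda lemma, rather than manipulating explicit cylinders or chasing large diagrams.
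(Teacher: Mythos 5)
Your proposal is correct, and for the easy parts ($2\Leftrightarrow 3$ via full faithfulness of $\Lan_j$, $3\Leftrightarrow 4$ by identifying $(\xi_T)_V$ with the preservation comparison for the density colimit $V=\y_j(V)\star j$, and $1\Rightarrow 4$) it coincides with the paper's argument. Where you genuinely diverge is the key implication: showing that $\Lan_j S$ is $\PhiJ$-cocontinuous when $j$ is eleutheric. The paper proceeds structurally: it assembles the weights $\y_j(V)$ into a $\V$-category $\PhiJ$ (a surrogate for a full subcategory of the possibly nonexistent functor category $[\J^\op,\uV]$), factors $\Lan_j S$ as $((-)\star S)\circ \y_j$, uses eleuthericity to show $\y_j$ sends $\PhiJ$-colimits to pointwise colimits, and then cites Kelly's result that $(-)\star S$ turns pointwise colimits into colimits. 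You instead compute the representable $\C\bigl((\Lan_j S)(\y_j(V)\star D),C\bigr)$ directly by end/coend calculus, using $\J$-flatness, the tensor--hom adjunction, continuity of the hom, and Fubini. Both routes are sound. Your computation buys self-containedness (no auxiliary $\V$-category of weights, no appeal to Kelly's (3.23)), but it shifts the burden onto exactly the points you flag: since $\V$ is not assumed complete, each end in the chain must be seen to exist (each does, being either a hom-object of a $\PhiJ$-colimit guaranteed by $\PhiJ$-cocompleteness of $\C$ or $\uV$, or an instance of Fubini whose inner ends exist for each fixed outer variable), and the resulting isomorphism must be checked to be the canonical comparison cylinder rather than a bare isomorphism of objects. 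The paper's factorization packages precisely these verifications into the construction of $\PhiJ$ and the cited general result, which is why it reads more cleanly; your version is a legitimate, more elementary unwinding of the same content.
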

\begin{proof}
The equivalence of 2 and 3 is immediate from the preceding remark.  Statement 4 holds iff the comparison morphism $\y_j(V) \star (T \circ j) \rightarrow T(\y_j(V) \star j) = TV$ is an isomorphism for all $V \in \ob\V$, but this comparison morphism is readily seen to be exactly the component $(\xi_T)_V$ of $\xi_T$ at $V$, so $4 \Leftrightarrow 3$.  Moreover, 1 clearly implies 4.

Now assuming that $j$ is eleutheric, it suffices to show that for any $\V$-functor $P:\J \rightarrow \C$, the left Kan extension $\Lan_j P:\uV \rightarrow \C$ is $\PhiJ$-cocontinuous.  Observe that for each pair of objects $V,W$ of $\V$, the morphisms $\uV(J,-)_{VW}:\uV(V,W) \rightarrow \uV(\uV(J,V),\uV(J,W))$ with $J \in \ob\J$ present $\uV(V,W)$ as an object of $\V$-natural transformations $[\J^\op,\uV](\y_j(V),\y_j(W))$, since $j$ is dense \pbref{thm:sys_ar_dense}.  Hence the $\V$-functors $\y_j(V):\J^\op \rightarrow \uV$ are the objects of a $\V$-category $\PhiJ$ that is equipped with a fully faithful $\V$-functor $\y_j:\uV \rightarrow \PhiJ$ given on objects by $V \mapsto \y_j(V)$.  The idea is that $\uV$ is thus equivalent to the full sub-$\V$-category $\PhiJ$ of the $\V$-functor category $[\J^\op,\uV]$, except that the latter need not exist as a $\V$-category.  Hence since $\C$ is $\PhiJ$-cocomplete, the fixed $\V$-functor $P:\J \rightarrow \C$ determines a $\V$-functor $(-) \star P:\PhiJ \rightarrow \C$, and $\Lan_j P$ factors as the composite
\begin{equation}\label{eq:kan_ext_as_composite}\uV \overset{\y_j}{\longrightarrow} \PhiJ \xrightarrow{(-) \star P} \C\;.\end{equation}
Since $j$ is eleutheric, it follows that $\y_j$ is $\PhiJ$-cocontinuous, sending each $\PhiJ$-colimit $\y_j(V) \star Q$ in $\uV$ to a pointwise colimit $\y_j(V) \star (\y_j \circ Q)$ in the $\V$-category $\PhiJ$.  But by \cite[\S 3.3 (3.23)]{Ke:Ba}, $(-) \star P$ sends pointwise colimits to colimits in $\C$, so the composite \eqref{eq:kan_ext_as_composite} is $\PhiJ$-cocontinuous, i.e. $\Lan_j P$ is $\PhiJ$-cocontinuous as needed.
\end{proof}

We can now finish the proof of Theorem \bref{thm:eleutheric}.  For each $\PhiJ$-cocomplete $\V$-category $\C$, the adjunction \eqref{eq:lanj_adj} restricts to an equivalence between $\VCAT(\J,\C)$ and the full subcategory $\sL$ of $\VCAT(\uV,\C)$ consisting of left Kan extensions along $j$.  If $j$ is eleutheric, then $\sL = \Cocts{\PhiJ}(\uV,\C)$ by \bref{thm:jary_charns}, so $j$ is a free $\PhiJ$-cocompletion.  Conversely, suppose that $j$ is a free $\PhiJ$-cocompletion.  Then, taking $\C = \uV$ in \eqref{eq:lanj_adj}, the right adjoint $\VCAT(j,\uV)$ restricts to an equivalence $\sS \simeq \VCAT(\J,\uV)$ for each of two choices of full replete subcategory $\sS \hookrightarrow \VCAT(\uV,\uV)$, namely (i) $\sS = \sL$ and (ii) $\sS = \Cocts{\PhiJ}(\uV,\uV)$.  But $\Cocts{\Phi}(\uV,\uV) \subseteq \sL$ by \bref{thm:jary_charns} and so it follows that $\Cocts{\PhiJ}(\uV,\uV) = \sL$, i.e. a $\V$-functor $T:\uV \rightarrow \uV$ is $\PhiJ$-cocontinuous if and only if it is a left Kan extension along $j$.  In particular, each $\V$-functor $\uV(J,-):\uV \rightarrow \uV$ with $J \in \ob\J$ is a left Kan extension of $\J(J,-):\J \rightarrow \uV$ along $j$ and so is $\PhiJ$-cocontinuous.  Hence the weights in $\PhiJ$ are $\J$-flat, so $j$ is eleutheric and the theorem is proved.

\section{Free \texorpdfstring{$\T$}{T}-algebras and monadicity}

In the present section, we assume that $\V$ has equalizers.  Let $\T$ be a $\J$-theory for a system of arities $j:\J \hookrightarrow \uV$.

\begin{PropSub}\label{thm:monadicity}
Let $\C$ be a $\V$-category with standard designated $\J$-cotensors, and suppose that $\Alg{\T}_\C$ exists.  Then the following are equivalent: \textnormal{(1)} $G:\Alg{\T}_\C \rightarrow \C$ is $\V$-monadic, \textnormal{(2)} $G':\Alg{\T}^!_\C \rightarrow \C$ is strictly $\V$-monadic, \textnormal{(3)} $G$ has a left adjoint, \textnormal{(4)} $G'$ has a left adjoint.  Further, if $\C = \uV$ and these equivalent conditions hold, then the induced $\V$-monads on $\uV$ conditionally preserve $\J$-flat colimits.
\end{PropSub}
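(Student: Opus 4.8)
The plan is to reduce the equivalence of (1)--(4) to the single assertion that whenever $G$ (respectively $G'$) has a left adjoint it is automatically $\V$-monadic (respectively strictly $\V$-monadic), together with a transfer of the existence of a left adjoint across the equivalence $\Alg{\T}_\C \simeq \Alg{\T}^!_\C$ of \bref{thm:equiv_alg_nalg}. The implications $(1)\Rightarrow(3)$ and $(2)\Rightarrow(4)$ are trivial. For $(3)\Leftrightarrow(4)$ I would note that the inclusion $\Alg{\T}^!_\C \hookrightarrow \Alg{\T}_\C$ is an equivalence by \bref{thm:equiv_alg_nalg}, and since the carrier $\V$-functor $G$ is evaluation at $I$ while normalization $A \mapsto A^!$ leaves $AI$ unchanged, this is an equivalence in $\VCAT \slash\: \C$ carrying $G$ to $G'$; hence $G$ has a left adjoint iff $G'$ does. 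It then remains to establish $(3)\Rightarrow(1)$ and $(4)\Rightarrow(2)$, completing the cycle $(1)\Leftrightarrow(3)\Leftrightarrow(4)\Leftrightarrow(2)$, and then the final assertion.

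For $(3)\Rightarrow(1)$ I would appeal to the enriched Beck monadicity theorem as recalled in \bref{par:vmonads_and_vmonadicity}. The key observation is that a conical coequalizer of a $G$-contractible pair, where it exists, is a colimit of exactly the kind treated in \bref{thm:carrier_vfunc_cr_g_rel_jstb_colims}: if $(f,g)$ is a $G$-contractible pair in $\Alg{\T}_\C$, then $(Gf,Gg)$ is a contractible pair in $\C$, so every conical coequalizer of $(Gf,Gg)$ is a split coequalizer and hence absolute; in the language of \bref{par:jflat_and_gabs_colims} the conical weighted diagram on $(f,g)$ is thus $G$-absolute, and therefore $G$-relatively $\J$-stable since $\C$ has designated $\J$-cotensors. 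By \bref{thm:carrier_vfunc_cr_g_rel_jstb_colims}(1), $G$ detects, reflects and conditionally preserves every $G$-relatively $\J$-stable colimit, in particular conical coequalizers of $G$-contractible pairs; combined with the assumed left adjoint, this is precisely what Beck's theorem (in the ``conditional'' form appropriate when $\C$ need not be cocomplete) requires in order to conclude that $G$ is $\V$-monadic. The argument for $(4)\Rightarrow(2)$ is the same, using \bref{thm:carrier_vfunc_cr_g_rel_jstb_colims}(2) in place of (1): $G'$ creates $G'$-relatively $\J$-stable colimits, in particular conical coequalizers of $G'$-contractible pairs, so $G'$, having a left adjoint, is strictly $\V$-monadic by Beck's theorem.

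Finally, for the $\J$-flat assertion with $\C = \uV$, I would argue directly: writing $\TT = (T,\eta,\mu)$ for the $\V$-monad induced by $F \dashv G$, so that $T = GF$, one takes a $\J$-flat weighted diagram $(W,D)$ in $\uV$ with $W \star D$ and $W \star (TD)$ both existing; since $F$ is a left adjoint it preserves $W \star D$, giving $W \star (FD) = F(W \star D)$ in $\Alg{\T}$, and since $(W,FD)$ has $\J$-flat weight it is $G$-relatively $\J$-stable by the observation in \bref{par:jflat_and_gabs_colims}. As $W \star (GFD) = W \star (TD)$ exists, \bref{thm:g_cr_jflat_and_gabs_colims} (equivalently \bref{thm:carrier_vfunc_cr_g_rel_jstb_colims}(1)) shows that $G$ preserves $W \star (FD)$, whence $T(W \star D) = G(W \star FD) = W \star (GFD) = W \star (TD)$; so $T$ conditionally preserves $\J$-flat colimits. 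I expect the only point requiring care throughout to be the one flagged in the second paragraph --- that coequalizers of contractible pairs genuinely fall within the scope of the $G$-absolute, hence $G$-relatively $\J$-stable, colimits handled by \bref{thm:carrier_vfunc_cr_g_rel_jstb_colims}, so that the cocompleteness-free version of Beck's theorem can be invoked; the rest is formal.
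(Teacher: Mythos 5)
Your proposal is correct and follows essentially the same route as the paper's (much terser) proof: the equivalence of (1)--(4) comes from Proposition \bref{thm:carrier_vfunc_cr_g_rel_jstb_colims} together with the observation of \bref{par:jflat_and_gabs_colims} that $G$-absolute diagrams are $G$-relatively $\J$-stable (contractible coequalizers being absolute) and the Beck monadicity theorem as stated in \bref{par:vmonads_and_vmonadicity}, while the final claim follows since $F$ preserves all colimits and $G$ conditionally preserves $\J$-flat colimits by \bref{thm:g_cr_jflat_and_gabs_colims}. Your explicit transfer of the left adjoint across the equivalence of \bref{thm:equiv_alg_nalg} for $(3)\Leftrightarrow(4)$, and your unfolding of the conditional-preservation argument for the induced monad, simply make explicit what the paper leaves implicit.
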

\begin{proof}
The needed equivalence follows immediately from \bref{thm:carrier_vfunc_cr_g_rel_jstb_colims}, \bref{par:jflat_and_gabs_colims}, and the Beck monadicity theorem \pbref{par:vmonads_and_vmonadicity}, since contractible coequalizers are absolute coequalizers.  Now assuming that $\C = \uV$, $G$ conditionally preserves $\J$-flat colimits by \bref{thm:g_cr_jflat_and_gabs_colims}, so if $G$ has a left adjoint $F$ then the induced $\V$-monad $T = GF$ conditionally preserves $\J$-flat colimits as well, since $F$ preserves all colimits.
\end{proof}

We show in \bref{thm:existence_and_vmonadicity_of_talg} below that the equivalent conditions of \bref{thm:monadicity} are satisfied when $\C = \uV$ and $j$ is eleutheric.  Let us assume for the moment that $\Alg{\T}$ exists, though we will soon see that this assumption is unnecessary when $j$ is eleutheric.

\begin{ParSub}\label{par:free_talg_vfunc_on_j}
Observe that for each object $J$ of $\J$, the representable $\V$-functor $\T(J,-):\T \rightarrow \uV$ is a $\T$-algebra, so we obtain a $\V$-functor $\y:\T^\op \rightarrow \Alg{\T}$.  Let $\phi$ denote the composite $\V$-functor
$$\J \xrightarrow{\tau^\op} \T^\op \xrightarrow{\y} \Alg{\T}\;,$$
so that on objects
$$\phi(J) = \T(J,-)\;.$$
\end{ParSub}

\begin{PropSub}\label{thm:free_alg_on_j}
There are isomorphisms
\begin{equation}\label{eq:rel_adj}\Alg{\T}(\phi(J),A) \cong \uV(j(J),GA)\end{equation}
$\V$-natural in $J \in \J$ and $A \in \Alg{\T}$.  Explicitly, the isomorphism \eqref{eq:rel_adj} can be expressed as the composite
\begin{equation}\label{eq:expl_reladj_iso}\Alg{\T}(\phi(J),A) \xrightarrow{G_{\phi(J) A}} \uV(G\phi(J),GA) \xrightarrow{\uV(\gamma_J,1)} \uV(J,GA)\end{equation}
where $\gamma_J:J \rightarrow \T(J,I) = G\phi(J)$ is the cotensor counit.
\end{PropSub}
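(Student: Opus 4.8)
The plan is to realize the isomorphism \eqref{eq:rel_adj} as the composite of the enriched Yoneda isomorphism with the cotensor comparison supplied by the $\T$-algebra structure of $A$, and then to check that this composite is exactly the concrete map \eqref{eq:expl_reladj_iso}. First I would note that $\phi(J) = \T(J,-)$ really is a $\T$-algebra, since representable $\V$-functors preserve all weighted limits that exist and hence preserve $\J$-cotensors. Consequently, by the definition of the hom-objects of $\Alg{\T}$, the object $\Alg{\T}(\phi(J),A)$ is the end $\int_{K \in \T}\uV(\T(J,K),AK) = [\T,\uV](\T(J,-),A)$, which by the strong form of the enriched Yoneda lemma \cite[(2.31)]{Ke:Ba} is isomorphic to $AJ$, $\V$-naturally in $J \in \J$ and $A \in \Alg{\T}$.

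Second, since each object $J$ of $\T$ is the cotensor $[J,I]$ with counit $\gamma_J$ \pbref{par:forming_cot_in_jth} and $A$ is a $\T$-algebra, the comparison morphism
$$\kappa_A^J \colon AJ \;=\; A[J,I] \;\longrightarrow\; [J,AI] \;=\; [J,GA] \;=\; \uV(J,GA)$$
is an isomorphism \pbref{thm:charns_talgs}, where $\uV$ carries its standard designated $\J$-cotensors \pbref{rem:des_cot_v}, and it is $\V$-natural in $J$ and in $A$ \pbref{rem:vnat_kappa_in_a}. Composing the Yoneda isomorphism with $\kappa_A^J$ then yields a $\V$-natural isomorphism $\Alg{\T}(\phi(J),A) \cong \uV(J,GA)$, establishing \eqref{eq:rel_adj}.

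Third, and finally, I would identify this abstract composite with \eqref{eq:expl_reladj_iso}. Here two standard unwindings are needed: (i) $\kappa_A^J$ is, by construction of the cotensor comparison, the transpose under the universal property of $[J,AI]$ of the morphism $J \xrightarrow{\gamma_J} \T(J,I) \xrightarrow{A_{JI}} \uV(AJ,AI)$ obtained by applying $A$ to the cotensor counit $\gamma_J$ of $[J,I]$ in $\T$; and (ii) the Yoneda isomorphism $[\T,\uV](\T(J,-),A) \to AJ$ sends a $\V$-natural transformation $\alpha$ to its component at $\id_J$. Feeding (i) and (ii) into the $\V$-naturality of such an $\alpha$, evaluated on the morphisms $J \to I$ of $\T$ of the form $\gamma_J \cdot j'$ --- which is precisely the point at which $\gamma_J$ enters, as in the proof of \bref{thm:vfunc_ind_by_des_jcots} --- one obtains that the transpose of $\kappa_A^J$ applied to the Yoneda image of $\alpha$ is exactly $\alpha_I \circ \gamma_J = \uV(\gamma_J,1)\bigl(G_{\phi(J),A}(\alpha)\bigr)$, as required. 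The only genuine work is this last bookkeeping of counits; it is a routine naturality computation once the cotensor counits are set up as in \bref{par:forming_cot_in_jth}, and every other step is an immediate invocation of the enriched Yoneda lemma, \bref{thm:charns_talgs}, and \bref{rem:vnat_kappa_in_a}.
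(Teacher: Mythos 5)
Your proof is correct and follows essentially the same route as the paper's: both combine the enriched Yoneda isomorphism $\Alg{\T}(\T(\tau(J),-),A)\cong A(\tau(J))$ with the cotensor-comparison isomorphism $\kappa_A^J:A(\tau(J))\to\uV(J,GA)$ of \bref{thm:charns_talgs} and \bref{rem:vnat_kappa_in_a}, and then identify the unit of the resulting representation as $\gamma_J$ to obtain the explicit composite \eqref{eq:expl_reladj_iso}. The only difference is that you spell out the final bookkeeping that the paper compresses into ``readily seen'' together with a citation of \cite[(1.48)]{Ke:Ba}.
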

\begin{proof}
By \bref{thm:charns_talgs} and \bref{rem:vnat_kappa_in_a}, $A(\tau(J)) \cong \uV(J,AI) = \uV(J,GA)$ $\V$-naturally in $A \in \Alg{\T}$, $J \in \J^\op$.  Hence, by Yoneda we compute that
$$\Alg{\T}(\phi(J),A) = \Alg{\T}(\T(\tau(J),-),A) \cong A(\tau(J)) \cong \uV(J,GA)$$
$\V$-naturally in $A \in \Alg{\T}$.  For each object $J$, the unit of the resulting representation is readily seen to be $\gamma_J$, and so we can express the representation isomorphism via \cite[(1.48)]{Ke:Ba} as the composite \eqref{eq:expl_reladj_iso}.
\end{proof}

\begin{LemSub}\label{thm:lem_rel_adj}
Let $G:\A \rightarrow \B$ be a $\V$-functor, and let $J:\C \rightarrow \B$ be a dense $\V$-functor.  Let $E:\C \rightarrow \A$ be a $\V$-functor such that $\A(EC,A) \cong \B(JC,GA)$, $\V$-naturally in $C \in \C$, $A \in \A$.  Then $G$ has a left adjoint if and only if $E$ has a left Kan extension $F$ along $J$, in which case $F$ is left adjoint to $G$.
\end{LemSub}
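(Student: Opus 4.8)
The statement is the standard fact that, in the presence of a density assumption, left Kan extension along a dense functor computes a left adjoint. I would prove it by a direct pointwise-colimit argument, exploiting the density of $J:\C \rightarrow \B$ to reconstruct each object $B$ of $\B$ as a weighted colimit $\y_J(B) \star J$ where $\y_J(B) = \B(J-,B):\C^\op \rightarrow \uV$. The key observation is that, given the hypothesised isomorphism $\A(EC,A) \cong \B(JC,GA)$, $\V$-natural in $C$ and $A$, the existence of a left adjoint $F \dashv G$ is equivalent to the existence, for each $B \in \B$, of a representing object for the $\V$-functor $\A(E-,A) \mapsto$ (something weighted by $\y_J(B)$); I would make this precise and identify that representing object with the weighted colimit $\y_J(B) \star E$, which is exactly the pointwise formula for $\Lan_J E$ evaluated at $B$.

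**Key steps, in order.** First I would recall from \bref{thm:sys_ar_dense}-style density (here the density of $J$) that the identity on $\y_J(B)$ exhibits $B = \y_J(B) \star J$, so that $\B(B,B') \cong [\C^\op,\uV](\y_J(B),\B(J-,B'))$, $\V$-naturally in $B'$. Second, suppose $F$ is a left Kan extension of $E$ along $J$; by the pointwise formula (\cite[\S 4.1, (4.25)]{Ke:Ba}) we have $FB \cong \y_J(B) \star E$ whenever that colimit exists, and conversely the existence of all these colimits gives the pointwise left Kan extension. Third, compute:
\begin{equation*}
\A(FB,A) \;\cong\; \A(\y_J(B) \star E, A) \;\cong\; [\C^\op,\uV]\bigl(\y_J(B), \A(E-,A)\bigr) \;\cong\; [\C^\op,\uV]\bigl(\y_J(B), \B(J-,GA)\bigr) \;\cong\; \B(B,GA)\,,
\end{equation*}
where the second isomorphism is the defining property of the weighted colimit, the third is the hypothesised natural isomorphism $\A(E-,A) \cong \B(J-,GA)$, and the fourth is density of $J$. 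This chain is $\V$-natural in both $B$ and $A$, exhibiting $F \dashv G$. Fourth, for the converse: if $G$ has a left adjoint $F'$, then $\A(F'JC, A) \cong \B(JC, GA) \cong \A(EC,A)$ $\V$-naturally, so by Yoneda $F'JC \cong EC$, i.e. $F' \circ J \cong E$; and since $F'$, being a left adjoint, preserves all colimits, it preserves the density presentations $B = \y_J(B)\star J$, whence $F'B \cong \y_J(B) \star (F'J) \cong \y_J(B)\star E$, so these colimits exist and $F'$ is the pointwise left Kan extension of $E$ along $J$. Thus $E$ has a left Kan extension along $J$, namely $F'$, which is left adjoint to $G$.

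**Main obstacle.** The argument is essentially a bookkeeping exercise in $\V$-natural isomorphisms, so the only real care needed is in checking that the chain of isomorphisms in the third step is $\V$-natural in $B$ as well as in $A$ — the hypothesis only gives naturality of $\A(E-,A)\cong\B(J-,GA)$ in $C$ and $A$, and one must verify that applying the (contravariant, $\V$-functorial) assignment $\y_J(-)$ and the weighted-colimit isomorphism preserves naturality in $B$. This is routine given that $\y_J:\B \rightarrow [\C^\op,\uV]$ is a $\V$-functor (the co-Yoneda embedding for $J$) and that $W \star E$ is $\V$-functorial in $W$, but it is the step I would write out most carefully. A secondary subtlety is that $[\C^\op,\uV]$ may not exist as a genuine $\V$-category (no cocompleteness of $\V$); as elsewhere in the paper, one interprets $[\C^\op,\uV](\y_J(B),-)$ as the object of $\V$-natural transformations, which exists because $\y_J(B)$ arises from a dense functor, and this is precisely what the weighted-colimit defining property supplies.
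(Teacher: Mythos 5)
Your proposal is correct and follows essentially the same route as the paper: density of $J$ plus the hypothesised isomorphism identifies $\B(B,GA)$ with $[\C^\op,\uV](\B(J-,B),\A(E-,A))$, and the pointwise formula $FB = \B(J-,B) \star E$ then yields the adjunction in one direction and the existence of the Kan extension in the other. The only (immaterial) difference is in the converse: the paper reads off $FB = \B(J-,B)\star E$ directly from the chain $\A(FB,A)\cong\B(B,GA)\cong[\C^\op,\uV](\B(J-,B),\A(E-,A))$, whereas you first establish $F'J\cong E$ by Yoneda and then apply cocontinuity of $F'$ to the density presentation — both are valid.
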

\begin{proof}
Since $J$ is dense we have $\B(B,GA) \cong [\C^\op,\uV](\B(J-,B),\B(J-,GA)) \cong [\C^\op,\uV](\B(J-,B),\A(E-,A))$, $\V$-naturally in $B \in \B$, $A \in \A$.  Hence if $F$ is a left Kan extension of $E$ along $J$ then $FB = \B(J-,B) \star E$ and so $\A(FB,A) \cong [\C^\op,\uV](\B(J-,B),\A(E-,A)) \cong \B(B,GA)$, $\V$-naturally.  Conversely, if $F$ is left adjoint to $G$ then $\A(FB,A) \cong \B(B,GA) \cong [\C^\op,\uV](\B(J-,B),\A(E-,A))$, so $FB = \B(J-,B) \star E$.
\end{proof}

By \bref{thm:lem_rel_adj}, \bref{thm:free_alg_on_j}, and \bref{thm:monadicity} we deduce the following:

\begin{CorSub}\label{thm:talg_vmonadic_iff_lanjphi_exists}
The $\V$-functor $G:\Alg{\T} \rightarrow \uV$ is $\V$-monadic if and only if the left Kan extension of $\phi:\J \rightarrow \Alg{\T}$ along $j:\J \hookrightarrow \uV$ exists.  If $\Lan_j\phi$ exists, then it is left adjoint to $G$.
\end{CorSub}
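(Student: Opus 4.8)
The plan is to splice together the three results cited immediately before the statement, together with the density of $j$. Since this section carries the blanket hypothesis that $\V$ has equalizers and we are assuming that $\Alg{\T}$ exists, I would first apply Proposition~\bref{thm:monadicity} in the case $\C = \uV$, with the standard designated $\J$-cotensors on $\uV$ fixed in \bref{rem:des_cot_v}. The equivalence of conditions (1) and (3) there states precisely that $G \colon \Alg{\T} \to \uV$ is $\V$-monadic if and only if $G$ has a left adjoint. Thus the corollary reduces to showing that $G$ has a left adjoint exactly when $\Lan_j\phi$ exists, and that in that case $\Lan_j\phi \dashv G$.

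For this reduced claim I would invoke Lemma~\bref{thm:lem_rel_adj}, taking $\A = \Alg{\T}$, $\B = \uV$, and $\C = \J$, with the dense $\V$-functor of the lemma being $j \colon \J \hookrightarrow \uV$ itself, which is dense by Proposition~\bref{thm:sys_ar_dense}, and with $E = \phi \colon \J \to \Alg{\T}$. The lone hypothesis of that lemma, namely the $\V$-natural isomorphism $\Alg{\T}(\phi(J),A) \cong \uV(j(J),GA)$ in $J \in \J$ and $A \in \Alg{\T}$, is exactly the content of Proposition~\bref{thm:free_alg_on_j}. The lemma then yields both remaining assertions simultaneously: $G$ has a left adjoint if and only if $\phi$ admits a left Kan extension along $j$, and in that case $\Lan_j\phi$ is left adjoint to $G$. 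Chaining this with the first step gives the corollary.

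I do not anticipate any genuine obstacle, since every nontrivial ingredient has already been established; the proof is a bookkeeping exercise. The only care needed is to check that the direction and variance of the isomorphism in Proposition~\bref{thm:free_alg_on_j} match the template demanded by Lemma~\bref{thm:lem_rel_adj} --- they do, with the lemma's abstract ``$J$'' instantiated as our $j$ --- and to keep in mind that the standing assumption that $\V$ has equalizers is precisely what makes Proposition~\bref{thm:monadicity} (and the very formation of the $\V$-category of algebras) available here.
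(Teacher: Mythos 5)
Your proposal is correct and follows exactly the paper's argument: the paper deduces this corollary precisely by combining Lemma \bref{thm:lem_rel_adj} (instantiated with the dense $\V$-functor $j$ and $E = \phi$), Proposition \bref{thm:free_alg_on_j}, and Proposition \bref{thm:monadicity}. No differences worth noting.
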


Now removing our assumption that $\Alg{\T}$ exists, the following corollary to \bref{thm:talg_vmonadic_iff_lanjphi_exists} is easily obtained on the basis of a theorem of Kelly on the category of models of an enriched sketch:

\begin{CorSub}\label{thm:case_vlb_jsmall}
Suppose that $\V$ is locally bounded \cite[\S 6.1]{Ke:Ba} and that $\J$ is essentially small.  Then $\Alg{\T}$ exists and is complete and cocomplete as a $\V$-category, and the $\V$-functor $G:\Alg{\T} \rightarrow \uV$ is $\V$-monadic.  Further, the induced $\V$-monad preserves small $\J$-flat colimits and conditionally preserves all $\J$-flat colimits.
\end{CorSub}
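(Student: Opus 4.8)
The plan is to realize $\Alg{\T}$ as the $\V$-category of models of a small limit sketch on $\T$, apply Kelly's theorem on categories of models of enriched sketches to obtain its existence, reflectivity, and (co)completeness, and then read off $\V$-monadicity and the colimit-preservation properties from results already in hand.

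First I would note that, since $\ob\T = \ob\J$ and $\J$ is essentially small, $\T$ is essentially small; and since $\V$ is locally bounded it is in particular complete and cocomplete, so the $\V$-functor $\V$-category $[\T,\uV]$ exists and is complete and cocomplete with (co)limits formed pointwise. By \bref{thm:charns_talgebras}, a $\V$-functor $A:\T \to \uV$ is a $\T$-algebra exactly when it preserves the cotensors $[J,I]=J$ of $I$ in $\T$ \pbref{par:forming_cot_in_jth}; picking one object $J$ from each isomorphism class in $\J$ yields a small set of projective cylinders on $\T$, so $\Alg{\T}$ is precisely the $\V$-category of models in $\uV$ of the resulting small limit sketch. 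Kelly's theorem on the models of an enriched sketch over a locally bounded base then guarantees that $\Alg{\T}$ exists and is a reflective full sub-$\V$-category of $[\T,\uV]$, hence is cocomplete; moreover, since each $\V$-functor $[J,-]=\uV(J,-):\uV \to \uV$ preserves weighted limits, $\Alg{\T}$ is closed under limits in $[\T,\uV]$ (limits commuting with limits), so it is complete with limits formed pointwise. This establishes the first two assertions.

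Next, because $\J$ is essentially small and $\Alg{\T}$ is cocomplete, the pointwise left Kan extension $\Lan_j\phi$ of $\phi:\J \to \Alg{\T}$ \pbref{par:free_talg_vfunc_on_j} along $j$ exists, with value at $V$ the weighted colimit $\y_j(V)\star\phi$ in $\Alg{\T}$. By \bref{thm:talg_vmonadic_iff_lanjphi_exists}, $G:\Alg{\T}\to\uV$ is therefore $\V$-monadic, with associated $\V$-adjunction $\Lan_j\phi \dashv G$. For the induced $\V$-monad $\TT$: by \bref{thm:monadicity} it conditionally preserves all $\J$-flat colimits; and since $\uV$ is cocomplete, for a small $\J$-flat weighted diagram $(W,D)$ in $\uV$ the colimits $W\star D$ and $W\star\TT D$ both exist, so conditional preservation upgrades to $\TT(W\star D)\cong W\star\TT D$, i.e. $\TT$ preserves small $\J$-flat colimits.

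I expect the one step requiring real care to be the middle one: pinning down the precise form of Kelly's sketch theorem that applies (a \emph{small} sketch over a \emph{locally bounded} $\V$, with cotensor cylinders as projective cones) and verifying that the cotensor-preservation condition defining $\T$-algebras is genuinely captured by such a small limit sketch---this is exactly where essential smallness of $\J$ and the reduction in \bref{thm:charns_talgebras} to cotensors of $I$ are needed. Everything else is a routine assembly of \bref{thm:charns_talgebras}, \bref{thm:talg_vmonadic_iff_lanjphi_exists}, and \bref{thm:monadicity} together with the standard facts that reflective sub-$\V$-categories of cocomplete $\V$-categories are cocomplete and that the missing colimits above exist by cocompleteness of $\uV$.
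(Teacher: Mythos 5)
Your proposal is correct and follows essentially the same route as the paper: invoke Kelly's theorem on models of an enriched sketch (Theorem 6.11 of \cite{Ke:Ba}) to realize $\Alg{\T}$ as a reflective sub-$\V$-category of $[\T,\uV]$, deduce completeness and cocompleteness, obtain $\Lan_j\phi$ and hence $\V$-monadicity via \bref{thm:talg_vmonadic_iff_lanjphi_exists}, and get the preservation statements from \bref{thm:monadicity} plus cocompleteness of $\uV$. The only difference is that you spell out the small limit sketch of cotensor cylinders explicitly where the paper simply cites Kelly's theorem; this is a matter of detail, not of substance.
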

\begin{proof}
By Theorem 6.11 of \cite{Ke:Ba}, $\Alg{\T}$ is a reflective sub-$\V$-category of the $\V$-functor $\V$-category $[\T,\uV]$ (which exists since $\T$ is essentially small and $\uV$ is cocomplete).  Hence, $\Alg{\T}$ is complete and cocomplete, so since $\J$ is essentially small, the left Kan extension $F = \Lan_j\phi$ exists.  Therefore, by \bref{thm:talg_vmonadic_iff_lanjphi_exists}, $F$ is left adjoint to $G$ and $G$ is $\V$-monadic.  By \bref{thm:monadicity}, the induced $\V$-monad conditionally preserves $\J$-flat colimits and hence preserves small such.
\end{proof}

The preceding Corollary is fairly widely applicable, as Kelly showed that many closed categories $\V$ are locally bounded \cite[\S 6.1]{Ke:Ba}.  Note however that the left adjoint $F$ to $G$ (and hence the induced $\V$-monad) is given just in terms of certain colimits in $\Alg{\T}$, and we have no simple recipe for how these are formed in terms of the basic ingredients $\V$, $\J$, $\T$.

On the other hand, for an \textit{eleutheric} system of arities $j:\J \hookrightarrow \uV$ we shall see that the colimits needed in order to form the left adjoint $F$ are detected and preserved by $G:\Alg{\T} \rightarrow \uV$, and we do have a simple recipe for them.  More generally, even when $j$ is not eleutheric, we shall find that certain theories $\T$ that we call \textit{extensible} admit this same reasoning, and in fact their associated $\V$-categories of algebras always exist, as a consequence.

\begin{DefSub}
We say that a $\V$-functor $T:\J \rightarrow \uV$ is \textbf{extensible} if the left Kan extension of $T$ along $j:\J \hookrightarrow \uV$ exists and is preserved by each $\V$-functor $\uV(J,-):\uV \rightarrow \uV$ with $J \in \ob\J$.  Note that $T$ is extensible iff for each object $V$ of $\V$, the colimit $\uV(j-,V) \star T$ exists and is $\J$-stable.  We say that a $\J$-theory $(\T,\tau)$ is \textbf{extensible} if $\T(\tau-,I):\J \rightarrow \uV$ is extensible.
\end{DefSub}

\begin{RemSub}
Observe that $j:\J \hookrightarrow \uV$ is eleutheric iff every $T:\J \rightarrow \uV$ is extensible.  Hence every $\J$-theory for an eleutheric system of arities is extensible.
\end{RemSub}

\begin{ThmSub}\label{thm:existence_and_vmonadicity_of_talg}
Let $\T$ be an extensible $\J$-theory.  Then 
\begin{enumerate}
\item the $\V$-category of $\T$-algebras $\Alg{\T}$ exists,
\item the $\V$-functor $G = \ca{-}:\Alg{\T} \rightarrow \uV$ is $\V$-monadic,
\item the restriction $G':\Alg{\T}^! \rightarrow \uV$ is strictly $\V$-monadic, and
\item the induced $\V$-monads on $\uV$ conditionally preserve $\J$-flat colimits.
\end{enumerate}
\end{ThmSub}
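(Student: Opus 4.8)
The plan is to reduce the theorem to the already-settled locally-bounded, essentially-small case \bref{thm:case_vlb_jsmall} by passing to a suitable enlargement of $\V$ and then descending the resulting monad. First I would fix a universe $\U$ for which $\J$, and hence also $\T$, is $\U$-small, and choose an enlargement $\V'$ of $\V$ as in \bref{par:enl} which is moreover locally bounded with respect to $\U$ (the enlargements of \cite[\S 3.11, 3.12]{Ke:Ba} may be so chosen); then the embedding $\uV\hookrightarrow\uV'$ preserves all limits that exist in $\uV$ --- in particular all $\J$-cotensors $\uV(J,V)$ --- and all $\U$-small colimits that exist in $\uV$, and $\T$ becomes a $\J$-theory over $\V'$. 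Now \bref{thm:case_vlb_jsmall} applies over $\V'$ and gives that $\Alg{\T}_{\uV'}$ exists, is complete and cocomplete, and that $\tilde G = \Ev_I:\Alg{\T}_{\uV'}\to\uV'$ is $\V'$-monadic; writing $\TT' = (T',\eta',\mu')$ for the induced $\V'$-monad and $\tilde F$ for its left adjoint, and letting $\tilde\phi:\J\to\Alg{\T}_{\uV'}$ be the $\V'$-functor $J\mapsto\T(J,-)$, \bref{thm:talg_vmonadic_iff_lanjphi_exists} identifies $\tilde F$ with $\Lan_j\tilde\phi$, so that $\tilde F V = \y_j(V)\star\tilde\phi$.

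The crucial step is to show that $\TT'$ restricts to a $\V$-monad on $\uV$. Write $T_0 = \tilde G\tilde\phi = \T(\tau-,I):\J\to\uV$, which is extensible by hypothesis; thus for each $V\in\ob\V$ the colimit $\y_j(V)\star T_0$ exists in $\uV$ and is $\J$-stable, and since the embedding $\uV\hookrightarrow\uV'$ carries this $\U$-small colimit, together with the $\J$-cotensors witnessing its stability, to the corresponding colimit in $\uV'$, the colimit $\y_j(V)\star T_0$ exists in $\uV'$ and is still $\J$-stable there. Consequently the weighted diagram $(\y_j(V),\tilde\phi)$ in $\Alg{\T}_{\uV'}$ is $\tilde G$-relatively $\J$-stable, and $\y_j(V)\star(\tilde G\tilde\phi)$ exists, so by \bref{thm:carrier_vfunc_cr_g_rel_jstb_colims}(1) --- conditional preservation being exactly what is needed, since the target colimit is known to exist --- the colimit $\tilde F V = \y_j(V)\star\tilde\phi$ is preserved by $\tilde G$, whence $T'V = \tilde G\tilde F V = \y_j(V)\star T_0 \in\uV$. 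Since $\uV\hookrightarrow\uV'$ is full, $T'$ restricts to a $\V$-functor $T:\uV\to\uV$, and the components of $\eta'$ and $\mu'$ at objects of $\uV$ are morphisms of $\uV$; hence $\TT = (T,\eta,\mu)$ is a $\V$-monad on $\uV$, and $\uV^{\TT}$ exists since $\V$ has equalizers.

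Next I would identify $\Alg{\T}$ with the full sub-$\V'$-category of $\Alg{\T}_{\uV'}$ on those algebras whose carrier lies in $\uV$: by \bref{thm:charns_talgs} and \bref{rem:vnat_kappa_in_a}, such an algebra $A$ satisfies $AJ\cong\uV(J,\ca A)\in\uV$ for every $J$, so it is (the image under the embedding of) a $\T$-algebra valued in $\uV$, and conversely. To see that this is a genuine $\V$-category it remains to check that for such $A,B$ the hom-object $\Alg{\T}_{\uV'}(A,B)$ lies in $\uV$; but under the monadic equivalence $\Alg{\T}_{\uV'}\simeq(\uV')^{\TT'}$, which is over $\uV'$, this hom-object is the $\uV'$-object of $\TT'$-homomorphisms between the corresponding $\TT'$-algebras, i.e.\ an equalizer in $\uV'$ of a parallel pair of morphisms between the $\uV$-objects $\uV'(\ca A,\ca B) = \uV(\ca A,\ca B)$ and $\uV'(T'\ca A,\ca B) = \uV(T\ca A,\ca B)$; since $\V$ has equalizers and these are preserved by the embedding, this object lies in $\uV$. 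Thus $\Alg{\T}$ exists, which is (1). Finally, the restriction of $\tilde F$ lands in $\Alg{\T}$ --- its carrier is $T'V = TV\in\uV$ --- and is left adjoint to $G:\Alg{\T}\to\uV$ via the adjunction isomorphism $\Alg{\T}(\tilde F V,A)\cong\uV'(V,\ca A) = \uV(V,\ca A)$; so the equivalent conditions of \bref{thm:monadicity} with $\C=\uV$ all hold, giving that $G$ is $\V$-monadic (2), that $G':\Alg{\T}^!\to\uV$ is strictly $\V$-monadic (3), and, by the final assertion of \bref{thm:monadicity}, that the induced $\V$-monads on $\uV$ conditionally preserve $\J$-flat colimits (4).

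I expect the main obstacle to be the restriction step of the second paragraph: one must be confident that $T'V$ is genuinely computed as the $\J$-stable colimit $\y_j(V)\star T_0$, which rests on combining the identification $\tilde F = \Lan_j\tilde\phi$ with the conditional-preservation property of $\tilde G$ and with the (easily overlooked) fact that the enlargement embedding transports the relevant $\U$-small colimits and $\J$-cotensors faithfully. By comparison the remaining ingredients --- that the enlargement may be taken locally bounded, and that the hom-objects of $\Alg{\T}$ descend to $\uV$ by way of the single-equalizer description of Eilenberg--Moore hom-objects --- are comparatively routine bookkeeping.
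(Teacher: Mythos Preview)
Your argument is essentially correct, but it takes a longer route than the paper's. The core computation is the same in both: extensibility says $\y_j(V)\star T_0$ exists in $\uV$ and is $\J$-stable, so by \bref{thm:carrier_vfunc_cr_g_rel_jstb_colims} the colimit $\y_j(V)\star\phi$ exists in the algebra category, and hence by \bref{thm:talg_vmonadic_iff_lanjphi_exists} the forgetful functor has a left adjoint. The difference lies in which algebra category one works with.

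The paper never passes to $\Alg{\T}_{\uV'}$. Instead it forms $\Alg{\T}_{\uV}$ --- the category of $\T$-algebras \emph{valued in $\uV$} --- directly as a $\V'$-category, which exists simply because $\V'$ is $\U$-complete and $\T$ is $\U$-small (so the defining ends exist in $\V'$). One then applies \bref{thm:free_alg_on_j} and \bref{thm:talg_vmonadic_iff_lanjphi_exists} \textit{mutatis mutandis} to $G:\Alg{\T}_{\uV}\to\uV$ and concludes, via the very computation you isolate, that $G$ is $\V'$-monadic. The point is that the resulting $\V'$-monad already lives on $\uV$, and any $\V'$-monad on $\uV$ is automatically a $\V$-monad (by fullness of $\uV\hookrightarrow\uV'$); hence $\Alg{\T}_{\uV}$ is a $\V$-category and $G$ is $\V$-monadic, with no restriction or descent step required. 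The paper therefore needs only a $\U$-complete enlargement, not a locally bounded one, and does not invoke \bref{thm:case_vlb_jsmall} at all.

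Your route --- invoking \bref{thm:case_vlb_jsmall} over a locally bounded $\V'$ to get $\Alg{\T}_{\uV'}$ monadic over $\uV'$, then restricting the monad and carving out $\Alg{\T}$ inside $\Alg{\T}_{\uV'}$ --- buys a clean reduction to a previously proved black box, at the cost of the additional hypothesis on the enlargement, the bookkeeping of the restriction, and the hom-object descent via Eilenberg--Moore equalizers. The paper's route buys directness: by working with $\uV$-valued algebras from the outset, the monad lands on $\uV$ for free, and parts 1--4 follow immediately from \bref{thm:monadicity}.
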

\begin{proof}
Let $\V'$ be a $\U$-complete enlargement of $\V$ \pbref{par:enl} such that $\J$ is $\U$-small.  The composite inclusion $\J \hookrightarrow \uV \hookrightarrow \underline{\V'}$ is a system of arities, with respect to which $\T$ may be considered as a ($\V'$-enriched) $\J$-theory.  Now $\Alg{\T}_{\uV}$ exists as a $\V'$-category, and again as in \bref{par:free_talg_vfunc_on_j} we obtain a $\V'$-functor $\phi:\J \rightarrow \Alg{\T}_{\uV}$ for which \bref{thm:free_alg_on_j} goes through, \textit{mutatis mutandis}.  Since any $\V'$-monad on $\uV$ is a $\V$-monad, it suffices to show that $G:\Alg{\T}_{\uV} \rightarrow \uV$ is $\V'$-monadic, for then $\Alg{\T}_{\uV}$ is necessarily a $\V$-category and $G$ is $\V$-monadic, so 1 and 2 hold with $\Alg{\T} = \Alg{\T}_{\uV}$, and it follows by \bref{thm:monadicity} that 3 and 4 hold as well.

Again as in \bref{thm:talg_vmonadic_iff_lanjphi_exists} we deduce that $G:\Alg{\T}_{\uV} \rightarrow \uV$ is $\V'$-monadic if and only if the $\V'$-enriched left Kan extension $\Lan_j \phi$ exists.  Letting $V$ be an object of $\V$, it therefore suffices to show that $\uV(j-,V) \star \phi$ exists.  By \bref{thm:carrier_vfunc_cr_g_rel_jstb_colims}, it suffices to show that the colimit $\uV(j-,V) \star (G \circ \phi)$ exists in $\uV$ and is $\J$-stable, but since $G \circ \phi = \T(\tau-,I):\J \rightarrow \uV$ this is immediate from the assumption that $\T$ is extensible.
\end{proof}

\section{Copresheaf-representable profunctors}

For an eleutheric system of arities, we shall show in \S \bref{sec:th_mon_bicat}  that $\J$-theories are the same as monads in a certain bicategory of $\V$-profunctors.  In the present section, we define that bicategory.

\begin{ParSub}\label{par:prof}
Given $\V$-categories $\A$ and $\B$, recall that a $\V$-\textbf{profunctor} $P$ from $\A$ to $\B$, written $P:\A \modto \B$, is a $\V$-functor\footnote{The choice of direction $P:\A \modto \B$ is evidently only one of two possible conventions, but will be convenient when paired with our convention for profunctor composition.} $P:\B^\op \otimes \A \rightarrow \uV$.  These are also called $\V$-\textit{(bi)modules} or $\V$-\textit{distributors}.  Given $\V$-profunctors
\begin{equation}\label{eqn:compbl_profunctors}P:\A \modto \B\;,\;\;\;Q:\B \modto \C\;,\end{equation} 
we say that a $\V$-profunctor $Q \otimes P:\A \modto \C$ is a \textbf{composite} of $P$ and $Q$ if it is a pointwise coend
$$(Q \otimes P)(C,A) = \int^{B \in \B} Q(C,B)\otimes P(B,A)\;,$$
$\V$-naturally in $C \in \C, A \in \A$.  If $\V$ is cocomplete (which we do not assume here), then $\V$-profunctors among \textit{small} $\V$-categories can always be composed and so (with a choice of coends) are the 1-cells of a bicategory $\VProf$ in which the identity 1-cells are the hom profunctors $\A(-,-)$ on small $\V$-categories $\A$.
\end{ParSub}

Recall that every $\V$-functor $F:\A \rightarrow \B$ determines $\V$-profunctors $F_* = \B(-,F-):\A \modto \B$ and $F^* = \B(F-,-):\B \modto \A$.

\begin{DefSub}
Let $j:\J \hookrightarrow \uV$ be a system of arities.  A $\V$-profunctor $M:\J \modto \J$ is said to be \textbf{copresheaf-representable} if it is a composite $j^* \otimes S_*$ for some $\V$-functor $S:\J \rightarrow \uV$.  Observe that $M$ is copresheaf-representable iff
\begin{equation}\label{eq:def_copr_rep}M \cong \uV(j-,S-)\;:\;\J^\op \otimes \J \rightarrow \uV\end{equation}
for some $\V$-functor $S:\J \rightarrow \uV$, since $\uV(j-,S-)$ is always a composite $j^* \otimes S_*$.
\end{DefSub}

\newpage

\begin{ExaSub}
\emptybox
\begin{description}[leftmargin=3.4ex]
\item[\textbf{1. Representable endo-profunctors on $\uV$.}] For the unrestricted system of arities with $\J = \uV$, a $\V$-profunctor $M:\uV \modto \uV$ is copresheaf-representable if and only if it is \textit{representable} in the usual sense, i.e., iff $M = T_* = \uV(-,T-)$ for some $\V$-functor $T:\uV \rightarrow \uV$. 
\item[\textbf{2. Objects of $\V$.}] The system of arities $\{I\} \hookrightarrow \uV$ is isomorphic to the system $\text{I} \rightarrowtail \uV$ with $* \mapsto I$, and $\V$-profunctors $\text{I} \modto \text{I}$ are just single objects of $\V$.  All of them are copresheaf-representable.
\end{description}
\end{ExaSub}

\begin{PropSub}\label{thm:charn_copr_rep}
A $\V$-profunctor $M:\J \modto \J$ is copresheaf-representable if and only if $M(-,-):\J^\op \otimes \J \rightarrow \uV$ preserves $\J$-cotensors in its first variable.  Writing
$$M_I := M(I,-):\J \longrightarrow \uV\;,$$
there is a canonical $\V$-natural transformation
\begin{equation}\label{eq:zeta}\zeta^M\;:\;M \Longrightarrow \uV(j-,M_I-)\end{equation}
such that $\zeta^M$ is an isomorphism if and only if $M$ is copresheaf-representable.
\end{PropSub}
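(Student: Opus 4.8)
The plan is to show that the following three conditions on $M:\J\modto\J$ are equivalent: \textbf{(a)} $M$ is copresheaf-representable; \textbf{(b)} for each $A\in\ob\J$ the $\V$-functor $M(-,A):\J^\op\to\uV$ preserves $\J$-cotensors (which is what ``$M$ preserves $\J$-cotensors in its first variable'' means); and \textbf{(c)} $\zeta^M$ is an isomorphism. Here (a)$\Leftrightarrow$(b) is the first assertion of the proposition and (a)$\Leftrightarrow$(c) yields the second. I would establish these by closing the cycle (a)$\Rightarrow$(b)$\Rightarrow$(c)$\Rightarrow$(a). Two of the three implications are straightforward. For (a)$\Rightarrow$(b): condition (b) is manifestly invariant under isomorphism of $\V$-functors, so it suffices to know that $\uV(j-,V):\J^\op\to\uV$ preserves $\J$-cotensors for every object $V$ of $\V$ --- which is precisely the observation in the proof of \bref{thm:vfunc_ind_by_des_jcots}, that $\uV(j-,V)$ is the composite $\J^\op\xrightarrow{j^\op}\uV^\op\xrightarrow{\uV(-,V)}\uV$ of two $\V$-functors each of which preserves $\J$-cotensors. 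For (c)$\Rightarrow$(a): if $\zeta^M$ is an isomorphism then $M\cong\uV(j-,M_I-)$, and $\uV(j-,M_I-)$ is by construction a composite $j^*\otimes(M_I)_*$, so $M$ is copresheaf-representable (with $S=M_I$).

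It remains to construct $\zeta^M$ and to prove (b)$\Rightarrow$(c). For the construction, recall from \bref{par:forming_cot_in_jth} that each object $J$ of $\J^\op$ is the cotensor $[J,I]$ of $I$ by $J$ in $\J^\op$, with counit $\gamma_J:J\to\J^\op(J,I)$ the canonical isomorphism $J\xrightarrow{\sim}\uV(I,J)$. For $J,A\in\ob\J$, I define $\zeta^M_{J,A}:M(J,A)\to\uV(jJ,M_IA)$ to be the morphism corresponding, under the canonical isomorphism $\uV(M(J,A),\uV(jJ,M_IA))\cong\uV(jJ,\uV(M(J,A),M_IA))$, to the composite
$$J\xrightarrow{\ \gamma_J\ }\J^\op(J,I)\longrightarrow\uV(M(J,A),M(I,A))=\uV(M(J,A),M_IA),$$
whose second arrow is induced by the $\V$-functoriality of $M$, acting on hom-objects with the second variable frozen at $A$. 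That the family $(\zeta^M_{J,A})$ constitutes a $\V$-natural transformation $\zeta^M:M\Rightarrow\uV(j-,M_I-)$ of $\V$-functors $\J^\op\otimes\J\to\uV$ is a routine verification from the $\V$-functoriality of $M$ together with the $\V$-naturality of the family $(\gamma_J)$, i.e. of the canonical isomorphism $1_{\uV}\cong\uV(I,-)$. (One may note in passing that if $M\cong\uV(j-,S-)$ then $M_I=M(I,-)\cong\uV(I,S-)\cong S$, so the representing $\V$-functor is essentially unique.)

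The crux is then the identification: for each fixed $A$, the morphism $\zeta^M_{J,A}$ is exactly the canonical comparison morphism $M(J,A)=M([J,I],A)\to[jJ,M(I,A)]=\uV(jJ,M_IA)$ testing whether $M(-,A):\J^\op\to\uV$ preserves the cotensor $[J,I]=J$ of $\J^\op$. This is immediate once one writes out the universal property of the cotensor $\uV(jJ,M_IA)=[jJ,M_IA]$ in $\uV$: a morphism from $M(J,A)$ into it is determined by its transpose $jJ\to\uV(M(J,A),M_IA)$, and the comparison morphism is by definition the one whose transpose is $M(-,A)$ applied to the cotensor counit of $[J,I]$ in $\J^\op$, namely $\gamma_J$ --- which is our defining formula. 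Hence $M(-,A)$ preserves the cotensor $[J,I]$ if and only if $\zeta^M_{J,A}$ is an isomorphism. Consequently, if (b) holds, then $\zeta^M_{J,A}$ is an isomorphism for all $J$ and $A$, so $\zeta^M$ is an isomorphism, which is (c). This closes the cycle (a)$\Rightarrow$(b)$\Rightarrow$(c)$\Rightarrow$(a), whence all three conditions are equivalent and the proposition follows.

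The step I expect to be the main obstacle is not conceptual but notational: the careful bookkeeping in the middle two paragraphs --- keeping track of variances (the first variable of a profunctor being contravariant) and of transpositions closely enough that $\zeta^M_{-,A}$ is \emph{visibly} the cotensor-preservation comparison morphism, and verifying $\V$-naturality of $\zeta^M$ simultaneously in its two arguments. These are routine enriched diagram chases, but delicate enough to warrant being spelled out in the actual write-up.
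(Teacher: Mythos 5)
Your proof is correct and follows essentially the same route as the paper's: the paper likewise identifies $\zeta^M$ with the family of cotensor-preservation comparison morphisms $M(-,K) \Rightarrow \uV(j-,M(I,K))$ and closes the loop using the fact that $\uV(j-,V)$ preserves $\J$-cotensors. The only difference is organizational: the paper obtains the comparison transformation and the equivalence of cotensor-preservation with invertibility of $\zeta^M$ by citing \bref{thm:charns_talgebras} and \bref{thm:charns_talgs} applied to the $\J$-theory $\J^\op$ (so that ``$M(-,K)$ is a $\J^\op$-algebra'' does the work), whereas you reconstruct the comparison morphism and its universal-property characterization by hand.
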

\begin{proof}
$M$ preserves $\J$-cotensors in its first variable iff $M(-,K):\J^\op \rightarrow \uV$ is a $\J^\op$-algebra for each object $K$ of $\J$, but by \bref{thm:charns_talgs} this is so iff the comparison transformations
$$M(-,K) \Longrightarrow \uV(j-,M(I,K))\;\;\;\;\;\;(K \in \J)$$
are isomorphisms.  The latter constitute a $\V$-natural transformation $\zeta^M$ of the needed form \eqref{eq:zeta}.  If $M$ is copresheaf-representable, with $M \cong \uV(j-,S-)$, then each $M(-,K) \cong \uV(j-,SK):\J^\op \rightarrow \uV$ is a $\J^\op$-algebra by \bref{thm:vfunc_ind_by_des_jcots}, so $\zeta^M$ is iso.  The converse implication is immediate.
\end{proof}

\begin{PropSub}\label{thm:comp_copr_rep}
Let $M,N:\J \modto \J$ be copresheaf-representable $\V$-profunctors for an eleutheric system of arities $j:\J \hookrightarrow \uV$.  Then there exists a composite $M \otimes N:\J \modto \J$, and this composite is copresheaf-representable.
\end{PropSub}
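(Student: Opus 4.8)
The plan is to pin down the $\V$-functors that represent $M$ and $N$, write down the evident candidate for the composite, and then use eleuthericity to verify the coend property.

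First, since $M$ and $N$ are copresheaf-representable I would fix $\V$-functors $S, R \colon \J \to \uV$ with $M \cong \uV(j-,S-)$ and $N \cong \uV(j-,R-)$; concretely one may take $S = M_I$ and $R = N_I$ by \bref{thm:charn_copr_rep}. Since a composite of $\V$-profunctors is unique up to isomorphism and copresheaf-representability is stable under isomorphism, it does no harm to prove the statement with $M$ and $N$ replaced by $\uV(j-,S-)$ and $\uV(j-,R-)$. Because $j$ is eleutheric, \bref{par:charn_ext_via_kan_extns} supplies the pointwise left Kan extension $\Lan_j S \colon \uV \to \uV$, given on objects by $(\Lan_j S)(V) = \y_j(V) \star S = \int^{L \in \J} \uV(jL,V)\otimes SL$, and moreover each of these colimits is $\J$-stable, i.e.\ preserved by every $\V$-functor $\uV(K,-)\colon \uV \to \uV$ with $K \in \ob\J$ --- this being exactly the $\J$-flatness of the weights $\y_j(V) \in \PhiJ$. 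I would then set $U := (\Lan_j S)\circ R \colon \J \to \uV$ and propose the $\V$-profunctor $\uV(j-,U-) \colon \J^\op \otimes \J \to \uV$ as the composite $M \otimes N$; this is copresheaf-representable by definition, so it remains only to exhibit it as a coend of $M$ and $N$.

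The key computation is the following. Fixing $K, J \in \ob\J$, the coend we must produce is
$$\int^{L \in \J} M(K,L)\otimes N(L,J) \;=\; \int^{L \in \J} \uV(jK,SL)\otimes \uV(jL,RJ)\;,$$
which, read as a weighted colimit, is $\y_j(RJ) \star \bigl(\uV(jK,-)\circ S\bigr)$: the colimit of the $\V$-functor $M(K,-) = \uV(jK,-)\circ S \colon \J \to \uV$ weighted by $\y_j(RJ) \colon \J^\op \to \uV$. Since $\uV(jK,-)$ preserves the colimit $(\Lan_j S)(RJ) = \y_j(RJ)\star S$ by $\J$-flatness of $\y_j(RJ)$, this weighted colimit exists and the canonical comparison gives
$$\y_j(RJ)\star\bigl(\uV(jK,-)\circ S\bigr)\;\cong\;\uV\bigl(jK,\;\y_j(RJ)\star S\bigr)\;=\;\uV\bigl(jK,(\Lan_j S)(RJ)\bigr)\;=\;\uV(jK,UJ)\;.$$
Thus the coend $(M\otimes N)(K,J)$ exists and is canonically isomorphic to $\uV(jK,UJ)$. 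The coprojections exhibiting this are obtained by applying $\uV(jK,-)$ to the colimit cylinder for $(\Lan_j S)(RJ)$ and precomposing with the canonical maps $\uV(jK,SL)\otimes\uV(jL,RJ) \to \uV\bigl(jK,\,\uV(jL,RJ)\otimes SL\bigr)$; I would then check that these isomorphisms are $\V$-natural in $K$ and $J$, so that $\uV(j-,U-)$, equipped with these coprojections, is genuinely a composite $M \otimes N$ in the sense of \bref{par:prof}, and hence a copresheaf-representable composite.

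The conceptual heart is short: eleuthericity provides a left Kan extension along $j$ that is stable under cotensoring by arities, and it is precisely this stability that licenses moving the representable $\uV(jK,-)$ inside the defining coend. The only real obstacle is bookkeeping --- keeping the two variances straight (covariant through $S$, contravariant through $j$) and verifying that the pointwise isomorphisms assemble $\V$-naturally in both variables, so that one obtains an honest profunctor composite rather than a mere family of object-wise isomorphisms; this is a routine enriched-naturality check and is the one place where care is genuinely needed.
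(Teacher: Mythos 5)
Your proposal is correct and follows essentially the same route as the paper: both take the candidate composite to be $\uV(j-,(\Lan_j M_I)(N_I-))$ and use the $\J$-flatness of the weights $\y_j(V)$ (i.e.\ eleuthericity) to pull the representable $\uV(jK,-)$ through the defining coend of $\Lan_j M_I$, which simultaneously yields existence of the coend and the desired isomorphism. The only difference is presentational — you phrase the coend explicitly as the weighted colimit $\y_j(RJ)\star(\uV(jK,-)\circ S)$ and flag the naturality bookkeeping, which the paper leaves implicit.
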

\begin{proof}
Letting $M^\sharp_I := \Lan_j M_I:\uV \rightarrow \uV$ denote the left Kan extension of $M_I = M(I,-):\J \rightarrow \uV$ along $j$, we claim that the copresheaf-representable $\V$-profunctor $\uV(j-,M^\sharp_I N_I-)$ is a composite $M \otimes N$.  Indeed, we can employ the assumption that $j$ is eleutheric to compute that 
$$
\begin{array}{lll}
\uV(j(J),M^\sharp_I N_I L) & =     & \uV(J,\int^{K \in \J}\uV(K,N_I L)\otimes M_I K)  \\
                           & \cong & \int^{K \in \J}\uV(K,N_I L) \otimes \uV(J,M_I K) \\
                           & \cong & \int^{K \in \J}M(J,K) \otimes N(K,L)
\end{array}
$$
$\V$-naturally in $J,L \in \J$, and that, in particular, the coends on the second and third lines exist.
\end{proof}

\begin{CorSub}\label{thm:bic_copr_rep_prof}
Given an eleutheric system of arities $j:\J \hookrightarrow \uV$, the copresheaf-representable $\V$-profunctors $M:\J \modto \J$ are the 1-cells of a bicategory with just one object, namely $\J$.
\end{CorSub}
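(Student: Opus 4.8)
The plan is to exhibit the bicategory directly, letting this section's results supply the composites and their universal properties. The single object is $\J$; the $1$-cells $\J \modto \J$ are the copresheaf-representable $\V$-profunctors, and the $2$-cells are $\V$-natural transformations between them. Thus the hom $\Hom(\J,\J)$ is an ordinary category, namely a full subcategory of the category of $\V$-functors $\J^\op \otimes \J \to \uV$ and $\V$-natural transformations; note that no enriched functor $\V$-category is required even though one need not exist.

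First I would assemble the three structural ingredients. Horizontal composition on $1$-cells is supplied by \bref{thm:comp_copr_rep}: for copresheaf-representable $M,N:\J\modto\J$ a composite $M \otimes N$ exists and is again copresheaf-representable. Functoriality on $2$-cells is then routine: given $\V$-natural $\alpha:M \Rightarrow M'$ and $\beta:N\Rightarrow N'$ with all four profunctors copresheaf-representable, the componentwise morphisms determined by $\alpha$ and $\beta$ form a cocone on the coend presenting $(N\otimes M)(L,K)=\int^{J}N(L,J)\otimes M(J,K)$, hence induce a unique $\V$-natural transformation $N\otimes M \Rightarrow N'\otimes M'$, and the uniqueness clause makes this assignment respect composition and identities of $2$-cells. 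Finally, the identity $1$-cell at $\J$ is the hom-profunctor $\J(-,-)\cong\uV(j-,j-):\J^\op\otimes\J\to\uV$, which is copresheaf-representable, being of the form $\uV(j-,S-)$ with $S=j$.

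Next I would produce the coherence isomorphisms. For the associator, two applications of \bref{thm:comp_copr_rep} show that for composable copresheaf-representable $M,N,P$ both $(M\otimes N)\otimes P$ and $M\otimes(N\otimes P)$ exist and are copresheaf-representable, and the associator is the canonical Fubini isomorphism between the iterated coends --- all the coends in play exist, exactly by \bref{thm:comp_copr_rep}. For the unitors, $M\otimes\J(-,-)\cong M\cong\J(-,-)\otimes M$ are the co-Yoneda (density) isomorphisms, the relevant coends existing again by \bref{thm:comp_copr_rep} (or simply because co-Yoneda coends are absolute). These comparisons are $\V$-natural and invertible by construction.

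The remaining point is the pentagon and triangle, and this is where I would be most deliberate, though I anticipate no genuine difficulty. One may argue directly: each axiom is an equation between two morphisms out of an iterated coend, and follows from the uniqueness part of the relevant universal property together with standard Fubini/Yoneda bookkeeping. Alternatively --- and this I find cleaner --- choose an enlargement $\V'$ of $\V$ \pbref{par:enl} for which $\J$ is $\U$-small; post-composition with $\uV\hookrightarrow\underline{\V'}$ carries each copresheaf-representable $\V$-profunctor to a $\V'$-profunctor $\J\modto\J$, and since $\uV\hookrightarrow\underline{\V'}$ preserves the $\U$-small colimits existing in $\V$, the composites of \bref{thm:comp_copr_rep} and the associators and unitors above are computed by the corresponding structure of the bicategory of $\V'$-profunctors among $\U$-small $\V'$-categories; as $\V\hookrightarrow\V'$ is faithful, the pentagon and triangle for our data are inherited from that bicategory. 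Either way the bicategory axioms hold. The one obstacle, such as it is, is purely bookkeeping: keeping track at each stage of which coends must exist in order that the profunctor-bicategory structure genuinely restrict to the copresheaf-representable $1$-cells, and every such existence is guaranteed by \bref{thm:comp_copr_rep}.
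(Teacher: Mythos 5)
Your proof is correct and takes essentially the same route as the paper, whose entire argument is the one-line observation that the identity profunctor $\J(-,-)=\uV(j-,j-)$ is copresheaf-representable (with $S=j$) so that the result follows from \bref{thm:comp_copr_rep}; you have simply spelled out the coherence bookkeeping that the paper treats as routine. Your ``cleaner'' alternative of passing to an enlargement $\V'$ and inheriting the bicategory axioms from the bicategory of $\V'$-profunctors is in fact exactly the device the paper itself uses later, in the proof of \bref{thm:th_as_mnd_crprofj}, where $\CRProfJ$ is exhibited as a locally full sub-bicategory of $\eProf{\V'}$.
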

\begin{proof}
Since the identity $\V$-profunctor $\J(-,-) = \uV(j-,j-)$ is co\-pre\-sheaf-re\-pre\-sent\-able, this follows from \bref{thm:comp_copr_rep}.
\end{proof}

\begin{DefSub}\label{def:crprofj}
We denote by $\CRProfJ$ the one-object \textbf{bicategory of copresheaf-representable} $\V$-\textbf{profunctors} of \bref{thm:bic_copr_rep_prof}.  Note that $\CRProfJ$ can equally be viewed as a monoidal category whose objects are copresheaf-representable $\V$-profunctors.
\end{DefSub}

\section{Theories as monads in a bicategory of profunctors}\label{sec:th_mon_bicat}

Letting $j:\J \hookrightarrow \uV$ be an eleutheric system of arities, we show herein that $\J$-theories are the same as monads in the bicategory of copresheaf-representable $\V$-profunctors on $\J$.  The idea of describing algebraic theories as certain profunctor monads was pursued in \cite[Ch. V]{JohWr} for internal algebraic theories in a topos, and a closely related description of the enriched theories of Borceux and Day is given in \cite[2.6.1]{BoDay}.  In the present context, we shall require some basic theory on $\V$-profunctor monads, as follows.

\begin{ParSub}
To begin, let us recall that commutative $K$-algebras for a commutative ring $K$ can be defined as commutative monoids $A$ in the symmetric monoidal category of $K$-modules or, equivalently, as commutative rings $A$ equipped with a ring homomorphism $K \rightarrow A$.  It is well-known that this fact has a non-commutative analogue for a given monoid $K$ in an arbitrary monoidal category $\C$ with reflexive coequalizers that are preserved by $\otimes$ in each variable.  In this context, the category $\Bimod_\C(K)$ of $K$-bimodules is a monoidal category when we define the monoidal product $M \otimes_K N$ of a pair of $K$-bimodules $M,N$ as the coequalizer of the reflexive pair $\alpha \otimes N, M \otimes \beta:M \otimes K \otimes N \rightarrow M \otimes N$ where $\alpha$ and $\beta$ are the right and left actions carried by $M$ and $N$, respectively.  The general result is then as follows and is straightforward to prove; cf. \cite[3.7]{Ch:DistLLTh}.
\end{ParSub}

\begin{PropSub}\label{thm:two_descns_of_algs_over_a_monoid}
Given a monoid $K$ in a monoidal category $\C$ with reflexive coequalizers that are preserved by $\otimes$ in each variable, there is an isomorphism 
$$\Mon(\Bimod_\C(K)) \;\cong\; K \slash \:\Mon(\C)$$
between the category $\Mon(\Bimod_\C(K))$ of monoids in $\Bimod_\C(K)$ and the coslice category under $K$ in the category $\Mon(\C)$ of monoids in $\C$.
\end{PropSub}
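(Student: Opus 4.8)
The plan is to realize the claimed isomorphism as the functor induced on categories of monoids by the forgetful functor $U \colon \Bimod_\C(K) \to \C$, suitably packaged as a lax monoidal functor, and then to check invertibility by exhibiting an explicit inverse. First I would record the lax monoidal structure on $U$. Writing $q_{M,N} \colon M \otimes N \to M \otimes_K N$ for the coequalizing quotient (natural in the $K$-bimodules $M,N$) and $e_K \colon I \to K$ for the unit of $K$, the data $\phi_{M,N} := q_{M,N} \colon UM \otimes UN \to U(M \otimes_K N)$ and $\phi_0 := e_K \colon I \to UK$ form a lax monoidal structure on $U$: the required associativity and unit coherence diagrams are exactly the images under $U$ of the constraints used in building the monoidal structure $\otimes_K$ on $\Bimod_\C(K)$, and hold for the same reason, namely that $\otimes$ preserves the defining reflexive coequalizers. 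Hence $U$ induces a functor $\Mon(U) \colon \Mon(\Bimod_\C(K)) \to \Mon(\C)$. Moreover $U$ carries the unit object $K$ of $\Bimod_\C(K)$, with its canonical (trivial) monoid structure, to $K$ with its \emph{given} monoid structure in $\C$: the transported multiplication is $K \otimes K \xrightarrow{q_{K,K}} K \otimes_K K \xrightarrow{\sim} K$, which is just $m_K$ since $K \otimes_K K$ is by definition a coequalizer of the pair $(m_K \otimes 1,\,1 \otimes m_K)$ and $m_K$ coequalizes it; and the transported unit is $e_K$. Since the unit object of any monoidal category is the initial monoid, this shows that $\Mon(U)$ lifts to a functor
$$\widetilde U \colon \Mon(\Bimod_\C(K)) \longrightarrow K \slash \Mon(\C), \qquad (M,\mu_M,\eta_M) \longmapsto \bigl(UM,\ \eta_M \colon K \to UM\bigr),$$
where $\eta_M \colon K \to M$ is both the unit of the monoid $M$ and the unique monoid morphism out of the initial monoid.

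Next I would construct an inverse $\Psi$. Given a monoid homomorphism $f \colon K \to A$ in $\C$, endow $A$ with the $K$-bimodule structure whose left and right actions are $m_A \circ (f \otimes 1)$ and $m_A \circ (1 \otimes f)$; associativity, unitality, and commutation of the two actions are immediate from the monoid axioms for $A$. Since $m_A$ coequalizes the reflexive pair $\rho_A \otimes 1,\,1 \otimes \lambda_A \colon A \otimes K \otimes A \rightrightarrows A \otimes A$ whose coequalizer is $q_{A,A}$ (this is just associativity of $m_A$, using $f$), there is a unique $\mu \colon A \otimes_K A \to A$ with $\mu \circ q_{A,A} = m_A$; one then checks that $\mu$ is a $K$-bimodule map and that $(A,\mu,f)$ is a monoid in $\Bimod_\C(K)$, all identities descending from the monoid axioms for $A$ along the epimorphisms $q_{A,A}$ and their iterates. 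Setting $\Psi(A,f) := (A,\mu,f)$, and acting on morphisms in the evident way, gives a functor $\Psi \colon K \slash \Mon(\C) \to \Mon(\Bimod_\C(K))$. (On hom-sets the two categories visibly agree: a map $g$ of monoids in $\Bimod_\C(K)$ between such objects is a $\C$-morphism which is $K$-bilinear, commutes with the $\mu$'s, and satisfies $g f = f'$; precomposing the $\mu$-condition with $q_{A,A}$ turns it into "$g$ is a $\C$-monoid homomorphism," and conversely a $\C$-monoid homomorphism under $K$ is automatically $K$-bilinear — so the hom-sets are literally equal.)

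Finally I would verify $\widetilde U \Psi = \mathrm{id}$ and $\Psi \widetilde U = \mathrm{id}$. Both underlying objects are preserved on the nose. We have $\widetilde U \Psi(A,f) = (A,f)$ since the $U$-transported multiplication of $(A,\mu,f)$ is $\mu \circ q_{A,A} = m_A$ and its $U$-transported unit is $f \circ e_K = e_A$ (as $f$ is a monoid homomorphism). And $\Psi \widetilde U(M) = (M,\mu_M,\eta_M)$: the left/right $K$-actions reconstructed from $\eta_M$ coincide with the original bimodule actions of $M$ — this is where the unit laws for the monoid $M$ in $\Bimod_\C(K)$ together with the definition of the unitors of $\otimes_K$ (which identify $K \otimes_K M \cong M \cong M \otimes_K K$) are used — and $\mu_M$ is the unique descent along $q_{M,M}$ of $U\mu_M \circ q_{M,M}$, so it is recovered by $\Psi$. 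Functoriality and the action on morphisms are immediate from the hom-set identification above.

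I expect the one genuinely fiddly point — the \emph{main obstacle} — to be checking the lax monoidal coherence for $U$, i.e. that the quotient maps $q_{M,N}$ are compatible with reassociation and the unitors; but this is not really new labour, since it is the same interplay between the reflexive coequalizers $M \otimes N \to M \otimes_K N$ and the bifunctor $\otimes$ that underlies the construction of $\otimes_K$ itself, which has already been invoked. Everything else is a routine diagram chase with the monoid axioms and the fact that the quotient maps (and their tensor products and composites) are epimorphisms.
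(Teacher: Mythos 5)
Your argument is correct and complete. Note that the paper itself gives no proof of this proposition: it declares the result ``straightforward to prove'' and defers to \cite[3.7]{Ch:DistLLTh}, so there is no in-text argument to compare against. Your route --- packaging the forgetful functor $U\colon\Bimod_\C(K)\to\C$ as a lax monoidal functor via the quotient maps $q_{M,N}$ and the unit $e_K$, observing that $\Mon(U)$ sends the unit object $K$ (the initial monoid of $\Bimod_\C(K)$) to the given monoid $K$ so that $\Mon(U)$ lifts to the coslice, and then inverting by the standard $f\colon K\to A\ \mapsto$ ``$A$ with actions $m_A(f\otimes 1)$ and $m_A(1\otimes f)$'' construction --- is exactly the standard way to establish this, and all the key verifications are present: that $m_A$ coequalizes the defining reflexive pair (associativity of $m_A$ through $f$), that the hom-sets literally coincide via precomposition with the epimorphisms $q_{A,A}$, and that the two composites are identities on the nose (so one genuinely gets an isomorphism of categories, as the statement asserts, not merely an equivalence). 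The one step worth a word more on paper is the identification, in checking $\Psi\widetilde{U}=\mathrm{id}$, of the reconstructed actions with the original ones: this uses that the unitors of $\otimes_K$ are by construction the descents of the actions along $q_{K,M}$ and $q_{M,K}$, which you do invoke, so nothing is missing. Your closing assessment is also accurate: the lax monoidal coherence for $(U,q,e_K)$ is the same computation with $\otimes$-preserved reflexive coequalizers that one already performs to make $\otimes_K$ associative and unital, so no genuinely new work is hidden there.
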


\begin{ExaSub}
If $\V$ is cocomplete, then given any small set $S$ we can take $\C = \VProf(S,S)$ to be the monoidal category of all $\V$-profunctors $S \modto S$ on the discrete $\V$-category $S$.  Writing $\VCAT(S)$ for the category whose objects are $\V$-categories $\K$ with $\ob\K = S$, with identity-on-objects $\V$-functors as morphisms, it is well-known that $\Mon(\C) \cong \VCAT(S)$.  It is equally well-known that the monoidal category $\Bimod_\C(\K)$ of bimodules for a monoid $\K$ in $\C$ is isomorphic to the monoidal category $\VProf(\K,\K)$ of $\V$-profunctors $\K \modto \K$.  Therefore \bref{thm:two_descns_of_algs_over_a_monoid} entails that monads on $\K$ in $\VProf$ are equivalently described as $\V$-categories $\A$ with an identity-on-objects $\V$-functor $\K \rightarrow \A$, as captured by the following $\V$-enriched variant of a result of Justesen \cite[p. 202]{JohWr}:
\end{ExaSub}

\begin{CorSub}\label{thm:vprof_mnds}
Suppose that $\V$ is cocomplete, and let $\K$ be a small $\V$-category.  Then we have an isomorphism
$$\Mnd_{\VProf}(\K) \;\cong\; \K \slash \:\VCAT(\ob\K)$$
between the category of monads on $\K$ in $\VProf$ and the coslice category under $\K$ in $\VCAT(\ob\K)$. Given an object $(\A,E:\K \rightarrow \A)$ of the given coslice category, the associated $\V$-profunctor is $\A(E-,E-):\K \modto \K$.
\end{CorSub}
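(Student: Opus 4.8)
The plan is to obtain this as a direct consequence of Proposition~\ref{thm:two_descns_of_algs_over_a_monoid}, in the manner outlined in the preceding example. Write $S = \ob\K$ and regard $S$ as a discrete $\V$-category. Since $\V$ is cocomplete, $\C := \VProf(S,S)$ is a monoidal category, composition of profunctors being computed by the $S$-indexed coproducts $(Q \otimes P)(t,s) = \coprod_{u \in S} Q(t,u) \otimes P(u,s)$. First I would verify the hypotheses of Proposition~\ref{thm:two_descns_of_algs_over_a_monoid} for $\C$: colimits in $\C$ are computed pointwise in $\V$, so $\C$ has all colimits and in particular reflexive coequalizers; and since profunctor composition is a coproduct of instances of $\otimes$ in $\V$, which is cocontinuous in each variable (as $\V$ is monoidal closed), and coproducts commute with colimits, the endofunctors $Q \otimes (-)$ and $(-) \otimes P$ of $\C$ preserve all colimits, in particular reflexive coequalizers. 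Thus Proposition~\ref{thm:two_descns_of_algs_over_a_monoid} applies to every monoid in $\C$.

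Next I would record the two standard identifications underlying the example. Unwinding definitions, a monoid in $\C$ is exactly a $\V$-category structure with object-set $S$, and a morphism of such monoids is exactly an identity-on-objects $\V$-functor; this gives an isomorphism of categories $\Mon(\C) \cong \VCAT(S)$ sending the monoid $\K$ to the $\V$-category $\K$. Likewise, for the monoid $\K$ in $\C$, a $\K$-bimodule in $\C$ amounts precisely to a $\V$-profunctor $\K \modto \K$, the left and right $\K$-actions encoding the contravariant and covariant hom-actions of $\K$; and under this identification the bimodule tensor product $M \otimes_\K N$ of Proposition~\ref{thm:two_descns_of_algs_over_a_monoid}, being the reflexive coequalizer of the pair $M \otimes \K \otimes N \rightrightarrows M \otimes N$, is carried to the profunctor composite $M \otimes N$, since that coequalizer computes the coend $\int^{k \in \K} M(-,k) \otimes N(k,-)$. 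Hence $\Bimod_\C(\K) \cong \VProf(\K,\K)$ as monoidal categories.

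Combining these, I would conclude $\Mnd_{\VProf}(\K) = \Mon(\VProf(\K,\K)) \cong \Mon(\Bimod_\C(\K)) \cong \K \slash \:\Mon(\C) \cong \K \slash \:\VCAT(\ob\K)$, where the middle isomorphism is Proposition~\ref{thm:two_descns_of_algs_over_a_monoid}. Finally I would trace the composite isomorphism on objects: Proposition~\ref{thm:two_descns_of_algs_over_a_monoid} sends a monoid $M$ in $\Bimod_\C(\K)$ to its underlying monoid in $\C$ together with the canonical monoid homomorphism $\K \to M$ induced by the bimodule unit. Translating through the two identifications, a monad $M$ on $\K$ in $\VProf$ is sent to the pair $(\A, E \colon \K \to \A)$ in which $\A$ is the $\V$-category with $\ob\A = S$ corresponding to the underlying monoid of $M$ and $E$ is the induced identity-on-objects $\V$-functor; conversely, starting from $(\A, E)$, the corresponding $\V$-profunctor $\K \modto \K$ is $\A$ viewed as a profunctor on $S$ with its $\K$-actions taken along $E$, which is exactly $\A(E-,E-)$, as asserted.

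I expect the only genuine work to lie in the two identifications flagged as ``well-known'' --- most delicately the monoidal one, matching the bimodule tensor $\otimes_\K$ with profunctor composition --- together with keeping track of the variances dictated by the profunctor conventions of \ref{par:prof}; once these are in hand, the statement is a formal corollary of Proposition~\ref{thm:two_descns_of_algs_over_a_monoid}. No passage to an enlargement of $\V$ is needed here, since $\K$ is small and $\V$ is cocomplete.
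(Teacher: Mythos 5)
Your proposal is correct and follows essentially the same route as the paper: the paper likewise deduces the corollary from Proposition \ref{thm:two_descns_of_algs_over_a_monoid} via the chain $\K \slash \:\VCAT(\ob\K) \cong \K \slash \:\Mon(\VProf(\ob\K,\ob\K)) \cong \Mon(\VProf(\K,\K)) = \Mnd_{\VProf}(\K)$, taking the identifications $\Mon(\C) \cong \VCAT(S)$ and $\Bimod_\C(\K) \cong \VProf(\K,\K)$ as well-known. You simply spell out in more detail the verification of the hypotheses of Proposition \ref{thm:two_descns_of_algs_over_a_monoid} and the object-level tracing that yields $\A(E-,E-)$, which the paper leaves implicit.
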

\begin{proof}
$\K \slash \:\VCAT(\ob\K) \cong \K \slash \:\Mon(\VProf(\ob\K,\ob\K)) \cong \Mon(\VProf(\K,\K))$ $=$ $\Mnd_{\VProf}(\K)$.
\end{proof}

\begin{ThmSub}\label{thm:th_as_mnd_crprofj}
For an eleutheric system of arities $j:\J \hookrightarrow \uV$, there is an isomorphism
$$\ThJ \;\cong\; \Mnd_{\CRProfJ}(\J)$$
between the category $\ThJ$ of $\J$-theories and the category $\Mnd_{\CRProfJ}(\J)$ of monads on $\J$ in the bicategory $\CRProfJ$ of copresheaf-representable $\V$-profunctors.
\end{ThmSub}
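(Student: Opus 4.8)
The plan is to construct the claimed isomorphism of categories explicitly, both on objects and on morphisms. The only substantive inputs will be the characterization of copresheaf-representable profunctors in \bref{thm:charn_copr_rep} and the fact that, since $j$ is eleutheric, $\CRProfJ$ is a genuine bicategory \pbref{thm:comp_copr_rep}, \pbref{thm:bic_copr_rep_prof}, so that $\CRProfJ(\J,\J)$ \pbref{def:crprofj} is an honest monoidal category and ``monad on $\J$ in $\CRProfJ$'' means ``monoid in $\CRProfJ(\J,\J)$'' \pbref{par:vmonads_and_vmonadicity}. The remainder is the standard identification of a $\V$-category with a prescribed object set with a monoid in a suitable monoidal category of profunctors on that set, in the spirit of \bref{thm:vprof_mnds} and \bref{thm:two_descns_of_algs_over_a_monoid}, carried out by hand here because $\J$ need not be small and $\V$ need not be cocomplete.

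Given a $\J$-theory $(\T,\tau)$, since $\tau\colon\J^\op\to\T$ is identity-on-objects we have $\ob\T=\ob\J$, and I would define a $\V$-profunctor $M_\T\colon\J\modto\J$ by $M_\T(J,K):=\T(K,J)$ --- that is, $M_\T$ is the hom-$\V$-profunctor of $\T^\op$ restricted along $\tau$. Composition in $\T$ furnishes a multiplication $\mu\colon M_\T\otimes M_\T\Rightarrow M_\T$ and the action of $\tau$ on homs furnishes a unit $\eta\colon\J(-,-)\Rightarrow M_\T$, and the monoid axioms for $(M_\T,\mu,\eta)$ are precisely the associativity and unit laws of $\T$ together with the $\V$-functoriality of $\tau$. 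The point at which the hypotheses enter is that $M_\T$ must lie in $\CRProfJ$: by \bref{thm:charn_copr_rep}, $M_\T$ is copresheaf-representable iff $M_\T(-,X)\colon\J^\op\to\uV$ is a $\J^\op$-algebra for each $X\in\ob\J$, and since $M_\T(-,X)=\T(X,\tau-)$ this holds for all $X$ exactly because $\tau$ preserves $\J$-cotensors (using that $\J^\op$ already has all $\J$-cotensors, \bref{par:forming_cot_in_jth}).

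Conversely, from a monoid $(M,\mu,\eta)$ in $\CRProfJ(\J,\J)$ I would build a $\V$-category $\T_M$ with $\ob\T_M=\ob\J$ and $\T_M(J,K):=M(K,J)$, with composition obtained from $\mu$ via the coprojections into the coends $(M\otimes M)(-,-)$ (which exist by \bref{thm:comp_copr_rep}) and identities obtained from $\eta$; the $\V$-category axioms are again just the monoid axioms, in the usual way. Taking $\tau_M\colon\J^\op\to\T_M$ to be the identity on objects with action on homs given by $\eta$, the unit law makes $\tau_M$ a $\V$-functor, and \bref{thm:charn_copr_rep} applied to the copresheaf-representable $M$ shows exactly that $\tau_M$ preserves $\J$-cotensors, so $(\T_M,\tau_M)$ is a $\J$-theory. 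These two assignments are visibly mutually inverse on the nose --- e.g.\ $\T_{M_\T}(J,K)=M_\T(K,J)=\T(J,K)$ with matching composition, identities and structure functor --- so they give inverse bijections between objects. On morphisms, a morphism of $\J$-theories $A\colon\T\to\U$ satisfies $A\tau=\upsilon$ and hence is identity-on-objects, so its components $A_{K,J}\colon\T(K,J)\to\U(K,J)$ form a $\V$-natural transformation $M_\T\Rightarrow M_\U$; compatibility with $\mu$ is the compositionality of $A$ and compatibility with $\eta$ is the equation $A\tau=\upsilon$, so this is a morphism of monoids, and the construction is plainly functorial and inverse to the evident one in the other direction. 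Hence $\ThJ\cong\Mnd_{\CRProfJ}(\J)$.

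I expect the main obstacle to be organizational rather than conceptual. The genuine mathematical content is already isolated in \bref{thm:charn_copr_rep} (which is what converts copresheaf-representability into $\J$-cotensor-preservation of the structure functor) and in the well-definedness of $\CRProfJ$, which is where eleuthericity is used; everything else is a routine but somewhat lengthy diagram chase verifying that a monoid structure on $M_\T$ encodes exactly the $\V$-category structure of $\T$ and the $\V$-functoriality of $\tau$. The one place where care is genuinely needed is the variance bookkeeping: the hom-$\V$-profunctor of $\T$ runs in the direction opposite to what a $\J$-theory's structure functor $\tau\colon\J^\op\to\T$ requires, so one must consistently work with $\T^\op$ and $\tau^\op$. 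As an alternative to the hand-built ``monoid $=$ $\V$-category over $\ob\J$'' step, one could pass to a cocomplete enlargement $\V'$ of $\V$ in which $\J$ is small \pbref{par:enl} and invoke \bref{thm:vprof_mnds} there, afterwards cutting down to the copresheaf-representable profunctors; but the direct construction seems cleaner and avoids the detour.
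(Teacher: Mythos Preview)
Your proposal is correct and takes a genuinely different route from the paper. The paper does precisely what you describe as the ``alternative'': it passes to a $\U$-cocomplete enlargement $\V'$ of $\V$ in which $\J$ is $\U$-small \pbref{par:enl}, so that $\CRProfJ$ becomes a locally full sub-bicategory of $\eProf{\V'}$, and then invokes \bref{thm:vprof_mnds} to identify $\Mnd_{\eProf{\V'}}(\J)$ with the coslice $\J \slash \eCAT{\V'}(\ob\J)$. Both $\ThJ$ and $\Mnd_{\CRProfJ}(\J)$ then embed fully and injectively-on-objects into this coslice, and the paper finishes by showing the two images coincide---using exactly the argument you give (representables jointly reflect cotensors, so $\tau$ preserves $\J$-cotensors iff each $\T(K,\tau-)$ does, iff the associated profunctor is copresheaf-representable by \bref{thm:charn_copr_rep}). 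Your direct construction avoids the enlargement machinery entirely at the cost of re-verifying by hand that ``monoid in profunctors on $\ob\J$'' equals ``$\V$-category with object set $\ob\J$,'' which is routine but not free; the paper's approach buys modularity (it reuses \bref{thm:two_descns_of_algs_over_a_monoid}/\bref{thm:vprof_mnds} wholesale) at the cost of the enlargement detour. The substantive mathematical core---the equivalence between copresheaf-representability and $\J$-cotensor-preservation of $\tau$---is the same in both.
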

\begin{proof}
Let $\V'$ be a $\U$-cocomplete enlargement of $\V$ \pbref{par:enl} such that $\J$ is $\U$-small, and write $\B = \eProf{\V'}$ for the bicategory of $\V'$-profunctors among $\U$-small $\V'$-categories.  Then since $\V \hookrightarrow \V'$ sends $\U$-small $\V$-enriched coends to $\V'$-enriched coends, the bicategory $\C = \CRProfJ$ is (w.l.o.g.) a locally full sub-bicategory of $\B$.  Therefore $\Mnd_\C(\J)$ is a full subcategory of $\Mnd_\B(\J)$.  But by \bref{thm:vprof_mnds}, we know that $\Mnd_\B(\J) \cong \X$ where $\X = \J \slash \:\eCAT{\V'}(\ob\J)$, and so this isomorphism restricts to a full embedding $\Mnd_\C(\J) \rightarrowtail \X$.  On the other hand we also have a full embedding $\ThJ \rightarrowtail \X$ given by $(\T,\tau) \mapsto (\T^\op,\tau^\op)$.  These two embeddings are both injective on objects, so it suffices to show that they have the same image.  Given an object $(\A,E)$ of $\X$, it suffices to show that the associated $\V'$-profunctor $\A(E-,E-):\J \modto \J$ is a copresheaf-representable $\V$-profunctor if and only if $(\A^\op,E^\op)$ is a $\J$-theory.  We may assume that $\A$ is a $\V$-category.  Now $(\A^\op,E^\op)$ is a $\J$-theory iff $E^\op:\J^\op \rightarrow \A^\op$ preserves $\J$-cotensors, but since the $\V$-functors
$$\A(-,K) = \A^\op(K,-):\A^\op \rightarrow \uV\;\;\;\;\;K \in \ob\A = \ob\J$$
preserve and jointly reflect cotensors, this is equivalent to the assertion that
each of the $\V$-functors
$$\A(E-,EK) = \A(E-,K):\J^\op \rightarrow \uV\;\;\;\;\;K \in \ob\A = \ob\J$$
preserves $\J$-cotensors, equivalently, that the $\V$-profunctor $\A(E-,E-)$ is copresheaf-representable.
\end{proof}

\section{Equivalence between \texorpdfstring{$\J$}{J}-theories and \texorpdfstring{$\J$}{J}-ary monads}

Let $j:\J \hookrightarrow \uV$ be an eleutheric system of arities.

\begin{DefSub}
A $\J$-\textbf{ary} $\V$-\textbf{functor} is a $\V$-functor that preserves left Kan extensions along $j:\J \hookrightarrow \uV$.
\end{DefSub}

\begin{RemSub}\label{rem:jary_phi_cocts}
Note that $\J$-ary $\V$-functors are the same as $\PhiJ$-cocontinuous $\V$-functors \pbref{def:eleutheric}.  Let us denote by $\Cocts{\PhiJ}$ the locally full sub-2-category of $\VCAT$ with 1-cells all $\J$-ary $\V$-functors between $\PhiJ$-cocomplete $\V$-categories.
\end{RemSub}

\begin{ExaSub}\label{exa:jary_endofunctors}
\emptybox
\begin{description}[leftmargin=3.4ex]
\item[1. Finitary endofunctors.]  Letting $\V$ be locally finitely presentable as a closed category, the $\J$-ary $\V$-functors $T:\uV \rightarrow \uV$ for the system of arities $\J = \V_{fp} \hookrightarrow \uV$ are exactly those $\V$-functors that are \textit{finitary} in the sense employed in \cite{Ke:FL}.  Indeed, by \cite[7.6]{Ke:FL}, $T$ is finitary if and only if $T$ is a left Kan extension along $j:\Vfp \hookrightarrow \uV$, so \bref{thm:jary_charns} yields the needed equivalence.  In particular, in the classical case where $\V = \Set$ and $\V_{fp} = \FinSet$, the $\FinSet$-ary endofunctors on $\Set$ are exactly the finitary endofunctors in the usual sense.  Hence we find that the $\FinCard$-ary endofunctors on $\Set$ are precisely the finitary endofunctors as well.
\item[2. Unrestricted arities, arbitrary endofunctors.] For the system of arities $j = 1_{\uV}:\J = \uV \rightarrow \uV$, every $\V$-functor $T:\uV \rightarrow \uV$ is a $\uV$-ary $\V$-functor.  Indeed, given any $S:\J = \uV \rightarrow \uV$, the identity morphism $1_S:S \Rightarrow Sj$ exhibits $S$ as a left Kan extension of $S$ along $j$, and this left Kan extension is clearly preserved by $T$.
\end{description}
\end{ExaSub}

Our next objective is to show that the one-object 2-category of $\J$-ary endo-$\V$-functors on $\uV$ is equivalent to the one-object bicategory $\CRProfJ$ of copresheaf-representable $\V$-profunctors on $\J$, equivalently, that $\Cocts{\PhiJ}(\uV,\uV) \simeq \CRProfJ$ as monoidal categories.  To this end, we continue with the following:

\begin{LemSub}
There is an adjunction
\begin{equation}\label{eq:adj_prof_func}\Adjn{\VPROF(\J,\J)}{\Lambda}{\Theta}{}{}{\VCAT(\J,\uV)}\end{equation}
that restricts to an equivalence of categories
$$\CRProf_\J(\J,\J) \;\;\;\;\simeq\;\;\;\; \VCAT(\J,\uV)\;.$$
Here, $\VPROF(\J,\J)$ denotes the ordinary category of $\V$-profunctors $\J \modto \J$.
\end{LemSub}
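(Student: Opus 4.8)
The plan is to write down the two functors explicitly, exhibit $\Lambda \dashv \Theta$ via a natural bijection of hom-sets, and then obtain the restricted equivalence essentially for free from the shape of the unit and counit. On objects, $\Theta$ sends a $\V$-functor $S\colon\J\to\uV$ to the copresheaf-representable profunctor $\uV(j-,S-)$, and $\Lambda$ sends a profunctor $M\colon\J\modto\J$ to $M_I=M(I,-)\colon\J\to\uV$; on arrows $\Theta$ whiskers, $\alpha\mapsto\uV(j-,\alpha-)$, and $\Lambda$ restricts the first variable to $I$, $\phi\mapsto\phi_{I,-}$. Functoriality is routine, using that composing a $\V$-natural transformation on $\J^{\op}\otimes\J$ with the $\V$-functor $(I,-)\colon\J\to\J^{\op}\otimes\J$ yields a $\V$-natural transformation on $\J$.

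The heart of the argument is the construction of a bijection
$$\VCAT(\J,\uV)(\Lambda M, S)\;\cong\;\VPROF(\J,\J)(M,\Theta S)$$
natural in $M\in\VPROF(\J,\J)$ and $S\in\VCAT(\J,\uV)$. From right to left, restrict a profunctor morphism $\phi\colon M\Rightarrow\uV(j-,S-)$ to first variable $I$ and post-compose with the canonical isomorphism $\uV(I,S-)\cong S-$. From left to right, send $f\colon M_I\Rightarrow S$ to $\uV(j-,f-)\circ\zeta^M$, where $\zeta^M\colon M\Rightarrow\uV(j-,M_I-)$ is the canonical transformation of \bref{thm:charn_copr_rep}. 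That these are mutually inverse rests on two points: (i) the component of $\zeta^M$ at first variable $I$ is, modulo the canonical isomorphism $\uV(I,V)\cong V$, the identity --- this follows from the description of $\zeta^M$ via the comparison maps $\kappa$ of \bref{thm:charns_talgs} and the convention $[I,V]=V$ of \bref{rem:des_cot_v}; and (ii) $\zeta^{\Theta S}$ is invertible (since $\Theta S$ is copresheaf-representable), and whiskered with the canonical isomorphism it recovers $1_{\Theta S}$. Naturality in both variables then reduces to naturality of $\zeta^{(-)}$ in its profunctor argument --- which is built into the definition of $\kappa$ --- and of the canonical isomorphisms $\uV(I,-)\cong\id$.

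Once $\Lambda\dashv\Theta$ is established, I would read off its unit and counit from the bijection. The counit at $S$ is the image of $1_{\Theta S}$, namely the canonical isomorphism $\uV(I,S-)\cong S-$, hence invertible; so $\Theta$ is fully faithful. The unit at $M$ is the image of $1_{\Lambda M}$, namely $\zeta^M$ itself. Invoking the standard fact that for an adjunction with invertible counit the essential image of the right adjoint consists exactly of the objects at which the unit is invertible, together with the fact that $\zeta^M$ is invertible precisely when $M$ is copresheaf-representable (\bref{thm:charn_copr_rep}), the essential image of $\Theta$ is $\CRProf_\J(\J,\J)$. Therefore $\Theta$ corestricts to an equivalence $\VCAT(\J,\uV)\simeq\CRProf_\J(\J,\J)$ with quasi-inverse the restriction of $\Lambda$, which is the asserted equivalence.

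The main obstacle I anticipate is the bookkeeping in the mutual-inverse verification: pinning down the component of $\zeta^M$ at first variable $I$ and tracking the several occurrences of the canonical isomorphism $\uV(I,V)\cong V$ and of the whiskering maps $\uV(j-,(-)-)$ without conflating the designated cotensor $[I,V]=V$ (which is literally $V$, with identity counit) with the hom-object $\uV(I,V)$ (only canonically isomorphic to it). Everything past that point is formal.
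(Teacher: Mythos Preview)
Your proposal is correct and takes essentially the same approach as the paper: the paper defines $\Lambda$ and $\Theta$ exactly as you do, takes $\zeta^M$ as unit and the canonical isomorphism $\uV(I,S-)\cong S$ as counit, and then invokes the same criterion (invertible counit plus unit invertible exactly on copresheaf-representables, by \bref{thm:charn_copr_rep}) to obtain the restricted equivalence. The only cosmetic difference is that the paper verifies the triangle identities directly rather than passing through the hom-set bijection, but the content is identical and your anticipated bookkeeping with $[I,V]=V$ versus $\uV(I,V)$ is exactly the substance of that verification.
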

\begin{proof}
On objects, we define $\Lambda M = M_I = M(I,-)$ and $\Theta S = \uV(j-,S-)$.  These assignments extend to functors in the evident way.  By \bref{thm:charn_copr_rep}, we have a canonical morphism
$$\zeta^M\colon M \Longrightarrow \uV(j-,M_I-) = \Theta \Lambda M$$
for each object $M$ of $\VPROF(\J,\J)$, and these constitute a natural transformation $\zeta:1 \Rightarrow \Theta\Lambda$.  Also, we have a canonical isomorphism
$$\xi^S\colon\Lambda\Theta S = \uV(I,S-) \Longrightarrow S$$
for each object $S$ of $\VCAT(\J,\uV)$, and these consistute a natural transformation $\xi:\Lambda\Theta \Rightarrow 1$.  It is straightforward to verify the triangular equations in order to show that $\Lambda \nsststile{\xi}{\zeta} \Theta$.  Since the counit $\xi$ is an isomorphism, $\Theta$ is fully faithful and the adjunction restricts to an equivalence between $\VCAT(\J,\V)$ and the full subcategory of $\VPROF(\J,\J)$ consisting of all $M$ for which $\zeta^M$ is an isomorphism, but these are exactly the copresheaf-representable $\V$-profunctors \pbref{thm:charn_copr_rep}.
\end{proof}

\begin{CorSub}\label{thm:comp_equiv_cats}
There are equivalences of categories
$$\CRProf_\J(\J,\J) \;\;\;\simeq\;\;\;\VCAT(\J,\uV)\;\;\;\simeq\;\;\;\Cocts{\PhiJ}(\uV,\uV)\;,$$
where the rightmost equivalence is obtained via \bref{thm:eleutheric} as a restriction of the left Kan-extension adjunction \eqref{eq:lanj_adj} in the case that $\C = \uV$.
\end{CorSub}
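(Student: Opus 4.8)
The plan is to assemble both equivalences from material already in hand, so that the proof amounts to little more than citing the right results in the right order. The leftmost equivalence $\CRProf_\J(\J,\J) \simeq \VCAT(\J,\uV)$ is precisely the content of the preceding Lemma, so I would simply invoke it; all the remaining work concerns the rightmost equivalence, which I plan to exhibit as the promised restriction of the left Kan-extension adjunction \eqref{eq:lanj_adj} in the instance $\C = \uV$.

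First I would record that $\uV$ is itself $\PhiJ$-cocomplete --- this is immediate from the hypothesis that $j$ is eleutheric, being clause (1) of the characterization in \bref{par:charn_ext_sys_phiatomic_objs} --- so that \eqref{eq:lanj_adj} is available with $\C = \uV$, giving an adjunction $\Lan_j \dashv \VCAT(j,\uV)$ between $\VCAT(\J,\uV)$ and $\VCAT(\uV,\uV)$. Since $j$ is fully faithful the unit of this adjunction is an isomorphism, so $\Lan_j$ is fully faithful and the adjunction restricts to an equivalence between $\VCAT(\J,\uV)$ and the full replete subcategory of $\VCAT(\uV,\uV)$ spanned by those $T$ whose counit component $\xi_T \colon \Lan_j(T \circ j) \Rightarrow T$ is an isomorphism. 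By Proposition \bref{thm:jary_charns}, applied with $\C = \uV$ and using that $j$ is eleutheric, this subcategory is exactly $\Cocts{\PhiJ}(\uV,\uV)$; equivalently, this identification is Theorem \bref{thm:eleutheric}, which asserts that $j$ presents $\uV$ as a free $\PhiJ$-cocompletion of $\J$, so that composition with $j$ is an equivalence $\Cocts{\PhiJ}(\uV,\uV) \to \VCAT(\J,\uV)$ with pseudo-inverse $\Lan_j$. Composing this with the equivalence of the preceding Lemma yields the displayed chain.

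I do not anticipate any genuine obstacle here: the corollary is a formal splicing-together of the Lemma with Proposition \bref{thm:jary_charns} (equivalently Theorem \bref{thm:eleutheric}), and the only point requiring even a one-line verification is the $\PhiJ$-cocompleteness of $\uV$, which follows directly from eleuthericity.
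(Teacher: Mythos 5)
Your proposal is correct and follows exactly the route the paper intends: the left equivalence is the preceding Lemma, and the right one is the restriction of the adjunction \eqref{eq:lanj_adj} with $\C = \uV$ to its essential image, identified with $\Cocts{\PhiJ}(\uV,\uV)$ via Proposition \bref{thm:jary_charns} (equivalently, via Theorem \bref{thm:eleutheric}). The paper offers no separate proof of the corollary beyond this, so your write-up merely makes the same argument explicit.
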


\begin{ThmSub}\label{thm:crprofj_bieq_jary_end}
There is a (bi)equivalence of bicategories
$$\CRProfJ \;\;\;\;\simeq\;\;\;\; \Cocts{\PhiJ}(\uV)$$
between the one-object bicategory $\CRProfJ$ of copresheaf-representable $\V$-profunctors on $\J$ and the one-object 2-category $\Cocts{\PhiJ}(\uV)$ of $\J$-ary endo-$\V$-functors on $\uV$.  Equivalently, $\CRProfJ \simeq \Cocts{\PhiJ}(\uV)$ as monoidal categories.
\end{ThmSub}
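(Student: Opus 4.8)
The plan is to reduce the asserted biequivalence to a strong monoidal equivalence of hom-categories and then upgrade the equivalence of \bref{thm:comp_equiv_cats} to that effect. Since $\CRProfJ$ and $\Cocts{\PhiJ}(\uV)$ each have a single object, a biequivalence between them is exactly a strong monoidal equivalence between the hom-categories $\CRProf_\J(\J,\J)$, with monoidal product profunctor composition $\otimes$, and $\Cocts{\PhiJ}(\uV,\uV)$, with monoidal product functor composition $\circ$, carrying the identity 1-cells to one another up to coherent isomorphism (note $\Cocts{\PhiJ}(\uV,\uV)$ is closed under $\circ$, since $\J$-ary $=$ $\PhiJ$-cocontinuous by \bref{rem:jary_phi_cocts} and composites of $\PhiJ$-cocontinuous $\V$-functors are $\PhiJ$-cocontinuous). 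By \bref{thm:comp_equiv_cats} we already have an equivalence of the underlying categories; explicitly, the composite functor $\Psi \colon \Cocts{\PhiJ}(\uV,\uV) \to \CRProf_\J(\J,\J)$ sends a $\J$-ary $\V$-functor $T$ to the copresheaf-representable $\V$-profunctor $\uV(j-,Tj-)$, with pseudo-inverse $\Phi \colon M \mapsto M^\sharp_I = \Lan_j M_I$ (which lands among $\J$-ary $\V$-functors, a left Kan extension along $j$ being $\PhiJ$-cocontinuous by the argument in the proof of \bref{thm:jary_charns}). It remains to equip $\Psi$ with a strong monoidal structure.

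First I would supply the two constraint isomorphisms. The unit constraint is the identification $\Psi(1_{\uV}) = \uV(j-,j-) = \J(-,-)$, the identity 1-cell of $\CRProfJ$. For the multiplication constraint, fix $\J$-ary $\V$-functors $T,T'$, put $M = \Psi(T)$ and $N = \Psi(T')$, and invoke \bref{thm:comp_copr_rep}, whose proof exhibits the composite as $M \otimes N \cong \uV(j-,M^\sharp_I N_I-)$. Here $M_I = M(I,-) = \uV(I,Tj-) \cong Tj$ via the canonical isomorphism $\uV(I,-) \cong 1_{\uV}$, whence $M^\sharp_I = \Lan_j M_I \cong \Lan_j(Tj) \cong T$, the last isomorphism because $T$, being $\PhiJ$-cocontinuous, is a left Kan extension of $Tj$ along $j$ (\bref{thm:jary_charns}); likewise $N_I \cong T'j$. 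Substituting, $M \otimes N \cong \uV(j-,T(T'j-)) = \uV(j-,(T \circ T')j-) = \Psi(T \circ T')$, and one checks this chain is natural in $T$ and $T'$, yielding the required natural isomorphism $\Psi(T) \otimes \Psi(T') \xrightarrow{\sim} \Psi(T \circ T')$.

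It then remains to verify the associativity and unit coherence axioms for $\Psi$ with these constraints. Every isomorphism used above is canonical — assembled from the universal property of the coends defining profunctor composition, the Yoneda isomorphism $\uV(I,-) \cong 1_{\uV}$, and the counit of the left Kan extension adjunction \eqref{eq:lanj_adj} with $\C = \uV$ — and the monoidal product of $\CRProfJ$ is, by the very construction in the proof of \bref{thm:comp_copr_rep}, built from these same data; so the coherence axioms reduce to routine diagram chases. This done, $\Psi$ is a monoidal equivalence, that is, $\CRProfJ \simeq \Cocts{\PhiJ}(\uV)$ as monoidal categories, which is precisely the asserted biequivalence of one-object bicategories.

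The only non-formal ingredient is the computation of the multiplication constraint, and that is essentially \bref{thm:comp_copr_rep} together with the identification $\Lan_j(Tj) \cong T$ from \bref{thm:jary_charns}. The genuine obstacle, such as it is, is bookkeeping: organizing the coherence verification so that it clearly factors through the canonical isomorphisms already used to define composition in $\CRProfJ$. I expect this to be unobstructed precisely because $\CRProfJ$'s composition was set up through exactly those isomorphisms, so no new commutativities need to be established beyond the standard ones for coends and Kan extensions.
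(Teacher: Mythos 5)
Your proposal is correct and follows essentially the same route as the paper: both take the equivalence of underlying categories from \bref{thm:comp_equiv_cats}, equip the functor $T \mapsto \uV(j-,Tj-)$ with a strong monoidal structure whose multiplication constraint is exactly the chain of isomorphisms from \bref{thm:comp_copr_rep} (eleuthericity to pull $\uV(J,-)$ through the coend, plus $\Lan_j(Tj)\cong T$ from \bref{thm:jary_charns}), and then observe that a strong monoidal equivalence of hom-categories is the asserted biequivalence. The only difference is that the paper writes out the components $m^{TS}_{JKL}$ explicitly and displays the associativity and unit diagrams before declaring them straightforwardly verified, whereas you defer this to a routine check; the substance is the same.
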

\begin{proof}
By \bref{thm:comp_equiv_cats}, there is an adjoint equivalence $\Omega \dashv \Gamma:\CRProfJ \rightarrow \Cocts{\PhiJ}(\uV)$ between the ordinary categories underlying the monoidal categories in question.  It suffices to show that $\Omega$ carries the structure of a strong monoidal functor, for then it follows by \cite[1.5]{Ke:Doctr} that $\Omega \dashv \Gamma$ underlies an adjunction in the 2-category of monoidal categories, but since the unit and counit of the resulting adjunction are isomorphisms, it is an equivalence.

The functor $\Omega:\Cocts{\PhiJ}(\uV) \rightarrow \CRProfJ$ is given by
$$\Omega(T) = \uV(j-,Tj-)\;:\;\J^\op \otimes \J \rightarrow \uV$$
naturally in $T \in \Cocts{\PhiJ}(\uV)$.  Given objects $S,T$ of $\Cocts{\PhiJ}(\uV)$, we have isomorphisms
$$
\begin{array}{lll}
(\Omega(T) \otimes \Omega(S))(J,L) & = & \int^{K \in \J}\uV(J,TK) \otimes \uV(K,SL)\\
                                   & \cong & \uV(J,\int^{K \in \J} TK \otimes \uV(K,SL))\\
                                   & \cong & \uV(J,TSL)\\
                                   & = & \Omega(T \circ S)(J,L)
\end{array}
$$
$\V$-natural in $J \in \J^\op$, $L \in \J$.  Indeed, the first of the two indicated isomorphisms results from the assumption that $j$ is eleutheric, and the second obtains since $T \cong \Lan_j(T \circ j)$.  Hence we have a composite isomorphism
$$m^{TS}:\Omega(T) \otimes \Omega(S) \overset{\sim}{\Longrightarrow} \Omega(T \circ S),$$
which is evidently natural in $T,S \in \Cocts{\PhiJ}(\uV)$.  Noting that $\Omega(1_{\uV}) = \uV(j-,j-) = \J(-,-)$, let us denote by
$$e:\J(-,-) \overset{\sim}{\Longrightarrow} \Omega(1_{\uV})$$
the identity morphism.  We claim that $(\Omega,e,m)$ is a monoidal functor.  Using the definition of $m^{TS}$, we find that each of its components
$$m^{TS}_{JL}:\int^{\inJ{K}}\uV(J,TK) \otimes \uV(K,SL) \longrightarrow \uV(J,TSL)\;\;\;\;\;\;(J,L \in \J)$$
is induced by the $\V$-natural family consisting of the composites
$$m^{TS}_{JKL} = \left(\uV(J,TK) \otimes \uV(K,SL) \xrightarrow{1 \otimes T_{K,SL}} \uV(J,TK) \otimes \uV(TK,TSL) \xrightarrow{c} \uV(J,TSL)\right)$$
$(K \in \J)$ where $c$ is the composition morphism.  Hence, using the definition of the monoidal product $\otimes$ in $\CRProfJ$, it follows that the needed diagrammatic associativity law \cite[II.1.2 MF3]{EiKe} for the monoidal functor $(\Omega,e,m)$ amounts to the commutativity of the diagram
$$
\xymatrix{
\uV(J,UK) \otimes \uV(K,TL) \otimes \uV(L,SM) \ar[d]_{1 \otimes m^{TS}_{KLM}} \ar[rr]^(.55){m^{UT}_{JKL} \otimes 1} & & \uV(J,UTL) \otimes \uV(L,SM) \ar[d]^{m^{UT,S}_{JLM}}\\
\uV(J,UK) \otimes \uV(K,TSM) \ar[rr]_{m^{U,TS}_{JKM}} & & \uV(J,UTSM)
}
$$
for all $S,T,U \in \Cocts{\PhiJ}(\uV)$ and all $J,K,L,M \in \J$.  This commutativity is straightforwardly verified through a single diagrammatic computation.  Similarly, the left unit law \cite[II.1.2 MF1]{EiKe} for $(\Omega,e,m)$ amounts to the statement that
$$m^{1_{\uV},\:T}_{JKL}:\uV(J,K) \otimes \uV(K,TL) \longrightarrow \uV(J,TL)$$
is merely the composition morphism, for all $T \in \Cocts{\PhiJ}(\uV)$ and all $J,K,L \in \J$, and this is immediate from the definition.  The right unit law \cite[II.1.2 MF2]{EiKe} amounts to the statement that
$$m^{T,\:1_{\uV}}_{JKL}:\uV(J,TK) \otimes \uV(K,L) \longrightarrow \uV(J,TL)$$
is equal to the composite
$$\uV(J,TK) \otimes \uV(K,L) \xrightarrow{1 \otimes T_{KL}} \uV(J,TK) \otimes \uV(TK,TL) \overset{c}{\longrightarrow} \uV(J,TL)$$
for all $T,J,K,L$, and this is also immediate from the definition.
\end{proof}

\begin{DefSub}
A $\J$-\textbf{ary $\V$-monad} on $\uV$ is a $\V$-monad $\TT$ on $\uV$ whose underlying $\V$-functor $T:\uV \rightarrow \uV$ is a $\J$-ary $\V$-functor.  Equivalently, a $\J$-ary $\V$-monad on $\uV$ is a monad on $\uV$ in the 2-category $\Cocts{\PhiJ}$.  Therefore, $\J$-ary $\V$-monads on $\uV$ form a category
$$\MndJ(\uV) = \Mnd_{\Cocts{\PhiJ}}(\uV),$$
the category of monads on $\uV$ in $\Cocts{\PhiJ}$.  We show in \bref{thm:charn_jary_mnd_via_jflat_colims} that a $\V$-monad $\TT$ on $\uV$ is $\J$-ary if and only if $T$ conditionally preserves $\J$-flat colimits.
\end{DefSub}

\begin{ThmSub}\label{thm:jth_jary_mnd}
There is an equivalence
$$\ThJ \;\;\;\;\simeq\;\;\;\; \MndJ(\uV)$$
between the category $\ThJ$ of $\J$-theories and the category $\MndJ(\uV)$ of $\J$-ary $\V$-monads on $\uV$.
\end{ThmSub}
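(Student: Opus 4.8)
The plan is to obtain this equivalence formally, by stacking the two preceding structural results --- the isomorphism $\ThJ \cong \Mnd_{\CRProfJ}(\J)$ of \bref{thm:th_as_mnd_crprofj} and the monoidal equivalence $\CRProfJ \simeq \Cocts{\PhiJ}(\uV)$ of \bref{thm:crprofj_bieq_jary_end} --- together with the standard principle that passing to categories of monoids carries monoidal equivalences to equivalences of categories.  First I would unwind the definitions on both sides.  Reading the one-object bicategory $\CRProfJ$ as a monoidal category \pbref{def:crprofj}, the category $\Mnd_{\CRProfJ}(\J)$ appearing in \bref{thm:th_as_mnd_crprofj} is by definition the category $\Mon(\CRProfJ)$ of monoids therein; and by the very definition of a $\J$-ary $\V$-monad, $\MndJ(\uV) = \Mnd_{\Cocts{\PhiJ}}(\uV)$ is the category $\Mon\bigl(\Cocts{\PhiJ}(\uV)\bigr)$ of monoids in the monoidal category of $\J$-ary endo-$\V$-functors on $\uV$ under composition.

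Next I would recall that the proof of \bref{thm:crprofj_bieq_jary_end} does more than produce an equivalence of underlying categories: via \cite[1.5]{Ke:Doctr} it exhibits the functor $\Omega \colon \Cocts{\PhiJ}(\uV) \to \CRProfJ$, $\Omega(T) = \uV(j-,Tj-)$, as a strong monoidal functor taking part in an adjoint equivalence, with pseudo-inverse $\Gamma$ and invertible unit and counit, in the 2-category of monoidal categories, strong monoidal functors, and monoidal natural transformations.  The formation of the category of monoids $\Mon(-)$ is a 2-functor from that 2-category to $\CAT$ --- it sends a strong monoidal $(F,e,m)$ to the functor $(A,\eta,\mu) \mapsto (FA,\,F\eta \cdot e,\,F\mu \cdot m)$, and sends monoidal natural transformations componentwise to natural transformations --- and 2-functors preserve adjoint equivalences; hence applying $\Mon(-)$ to the equivalence of \bref{thm:crprofj_bieq_jary_end} yields an equivalence $\Mon(\CRProfJ) \simeq \Mon\bigl(\Cocts{\PhiJ}(\uV)\bigr)$.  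Composing with \bref{thm:th_as_mnd_crprofj} then gives
$$\ThJ \;\cong\; \Mnd_{\CRProfJ}(\J) \;=\; \Mon(\CRProfJ) \;\simeq\; \Mon\bigl(\Cocts{\PhiJ}(\uV)\bigr) \;=\; \MndJ(\uV)\;,$$
which is the claimed equivalence.

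Finally, as a check on the statement, I would record the action on objects.  By \bref{thm:th_as_mnd_crprofj} a $\J$-theory $(\T,\tau)$ corresponds to the copresheaf-representable profunctor $\T(\tau-,\tau-)$, whose value at $I$ is $M_I = \T(\tau-,I)\colon \J \to \uV$; the pseudo-inverse $\Gamma$ then sends this to $\Lan_j\bigl(\T(\tau-,I)\bigr)\colon \uV \to \uV$ (on objects $\Gamma$ is $M \mapsto \Lan_j(M_I)$, by \bref{thm:comp_equiv_cats}).  Since $j$ is eleutheric, $\T$ is extensible, and one sees via \bref{thm:carrier_vfunc_cr_g_rel_jstb_colims} that $G\colon \Alg{\T} \to \uV$ preserves the Kan extension $\Lan_j\phi$ of \bref{thm:existence_and_vmonadicity_of_talg}, so that $\Lan_j\bigl(\T(\tau-,I)\bigr) = \Lan_j(G\phi)$ is exactly the underlying $\V$-functor of the $\V$-monad induced by the $\V$-monadic adjunction of \bref{thm:existence_and_vmonadicity_of_talg}; thus the equivalence pairs each $\J$-theory with its $\V$-monad of algebras over $\uV$.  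I do not expect any genuine obstacle: the one point that wants care is that $\CRProfJ$ is honestly a bicategory rather than a strict monoidal category, so one must work throughout with the non-strict notions of monoidal category, monoid, and (strong) monoidal functor, and with the correspondingly non-strict --- and entirely routine --- 2-functoriality of $\Mon(-)$.  Everything else is bookkeeping over \bref{thm:th_as_mnd_crprofj} and \bref{thm:crprofj_bieq_jary_end}.
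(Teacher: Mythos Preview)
Your proposal is correct and follows essentially the same approach as the paper: combine the isomorphism $\ThJ \cong \Mnd_{\CRProfJ}(\J)$ of \bref{thm:th_as_mnd_crprofj} with the biequivalence $\CRProfJ \simeq \Cocts{\PhiJ}(\uV)$ of \bref{thm:crprofj_bieq_jary_end}, then use that a biequivalence (equivalently, a monoidal equivalence) induces an equivalence on categories of monads. The paper's proof is just the one-line chain $\ThJ \cong \Mnd_{\CRProfJ}(\J) \simeq \Mnd_{\Cocts{\PhiJ}(\uV)}(\uV) = \MndJ(\uV)$; your additional remarks on the 2-functoriality of $\Mon(-)$, the non-strictness caveat, and the description of the action on objects are correct elaborations (the latter being what the paper establishes separately in \bref{thm:th_assoc_to_jary_mnd} and \bref{thm:ind_monad_iso_assoc_monad}).
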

\begin{proof}
By \bref{thm:th_as_mnd_crprofj}, $\ThJ$ is isomorphic to the category $\Mnd_{\CRProfJ}(\J)$ of monads on $\J$ in the one-object bicategory $\CRProfJ$, and by \bref{thm:crprofj_bieq_jary_end} we have a (bi)equivalence of bicategories $\CRProfJ \simeq \Cocts{\PhiJ}(\uV)$, so
$$\ThJ \cong \Mnd_{\CRProfJ}(\J) \simeq \Mnd_{\Cocts{\PhiJ}(\uV)}(\uV) = \MndJ(\uV)\;.$$
\end{proof}

\begin{DefSub}
Let us denote by
$$\mathsf{m}:\ThJ \rightarrow \MndJ(\uV) \;\;\;\;\;\;\text{and}\;\;\;\;\;\; \mathsf{t}:\MndJ(\uV) \rightarrow \ThJ$$
the equivalences obtained in \bref{thm:jth_jary_mnd}.
\end{DefSub}

As a corollary, theories with unrestricted arities in $\V$ are equivalent to $\V$-monads on $\uV$, as Dubuc showed for complete and well-powered $\V$ \cite{Dub:EnrStrSem}: 

\begin{CorSub}\label{thm:vth_vmnd}
There is an equivalence
$$\Th_{\uV} \;\;\;\;\simeq\;\;\;\; \Mnd_{\VCAT}(\uV)$$
between the category $\Th_{\uV}$ of $\uV$-theories (i.e., $\J$-theories for $\J = \uV$) and the category $\Mnd_{\VCAT}(\uV)$ of arbitrary $\V$-monads on $\uV$.
\end{CorSub}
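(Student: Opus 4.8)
The plan is to deduce this directly from Theorem~\bref{thm:jth_jary_mnd} by specializing to the unrestricted system of arities. First I would take $\J = \uV$ with $j = 1_{\uV}:\uV \hookrightarrow \uV$, which is a system of arities \pbref{exa:unr_ar} and is eleutheric by item 3 of \bref{exa:ext_sys_ar}, since every $\V$-functor $T:\uV \rightarrow \uV$ is its own left Kan extension along $1_{\uV}$ and this Kan extension is preserved by every $\V$-functor on $\uV$. Hence Theorem~\bref{thm:jth_jary_mnd} applies and yields an equivalence $\Th_{\uV} = \ThJ \simeq \MndJ(\uV)$, in which $\MndJ(\uV) = \Mnd_{\Cocts{\PhiJ}}(\uV)$ is the category of $\uV$-ary $\V$-monads on $\uV$.

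It then remains to identify $\MndJ(\uV)$ with $\Mnd_{\VCAT}(\uV)$. For this I would invoke item 2 of \bref{exa:jary_endofunctors}: for $j = 1_{\uV}$, every $\V$-functor $T:\uV \rightarrow \uV$ is $\J$-ary, so the hom-category $\Cocts{\PhiJ}(\uV,\uV)$ coincides, as a monoidal category, with $\VCAT(\uV,\uV)$. Since moreover $\uV$ is $\PhiJ$-cocomplete (left Kan extensions along $1_{\uV}$ always exist), the one-object locally full sub-2-category of $\Cocts{\PhiJ}$ on $\uV$ is precisely the one-object full sub-2-category of $\VCAT$ on $\uV$; consequently their categories of monads agree, i.e.\ $\MndJ(\uV) = \Mnd_{\Cocts{\PhiJ}}(\uV) = \Mnd_{\VCAT}(\uV)$. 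Combining this with the equivalence of the previous paragraph gives $\Th_{\uV} \simeq \Mnd_{\VCAT}(\uV)$, as desired.

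There is essentially no obstacle here, as the statement is a direct corollary of the preceding theorem; the only point requiring care is the bookkeeping that ``$\uV$-ary'' imposes no condition at all in the unrestricted case, so that the monoidal category of $\uV$-ary endo-$\V$-functors on $\uV$ --- equivalently, via \bref{thm:crprofj_bieq_jary_end}, the bicategory $\CRProf_{\uV}$ of copresheaf-representable $\V$-profunctors on $\uV$ --- really is all of $\VCAT(\uV,\uV)$, whence the monads therein are exactly the $\V$-monads on $\uV$.
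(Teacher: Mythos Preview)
Your proof is correct and follows essentially the same approach as the paper: apply Theorem~\bref{thm:jth_jary_mnd} to the unrestricted system of arities and then invoke \bref{exa:jary_endofunctors} to see that every endo-$\V$-functor on $\uV$ is $\uV$-ary, so that $\Mnd_{\uV}(\uV) = \Mnd_{\VCAT}(\uV)$. The paper's proof is simply a one-line version of yours, leaving implicit the verification that $j = 1_{\uV}$ is eleutheric (which you rightly cite from \bref{exa:ext_sys_ar}).
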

\begin{proof}
By \bref{exa:jary_endofunctors}, every $\V$-monad $\TT$ on $\uV$ is $\uV$-ary, so this follows from Theorem \bref{thm:jth_jary_mnd}.
\end{proof}

\begin{PropSub}\label{thm:th_assoc_to_jary_mnd}
Let $\TT = (T,\eta,\mu)$ be a $\J$-ary $\V$-monad on $\uV$, and let $\J_\TT$ denote the full sub-$\V$-category of the Kleisli $\V$-category $\uV_\TT$ whose objects are exactly those of $\J$.  Then 
$$\mathsf{t}(\TT) = \J_\TT^\op\;,$$
i.e., the $\J$-theory $\mathsf{t}(\TT)$ associated to $\TT$ via the equivalence \bref{thm:jth_jary_mnd} is precisely $\J_\TT^\op$.
\end{PropSub}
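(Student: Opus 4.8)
The plan is to trace the equivalence $\mathsf{t}\colon\MndJ(\uV)\to\ThJ$ through the proofs of \bref{thm:jth_jary_mnd}, \bref{thm:crprofj_bieq_jary_end} and \bref{thm:th_as_mnd_crprofj}, and apply it to $\TT = (T,\eta,\mu)$. By construction, $\mathsf{t}$ may be taken to be the composite of the functor $\Omega\colon\Cocts{\PhiJ}(\uV)\to\CRProfJ$ of \bref{thm:crprofj_bieq_jary_end}, applied to monads, with the isomorphism $\Mnd_{\CRProfJ}(\J)\cong\ThJ$ of \bref{thm:th_as_mnd_crprofj}. Since $\Omega$ is shown in the proof of \bref{thm:crprofj_bieq_jary_end} to be strong monoidal, it carries the $\J$-ary $\V$-monad $\TT$ to the monad on $\J$ in $\CRProfJ$ with underlying $1$-cell the copresheaf-representable $\V$-profunctor $\Omega(T) = \uV(j-,Tj-)$, with multiplication $\Omega(\mu)\circ m^{TT}\colon\Omega(T)\otimes\Omega(T)\Rightarrow\Omega(T)$ and unit $\Omega(\eta)\circ e\colon\J(-,-)\Rightarrow\Omega(T)$, where $m$ and $e$ are the monoidal-structure morphisms of $\Omega$.

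Next I would recall from the proof of \bref{thm:th_as_mnd_crprofj} that the isomorphism $\Mnd_{\CRProfJ}(\J)\cong\ThJ$ factors through the category $\X = \J \slash \:\eCAT{\V'}(\ob\J)$, with $\ThJ$ embedded via $(\T,\tau)\mapsto(\T^\op,\tau^\op)$ and $\Mnd_{\CRProfJ}(\J)$ embedded as the restriction of the isomorphism $\Mnd_{\eProf{\V'}}(\J)\cong\X$ of \bref{thm:vprof_mnds}. Thus a monad $M$ on $\J$ in $\CRProfJ$ corresponds to the pair $(\A,E)$ in which $\A$ is the $\V$-category with $\ob\A = \ob\J$, hom-objects $\A(J,L) = M(J,L)$, and composition and identities supplied by the monad structure of $M$, and $E\colon\J\to\A$ is the identity-on-objects $\V$-functor; the associated $\J$-theory is then $(\A^\op,E^\op)$. (A priori $\A$ is a $\V'$-category, but its hom-objects $M(J,L)$ lie in $\V$, so it is a $\V$-category.) Taking $M = \Omega(T)$ gives $\A(J,L) = \uV(jJ,TjL)$, which is precisely the Kleisli hom-object $\uV_\TT(jJ,jL) = \J_\TT(J,L)$; so $\A$ and $\J_\TT$ agree on objects and hom-objects. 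It then remains to check that the composition and identities of $\A$ are the Kleisli ones, namely the identities $\eta_J\colon J\to TJ$ and the assignment $g\otimes f\mapsto\mu_L\circ Tf\circ g$ on $\uV(J,TK)\otimes\uV(K,TL)$. The identity statement is immediate from the formula for $\Omega(\eta)\circ e$, and the composition statement follows by combining the explicit component formula $m^{TT}_{JKL} = c\circ(1\otimes T_{K,TL})$ recorded in the proof of \bref{thm:crprofj_bieq_jary_end} with the fact that $\Omega(\mu)$ acts by postcomposition with $\mu$. Hence $\A = \J_\TT$, with $E$ the restriction to $\J$ of the canonical Kleisli $\V$-functor, and therefore $\mathsf{t}(\TT) = (\A^\op,E^\op) = \J_\TT^\op$.

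The work is essentially bookkeeping, and the two points that need care are: (i) the variances, so as to recognise that although $\Omega(T) = \uV(j-,Tj-)$ is literally the hom-profunctor of $\J_\TT$, the correspondence of \bref{thm:th_as_mnd_crprofj} --- which contains the twist $\T\mapsto\T^\op$ --- produces the $\J$-theory $\J_\TT^\op$ rather than $\J_\TT$; and (ii) checking that the multiplication of $\Omega(T)$, transported along \bref{thm:vprof_mnds}, is \emph{on the nose} the Kleisli composition and not merely isomorphic to it, which is exactly where the explicit formula for $m^{TT}$ is needed. No idea beyond the results already established is required. As a cross-check, one may instead verify the claim in the other direction, showing directly that $\J_\TT^\op$ is a $\J$-theory and that its associated $\J$-ary $\V$-monad $\mathsf{m}(\J_\TT^\op) = \Lan_j\bigl(\J_\TT^\op(\tau-,I)\bigr) \cong \Lan_j(Tj)$ is isomorphic as a monad to $\TT$, using that $T$ is a left Kan extension along $j$ by \bref{thm:jary_charns}.
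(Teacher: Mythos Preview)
Your proposal is correct and follows essentially the same route as the paper's own proof: both trace $\mathsf{t}$ as the composite of $\Mon(\Omega)$ with the isomorphism of \bref{thm:th_as_mnd_crprofj}, compute the resulting profunctor monad $(\Omega(T),\Omega(\eta)\circ e,\Omega(\mu)\circ m^{TT})$, and then identify the associated $\V$-category $\A$ with $\J_\TT$ by checking that the hom-objects, identities, and composition morphisms coincide on the nose via the explicit formula for $m^{TT}_{JKL}$. Your closing cross-check via $\mathsf{m}(\J_\TT^\op)\cong\TT$ is an extra sanity check not present in the paper but is harmless.
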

\begin{proof}
In the notation of \bref{par:vmonads_and_vmonadicity}, $\mathsf{t}:\MndJ(\uV) \rightarrow \ThJ$ is the composite
$$\MndJ(\uV) = \Mon(\Cocts{\PhiJ}(\uV)) \xrightarrow{\Mon(\Omega)} \Mon(\CRProfJ) \xrightarrow{\sim} \Th_\J$$
where $\Mon(\Omega)$ is induced by the monoidal functor $\Omega:\Cocts{\PhiJ}(\uV) \rightarrow \CRProfJ$ defined in the proof of \bref{thm:crprofj_bieq_jary_end}.  The functor $\Mon(\Omega)$ sends $\TT$ to a monoid $(\Omega(T),\mathsf{e},\mathsf{m})$ in $\CRProfJ$ with
$$\Omega(T) = \uV(j-,(T \circ j)-):\J \modto \J$$
$$\mathsf{e} = \left(\J(-,-) \xrightarrow{e} \Omega(1_{\uV}) \xrightarrow{\Omega(\eta)} \Omega(T)\right)$$
$$\mathsf{m} = \left(\Omega(T) \otimes \Omega(T) \xrightarrow{m^{TT}} \Omega(T \circ T) \xrightarrow{\Omega(\mu)} \Omega(T)\right)$$
where $e$ and $m$ are the monoidal structure morphisms carried by $\Omega$.  Using the definition of $e$ and $m$, we find that 
$$\mathsf{e} = \left(\uV(j-,j-) \xrightarrow{\uV(j-,(\eta \circ j)-)} \uV(j-,(T \circ j)-)\right)$$
and that the components of $\mathsf{m}$ are the composite morphisms
$$\mathsf{m}_{JL} = \left(\int^{\inJ{K}}\uV(J,TK) \otimes \uV(K,TL) \xrightarrow{m^{TT}_{JL}} \uV(J,TTL) \xrightarrow{\uV(J,\mu_L)} \uV(J,TL)\right)$$
with $J,L \in \ob\J$, induced by the composites
\begin{equation}\label{eq:kl_compn_morphs}
\renewcommand{\objectstyle}{\scriptstyle}
\renewcommand{\labelstyle}{\scriptstyle}
\xymatrix{
\uV(J,TK) \otimes \uV(K,TL) \ar[r]^(.46){1 \otimes T_{K,TL}} & \uV(J,TK) \otimes \uV(TK,TTL) \ar[r]^(.6)c & \uV(J,TTL) \ar[r]^(0.54){\uV(J,\mu_L)} & \uV(J,TL)
}
\end{equation}
$(J,K,L \in \J)$, where $c$ is the composition morphism.  Now $\T := \mathsf{t}(\TT)$ is the $\J$-theory corresponding to $(\Omega(T),\mathsf{e},\mathsf{m})$ under the isomorphism $\Mon(\CRProfJ) \cong \Th_\J$ of \bref{thm:th_as_mnd_crprofj}.  Hence, by the definition of the latter isomorphism, $\T = \A^\op$ where $\A$ is a $\V$-category with $\ob\A = \ob\J$ and
$$\A(J,K) = \Omega(T)(J,K) = \uV(J,TK) = \J_\TT(J,K)\;\;\;\;\;\;(J,K \in \ob\J).$$
The composition morphisms carried by $\A$ are exactly the morphisms \eqref{eq:kl_compn_morphs} inducing $\mathsf{m}$, and the identity arrow $1_J^\A \in \A_0(J,J) = \V(J,TJ)$ on $J$ in $\A$ corresponds (under Yoneda) to $\mathsf{e}_{-J} = \uV(j-,\eta_J):\J(-,J) = \uV(j-,J) \Rightarrow \uV(j-,TJ)$, i.e. $1_J^\A = \eta_J$.  Hence $\A = \J_\TT$, so  $\T = \J_\TT^\op$.
\end{proof}

\begin{DefSub}
Given a $\J$-ary $\V$-monad $\TT$ on $\uV$, we call the $\J$-theory $\mathsf{t}(\TT) = \J_\TT^\op$ the \textbf{Kleisli} $\J$-\textbf{theory} for $\TT$.
\end{DefSub}

Let us now assume that $\V$ has equalizers.

\begin{LemSub}\label{thm:ind_monad_iso_assoc_monad}
Let $(\T,\tau)$ be a $\J$-theory, let $F \dashv G:\Alg{\T} \rightarrow \uV$ denote the associated monadic $\V$-adjunction \pbref{thm:existence_and_vmonadicity_of_talg}, and let $\TT$ denote the $\V$-monad induced by this $\V$-adjunction.  Then $\TT$ is a $\J$-ary $\V$-monad, and its Kleisli $\J$-theory $\J_\TT^\op$ is isomorphic to $\T$, i.e. $\mathsf{t}(\TT) \cong\ \T$.  Consequently $\TT \cong \mathsf{m}(\T)$.
\end{LemSub}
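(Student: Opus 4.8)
The plan is to prove the three assertions in turn, the last being a formal consequence of the second.

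To see that $\TT$ is $\J$-ary, we use the explicit left adjoint of \bref{thm:talg_vmonadic_iff_lanjphi_exists}. Since $j$ is eleutheric, $\T$ is extensible, so by \bref{thm:existence_and_vmonadicity_of_talg} the $\V$-category $\Alg{\T}$ exists and $G$ is $\V$-monadic; hence by \bref{thm:talg_vmonadic_iff_lanjphi_exists} the pointwise left Kan extension $\Lan_j\phi$ exists and may be taken as the left adjoint, $F = \Lan_j\phi$, where $\phi\colon\J\to\Alg{\T}$ is the $\V$-functor of \bref{par:free_talg_vfunc_on_j}. As $j$ is fully faithful, $T\circ j = G\circ F\circ j \cong G\circ\phi = \T(\tau-,I)\colon\J\to\uV$. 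Now $FV = \y_j(V)\star\phi$ for each $V\in\ob\uV$, the weight $\y_j(V)$ is $\J$-flat (\bref{def:eleutheric}), and the colimit $\y_j(V)\star(G\circ\phi)$ exists in $\uV$ because $\T(\tau-,I)$ is extensible; therefore by \bref{thm:g_cr_jflat_and_gabs_colims} the $\V$-functor $G$ conditionally preserves these $\J$-flat colimits, and so $T = GF \cong \Lan_j(T\circ j)$, naturally. By \bref{thm:jary_charns} (applicable since $j$ is eleutheric and $\uV$ is $\PhiJ$-cocomplete), a $\V$-functor on $\uV$ that is a left Kan extension along $j$ is $\PhiJ$-cocontinuous, i.e. $\J$-ary; so $\TT$ is a $\J$-ary $\V$-monad, and $\mathsf{t}(\TT)$ and $\mathsf{m}(\T)$ are defined.

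Next we show $\mathsf{t}(\TT)\cong\T$. By \bref{thm:th_assoc_to_jary_mnd}, $\mathsf{t}(\TT) = \J_\TT^\op$, the opposite of the full sub-$\V$-category $\J_\TT$ of the Kleisli $\V$-category $\uV_\TT$ on the objects of $\J$; inspecting the proof of \bref{thm:th_assoc_to_jary_mnd}, its $\J$-theory structure functor is $k^\op$, where $k\colon\J\to\J_\TT$ is the canonical identity-on-objects $\V$-functor (restriction of the comparison $\uV\to\uV_\TT$, which is governed by the unit $\eta$). First, the adjunction isomorphisms $\Alg{\T}(FX,FY)\cong\uV(X,GFY)$ make $X\mapsto FX$ a fully faithful $\V$-functor $\uV_\TT\to\Alg{\T}$ compatible with $F$ and with $\uV\to\uV_\TT$, hence restrict to an isomorphism of $\J_\TT$ onto the full sub-$\V$-category of $\Alg{\T}$ on the objects $FJ$ ($J\in\ob\J$), carrying $k$ to the corestriction of $F\circ j$. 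Second, $F\circ j\cong\phi = \y\circ\tau^\op$ where $\y\colon\T^\op\to\Alg{\T}$ is the $\V$-functor of \bref{par:free_talg_vfunc_on_j}, which is fully faithful by the Yoneda computation in the proof of \bref{thm:free_alg_on_j}; since $\tau^\op$ is identity-on-objects, $\y$ restricts to an isomorphism of $\T^\op$ onto the full sub-$\V$-category of $\Alg{\T}$ on the objects $\phi J$. Composing these isomorphisms gives $\Psi\colon\J_\TT\xrightarrow{\sim}\T^\op$ with $\Psi\circ k = \tau^\op$ (the object-wise identifications carry $k$ through $F\circ j\cong\y\circ\tau^\op$ to $\tau^\op$); passing to opposites, $\Psi^\op\colon\mathsf{t}(\TT) = \J_\TT^\op\xrightarrow{\sim}\T$ satisfies $\Psi^\op\circ k^\op = \tau$, so it is an isomorphism of $\J$-theories, whence $\mathsf{t}(\TT)\cong\T$. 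Finally, $\mathsf{m}$ and $\mathsf{t}$ are mutually inverse equivalences $\ThJ\simeq\MndJ(\uV)$ (\bref{thm:jth_jary_mnd}), so applying $\mathsf{m}$ to $\mathsf{t}(\TT)\cong\T$ and using $\mathsf{m}\circ\mathsf{t}\cong\id$ gives $\TT\cong\mathsf{m}(\T)$ in $\MndJ(\uV)$.

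I expect the main obstacle to be the middle paragraph: not the individual hom-object isomorphisms (each is essentially a Yoneda lemma or an adjunction triangle) but the bookkeeping that makes the resulting isomorphism $\J_\TT\cong\T^\op$ compatible with the identity-on-objects theory-structure functors — equivalently, that the unit $\eta$ of $F\dashv G$, which controls $k$ via \bref{thm:th_assoc_to_jary_mnd}, is matched under Yoneda with the structure map $\tau$ of $\T$. This amounts to a routine but careful diagram chase through \bref{thm:free_alg_on_j}, \bref{thm:th_as_mnd_crprofj}, and the proof of \bref{thm:th_assoc_to_jary_mnd}, once the identifications above are set up.
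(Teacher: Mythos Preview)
Your approach is essentially the same as the paper's. The paper also establishes the isomorphism by identifying both $\T^\op$ and $\J_\TT$ with full sub-$\V$-categories of free algebras: it organizes this via a single diagram involving the Eilenberg-Moore comparison $C:\Alg{\T}\xrightarrow{\sim}\uV^\TT$, the fully faithful Kleisli comparison $E:\uV_\TT\rightarrowtail\uV^\TT$, and the inclusion $H:\J_\TT\hookrightarrow\uV_\TT$, observes that $F\circ j \cong \phi = \y\circ\tau^\op$ (exactly as you do), and then uses full faithfulness of $EH$ to transport the resulting object-wise isomorphisms $C\y(J)\cong EH(J)$ into a unique identity-on-objects $\V$-functor $Q:\T^\op\to\J_\TT$, which is fully faithful because $C\y$ and $EH$ are. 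This is your second paragraph, routed through $\uV^\TT$ rather than directly in $\Alg{\T}$.

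Two minor differences are worth noting. First, your opening paragraph explicitly proves that $\TT$ is $\J$-ary by showing $T\cong\Lan_j(T\circ j)$ and invoking \bref{thm:jary_charns}; the paper's proof simply begins ``it suffices to show $\mathsf{t}(\TT)\cong\T$'' and does not separately verify that $\TT$ lies in the domain of $\mathsf{t}$, so your argument is more complete on this point. Second, the paper's device of producing $Q$ via the \emph{uniqueness} clause of full faithfulness of $EH$ packages the construction of the identity-on-objects isomorphism somewhat more cleanly than your ``compose the isomorphisms'' step, though neither proof writes out the verification that the resulting isomorphism intertwines $\tau^\op$ with the structure functor $F'_\TT$ of $\J_\TT$---the point you rightly flag in your final paragraph as the remaining bookkeeping.
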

\begin{proof}
It suffices to show that $\mathsf{t}(\TT) \cong\ \T$, for then $\mathsf{m}(\T) \cong \mathsf{m}(\mathsf{t}(\TT)) \cong \TT$.  We have a diagram
$$
\xymatrix{
\Alg{\T} \ar[rr]^C_\sim & & \uV^\TT\\
& \uV \ar[ul]^F \ar[ur]^{F^\TT} \ar[r]_{F_\TT} & \uV_{\TT} \ar@{ >->}[u]_E\\
\T^\op \ar@{ >->}[uu]^{\y} & \J \ar@{^{(}->}[u]^j \ar[l]^{\tau^\op} \ar[r]_{F'_\TT} & \J_\TT \ar@{^{(}->}[u]_{H}
}
$$
where $C$ is the comparison $\V$-functor \cite[II.1]{Dub} (an equivalence), $F^\TT$ is the Eilenberg-Moore left adjoint, $\y$ is the Yoneda $\V$-functor, $H$ is the inclusion, $F_\TT$ is the Kleisli left adjoint, $F'_\TT$ is the restriction of $F_\TT$, and $E$ is the comparison $\V$-functor for the Kleisli $\V$-adjunction (and hence is fully faithful).  The two triangular cells commute by \cite[II.1.6]{Dub}, and the small square clearly commutes.  The two composites in the remaining cell are isomorphic, since by \bref{thm:talg_vmonadic_iff_lanjphi_exists}, $F \circ j = (\Lan_j \phi) \circ j \cong \phi = \y \circ \tau^\op$.  Therefore for each object $J \in \ob\T^\op = \ob\J$ we have an isomorphism
$$C\y\tau^\op(J) \cong EHF'_\TT(J)\;,$$
and since $\tau^\op$ and $F'_\TT$ are identity-on-objects, this is an isomorphism
\begin{equation}\label{eq:isos_on_objs}C\y(J) \cong EH(J)\;.\end{equation}
Hence since $EH$ is fully faithful, there is a unique identity-on-objects $\V$-functor $Q:\T^\op \rightarrow \J_\TT$ such that the isomorphisms \eqref{eq:isos_on_objs} constitute a $\V$-natural isomorphism $C \circ \y \cong EHQ$.  Since $C \circ \y$ and $EH$ are both fully faithful, it follows that $Q$ is fully faithful and hence is an isomorphism. 
\end{proof}

\begin{ThmSub}\label{thm:algs_of_assoc_monad_and_theory}
\emptybox
\begin{enumerate}
\item Given a $\J$-theory $\T$,
$$\uV^{\mathsf{m}(\T)} \;\;\cong\;\; \Alg{\T}^! \;\;\simeq \;\;\Alg{\T}\;,$$
i.e., the $\V$-category of Eilenberg-Moore algebras for the associated $\V$-monad $\mathsf{m}(\T)$ is isomorphic to the $\V$-category of normal $\T$-algebras and equivalent to the $\V$-category of $\T$-algebras.
\item Given a $\J$-ary $\V$-monad $\TT$ on $\uV$,
$$\uV^\TT \;\;\cong\;\; \Alg{\mathsf{t}(\TT)}^! \;\;\simeq\;\; \Alg{\mathsf{t}(\TT)}\;,$$
i.e., the $\V$-category of Eilenberg-Moore $\TT$-algebras is isomorphic to the $\V$-category of normal $\mathsf{t}(\TT)$-algebras and equivalent to the $\V$-category of $\mathsf{t}(\TT)$-algebras.
\end{enumerate}
Moreover, the above are equivalences in the pseudo-slice $\VCAT \slash \uV$ \pbref{par:pseudo_slice} when the above $\V$-categories are equipped with the evident $\V$-functors to $\uV$.
\end{ThmSub}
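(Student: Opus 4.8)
The plan is to derive both parts from Theorem~\bref{thm:existence_and_vmonadicity_of_talg} and Lemma~\bref{thm:ind_monad_iso_assoc_monad}, by assembling three comparisons over $\uV$ and invoking the fact recorded in \bref{par:pseudo_slice} that a $1$-cell of a pseudo-slice $2$-category is an equivalence (resp.\ isomorphism) as soon as its underlying $1$-cell is one.

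For part~(1): since $j$ is eleutheric, $\T$ is extensible, so Theorem~\bref{thm:existence_and_vmonadicity_of_talg} gives that $\Alg{\T}$ exists, that $G:\Alg{\T}\to\uV$ is $\V$-monadic with induced $\V$-monad $\TT$ satisfying $\TT\cong\mathsf{m}(\T)$ by Lemma~\bref{thm:ind_monad_iso_assoc_monad}, and that $G':\Alg{\T}^!\to\uV$ is \emph{strictly} $\V$-monadic. First I would observe that the equivalence $\Alg{\T}\simeq\Alg{\T}^!$ of \bref{thm:equiv_alg_nalg} lives over $\uV$: its inclusion $\V$-functor $\Alg{\T}^!\hookrightarrow\Alg{\T}$ commutes strictly with the carrier $\V$-functors (indeed $G$ restricts to $G'$), while the normalization $A\mapsto A^!$ satisfies $A^!I=[I,\ca{A}]=\ca{A}$, since the designated $\J$-cotensors of $\uV$ are standard \bref{rem:des_cot_v}, and the structure isomorphism $\nu:A\Rightarrow A^!$ of \bref{thm:equiv_alg_nalg} has identity component at $I$; hence by \bref{par:pseudo_slice} this is an equivalence in $\VCAT \slash \uV$. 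Since $G$ and $G'$ are then monadic $\V$-functors equivalent over $\uV$, the $\V$-monad $\TT'$ induced by $F'\dashv G'$ is isomorphic to $\TT$. Strict $\V$-monadicity of $G'$ now says that its comparison $\V$-functor $\Alg{\T}^!\to\uV^{\TT'}$ is an isomorphism, and by \cite[II.1.6]{Dub} it commutes strictly with the forgetful $\V$-functors to $\uV$, hence is an isomorphism in $\VCAT \slash \uV$; moreover the $\V$-monad isomorphism $\TT'\cong\TT\cong\mathsf{m}(\T)$ induces, by pulling back algebra structures, an isomorphism $\uV^{\TT'}\cong\uV^{\mathsf{m}(\T)}$ that likewise commutes strictly with the forgetful $\V$-functors. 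Composing these three $\V$-functors over $\uV$ yields $\uV^{\mathsf{m}(\T)}\cong\Alg{\T}^!\simeq\Alg{\T}$ in $\VCAT \slash \uV$, which is part~(1).

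Part~(2) is then formal: given a $\J$-ary $\V$-monad $\TT$ on $\uV$, put $\T:=\mathsf{t}(\TT)$, which is a $\J$-theory, and apply part~(1) to obtain $\uV^{\mathsf{m}(\T)}\cong\Alg{\T}^!\simeq\Alg{\T}$ in $\VCAT \slash \uV$. Since $\mathsf{m}$ and $\mathsf{t}$ are mutually quasi-inverse equivalences \bref{thm:jth_jary_mnd}, there is a $\V$-monad isomorphism $\mathsf{m}(\T)=\mathsf{m}(\mathsf{t}(\TT))\cong\TT$, which again induces an isomorphism $\uV^{\mathsf{m}(\T)}\cong\uV^\TT$ over $\uV$; substituting gives $\uV^\TT\cong\Alg{\mathsf{t}(\TT)}^!\simeq\Alg{\mathsf{t}(\TT)}$ in $\VCAT \slash \uV$, which is part~(2).

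The main obstacle is nothing deep but rather the bookkeeping behind the ``Moreover'' clause, namely checking that each of the three comparison $\V$-functors above is compatible with the projections to $\uV$ so that \bref{par:pseudo_slice} may be applied: for the strictly-monadic comparison this is \cite[II.1.6]{Dub}; for the isomorphism of Eilenberg--Moore $\V$-categories induced by a $\V$-monad isomorphism it is immediate from the construction; and for the normalization equivalence it reduces to the standardness of the designated $\J$-cotensors of $\uV$ \bref{rem:des_cot_v} together with the explicit description of $\nu$ in the proof of \bref{thm:equiv_alg_nalg}. Everything else is a direct appeal to results already in hand.
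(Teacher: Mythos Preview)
Your proposal is correct and follows essentially the same approach as the paper: both proofs invoke \bref{thm:existence_and_vmonadicity_of_talg} to get strict $\V$-monadicity of $G'$ and $\V$-monadicity of $G$, use the equivalence $\Alg{\T}^!\simeq\Alg{\T}$ to identify the induced monads $\TT'\cong\TT$, apply \bref{thm:ind_monad_iso_assoc_monad} for $\TT\cong\mathsf{m}(\T)$, and then chain $\uV^{\mathsf{m}(\T)}\cong\uV^{\TT'}\cong\Alg{\T}^!\simeq\Alg{\T}$; part~(2) is derived from part~(1) via $\TT\cong\mathsf{m}(\mathsf{t}(\TT))$ in both. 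The only difference is that you spell out in detail why each comparison lives over $\uV$ (the ``Moreover'' clause), whereas the paper leaves this implicit.
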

\begin{proof}
1.  By \bref{thm:existence_and_vmonadicity_of_talg}, we have $\V$-monadic $\V$-adjunctions $F \dashv G:\Alg{\T} \rightarrow \uV$ and $F' \dashv G':\Alg{\T}^! \rightarrow \uV$, and the latter is strictly $\V$-monadic.  Since $G'$ is the restriction of $G$ along the equivalence $\Alg{\T}^! \hookrightarrow \Alg{\T}$, it follows that the respective $\V$-monads $\TT$ and $\TT'$ induced by these $\V$-adjunctions are isomorphic, and by \bref{thm:ind_monad_iso_assoc_monad} we have moreover that $\TT' \cong \TT \cong \mathsf{m}(\T)$.  Therefore,
$$\uV^{\mathsf{m}(\T)} \cong \uV^{\TT'} \cong \Alg{\T}^! \simeq \Alg{\T}\;.$$

2. For an arbitrary $\J$-ary $\V$-monad $\TT$ on $\uV$, we know that $\TT \cong \mathsf{m}(\mathsf{t}(\TT))$, so by 1 we deduce that
$$\uV^{\TT} \cong \uV^{\mathsf{m}(\mathsf{t}(\TT))} \cong \Alg{\mathsf{t}(\TT)}^! \simeq \Alg{\mathsf{t}(\TT)}\;.$$
\end{proof}

\section{Characterization theorem for \texorpdfstring{$\J$}{J}-algebraic categories over \texorpdfstring{$\V$}{V}}

Let $j:\J \hookrightarrow \uV$ be an eleutheric system of arities, and assume that $\V$ has equalizers.  Let us consider both $\Alg{\T}$ (for a $\J$-theory $\T$) and $\uV^{\TT}$ (for a $\V$-monad $\TT$ on $\uV$) as objects of the pseudo-slice 2-category $\VCAT \slash\: \uV$ \pbref{par:pseudo_slice} via the `forgetful' $\V$-functors.

\begin{DefSub}
Let $G:\A \rightarrow \uV$ be a $\V$-functor, exhibiting $\A$ as an object of $\VCAT \slash \uV$.  $G$ is $\J$-\textbf{algebraic} if there is an equivalence $\A \simeq \Alg{\T}$ in $\VCAT \slash \uV$ for some $\J$-theory $\T$.  $G$ is $\J$-\textbf{monadic} if there is an equivalence $\A \simeq \uV^{\TT}$ in $\VCAT \slash \uV$ for some $\J$-ary $\V$-monad $\TT$ on $\uV$.
\end{DefSub}

\begin{ThmSub}\label{thm:charn_jalg_cats_over_v}
For a $\V$-functor $G:\A \rightarrow \uV$, the following are equivalent:
\begin{enumerate}
\item $G$ is $\J$-algebraic.
\item $G$ is $\J$-monadic.
\item $G$ has a left adjoint, and $G$ detects, reflects, and conditionally preserves $G$-relatively $\J$-stable colimits. 
\item $G$ has a left adjoint, and $G$ detects, reflects, and conditionally preserves $\J$-flat colimits and $G$-absolute colimits.
\item $G$ is $\V$-monadic and the induced $\V$-monad conditionally preserves $\J$-flat colimits.
\end{enumerate}
\end{ThmSub}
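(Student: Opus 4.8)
The plan is to prove $(1)\Leftrightarrow(2)$ together with the cycle $(1)\Rightarrow(3)\Rightarrow(4)\Rightarrow(5)\Rightarrow(2)$, so that all five conditions become equivalent. The equivalence $(1)\Leftrightarrow(2)$ is essentially immediate from \bref{thm:algs_of_assoc_monad_and_theory}: for each $\J$-theory $\T$ that theorem provides an equivalence $\Alg{\T}\simeq\uV^{\mathsf{m}(\T)}$ in $\VCAT \slash \uV$ with $\mathsf{m}(\T)$ a $\J$-ary $\V$-monad, and for each $\J$-ary $\V$-monad $\TT$ an equivalence $\uV^\TT\simeq\Alg{\mathsf{t}(\TT)}$ in $\VCAT \slash \uV$; since $\mathsf{m}$ and $\mathsf{t}$ are mutually quasi-inverse \pbref{thm:jth_jary_mnd} and equivalences in $\VCAT \slash \uV$ compose, a $\V$-functor equivalent over $\uV$ to a forgetful functor of one sort is equivalent over $\uV$ to one of the other sort.

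For $(1)\Rightarrow(3)$ I would first record that all three clauses of $(3)$ — possession of a left adjoint, and detection, reflection, and conditional preservation of $G$-relatively $\J$-stable colimits — are invariant under replacing $(\A,G)$ by an equivalent object of $\VCAT \slash \uV$; this is routine, since the notion of $G$-relatively $\J$-stable weighted diagram depends only on $G$ up to isomorphism, and the colimit-theoretic properties in question are preserved and reflected by precomposition with an equivalence of $\V$-categories. Granting this, $(1)\Rightarrow(3)$ reduces to the statement that for an arbitrary $\J$-theory $\T$ the forgetful $\V$-functor $G\colon\Alg{\T}\to\uV$ has these properties, which is precisely the conjunction of \bref{thm:existence_and_vmonadicity_of_talg}(2) (giving the left adjoint) with \bref{thm:carrier_vfunc_cr_g_rel_jstb_colims}(1). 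The implication $(3)\Rightarrow(4)$ is then immediate, because over $\uV$ every $\J$-flat-weighted diagram and every $G$-absolute weighted diagram is $G$-relatively $\J$-stable \pbref{par:jflat_and_gabs_colims}, so detecting, reflecting, and conditionally preserving all $G$-relatively $\J$-stable colimits entails the same for these two subclasses.

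For $(4)\Rightarrow(5)$ I would first obtain $\V$-monadicity from the enriched Beck theorem \pbref{par:vmonads_and_vmonadicity}. Given a $G$-contractible pair $(f,g)$ in $\A$, the pair $(Gf,Gg)$ admits a split, hence absolute, coequalizer in $\uV$, so the conical weighted diagram picking out $(f,g)$ is $G$-absolute; thus $(4)$ ensures that $G$ detects, reflects, and — the relevant colimit $W\star GD$ existing — preserves conical coequalizers of $G$-contractible pairs, and with the assumed left adjoint $G$ is $\V$-monadic. Writing $F\dashv G$ and $T=GF$, the induced monad conditionally preserves $\J$-flat colimits: if $W$ is $\J$-flat, $W\star D$ exists in $\uV$, and $W\star TD$ exists, then $F(W\star D)=W\star FD$ exists since $F$ preserves all colimits, and conditional preservation of $\J$-flat colimits by $G$ gives $G(W\star FD)\cong W\star GFD=W\star TD$; this is $(5)$. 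Finally, for $(5)\Rightarrow(2)$, $\V$-monadicity of $G$ gives an equivalence $\A\simeq\uV^\TT$ in $\VCAT \slash \uV$ through the comparison $\V$-functor \pbref{par:pseudo_slice}, where $\TT$ is the induced $\V$-monad; and since $j$ is eleutheric, $\uV$ is $\PhiJ$-cocomplete and every weight in $\PhiJ$ is $\J$-flat \pbref{def:eleutheric}, so conditional preservation of $\J$-flat colimits by $T$ forces $T$ to preserve every $\PhiJ$-colimit — both $W\star D$ and $W\star TD$ automatically exist for $W\in\PhiJ$ — i.e. $T$ is $\PhiJ$-cocontinuous, hence $\J$-ary \pbref{rem:jary_phi_cocts}. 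Therefore $\TT$ is a $\J$-ary $\V$-monad and $G$ is $\J$-monadic.

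The one place I expect to need genuine (if still routine) care is the invariance claim in the second paragraph — transferring detection, reflection, and conditional preservation of a class of colimits defined relatively to the functor to $\uV$ along an equivalence in the pseudo-slice $\VCAT \slash \uV$ — together with the identification of a $G$-contractible pair with a $G$-absolute conical weighted diagram used in $(4)\Rightarrow(5)$; everything else is a direct assembly of \bref{thm:existence_and_vmonadicity_of_talg}, \bref{thm:carrier_vfunc_cr_g_rel_jstb_colims}, \bref{thm:algs_of_assoc_monad_and_theory}, \bref{par:jflat_and_gabs_colims}, and the enriched Beck theorem.
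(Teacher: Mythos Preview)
Your proof is correct and follows essentially the same route as the paper's own proof: the same equivalence $(1)\Leftrightarrow(2)$ via \bref{thm:algs_of_assoc_monad_and_theory}, and the same cycle $(1)\Rightarrow(3)\Rightarrow(4)\Rightarrow(5)\Rightarrow(2)$ invoking \bref{thm:existence_and_vmonadicity_of_talg}, \bref{thm:carrier_vfunc_cr_g_rel_jstb_colims}, \bref{par:jflat_and_gabs_colims}, Beck, and the $\J$-flatness of the weights in $\PhiJ$. The only difference is that you make explicit two points the paper leaves tacit --- invariance of condition $(3)$ under equivalence in $\VCAT\slash\uV$, and the identification of $G$-contractible coequalizer diagrams as $G$-absolute --- both of which are indeed routine.
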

\begin{proof}
The equivalence of 1 and 2 follows from \bref{thm:algs_of_assoc_monad_and_theory}, and the implication $1 \Rightarrow 3$ follows from \bref{thm:existence_and_vmonadicity_of_talg} and \bref{thm:carrier_vfunc_cr_g_rel_jstb_colims}.  Also, 3 implies 4, by \bref{par:jflat_and_gabs_colims}.  If 4 holds, then the Beck monadicity theorem \pbref{par:vmonads_and_vmonadicity} entails that $G$ is $\V$-monadic, and since the left adjoint $F$ preserves colimits and $G$ conditionally preserves $\J$-flat colimits it follows that the induced $\V$-monad $T = GF$ conditionally preserves $\J$-flat colimits.  Lastly, if 5 holds, then since the weights in $\PhiJ$ are $\J$-flat and $\uV$ is $\PhiJ$-cocomplete it follows that the induced $\V$-monad preserves $\PhiJ$-colimits, and hence 2 holds.
\end{proof}

\begin{CorSub}\label{thm:charn_jary_mnd_via_jflat_colims}
Let $\TT = (T,\eta,\mu)$ be a $\V$-monad on $\uV$.  Then $\TT$ is a $\J$-ary $\V$-monad if and only if $T$ conditionally preserves $\J$-flat colimits.
\end{CorSub}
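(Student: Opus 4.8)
The plan is to deduce this corollary from the characterization theorem \bref{thm:charn_jalg_cats_over_v}, applied to the Eilenberg--Moore $\V$-functor $G \colon \uV^\TT \to \uV$, which is available since $\V$ has equalizers throughout this section and whose left adjoint induces the $\V$-monad $\TT$ itself.

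For the ``only if'' direction, suppose $\TT$ is $\J$-ary. Then $G \colon \uV^\TT \to \uV$ is $\J$-monadic essentially by definition: take the $\J$-ary $\V$-monad to be $\TT$ and the equivalence in $\VCAT \slash \uV$ to be the identity on $\uV^\TT$. Hence condition~2 of \bref{thm:charn_jalg_cats_over_v} holds for $G$, so by the chain $2 \Rightarrow 1 \Rightarrow 3 \Rightarrow 4 \Rightarrow 5$ in that theorem, condition~5 holds; this says precisely that $G$ is $\V$-monadic and that the $\V$-monad it induces conditionally preserves $\J$-flat colimits. Since that induced $\V$-monad is $\TT$, its underlying $\V$-functor $T$ conditionally preserves $\J$-flat colimits. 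An equivalent self-contained route, avoiding \bref{thm:charn_jalg_cats_over_v}: by \bref{thm:ind_monad_iso_assoc_monad} we have $\TT \cong \mathsf{m}(\mathsf{t}(\TT))$, and $\mathsf{m}(\mathsf{t}(\TT))$ is the $\V$-monad induced by the monadic $\V$-adjunction $\Alg{\mathsf{t}(\TT)} \to \uV$ of \bref{thm:existence_and_vmonadicity_of_talg}, which conditionally preserves $\J$-flat colimits by part~(4) of that theorem.

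For the ``if'' direction, suppose conversely that $T$ conditionally preserves $\J$-flat colimits. Here I would argue directly. Since $j$ is eleutheric, each weight $\y_j(V)$ in $\PhiJ$ is $\J$-flat \bref{def:eleutheric} and $\uV$ is $\PhiJ$-cocomplete, so for every $\V$-functor $D \colon \J \to \uV$ both colimits $\y_j(V) \star D$ and $\y_j(V) \star (TD)$ exist; by conditional preservation \bref{par:det_cond_pres_refl_colims}, $T$ therefore preserves every such $\PhiJ$-colimit. Thus $T$ is $\PhiJ$-cocontinuous, i.e. $\J$-ary by \bref{rem:jary_phi_cocts}, and so $\TT$ is a $\J$-ary $\V$-monad. (One could equally run this through \bref{thm:charn_jalg_cats_over_v}: $G$ now satisfies condition~5, hence condition~2, giving an equivalence $\uV^\TT \simeq \uV^{\TT'}$ over $\uV$ with $\TT'$ a $\J$-ary $\V$-monad; an equivalence in $\VCAT \slash \uV$ between $\V$-monadic $\V$-functors identifies their induced $\V$-monads, so $\TT \cong \TT'$ is $\J$-ary.)

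I expect the only real obstacle to be the ``only if'' direction: one must know that a $\J$-ary $\V$-monad conditionally preserves \emph{all} $\J$-flat colimits and not merely the $\PhiJ$-colimits witnessing density of $j$. This is exactly the content already packaged in \bref{thm:existence_and_vmonadicity_of_talg}(4), which in turn rests on \bref{thm:carrier_vfunc_cr_g_rel_jstb_colims}---conditional preservation of $G$-relatively $\J$-stable colimits by the forgetful $\V$-functor out of $\Alg{\T}$---together with the identification of the induced $\V$-monad with $\mathsf{m}(\T)$. No circularity arises, since the proof of \bref{thm:charn_jalg_cats_over_v} does not appeal to the present corollary.
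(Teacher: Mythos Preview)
Your proposal is correct and takes essentially the same approach as the paper: the paper's proof is literally the single line ``Invoke \bref{thm:charn_jalg_cats_over_v} with respect to $G^\TT:\uV^\TT \rightarrow \uV$'', and that is precisely your main route, spelled out in more detail. Your direct argument for the ``if'' direction (conditional preservation of $\J$-flat colimits plus $\PhiJ$-cocompleteness of $\uV$ gives $\PhiJ$-cocontinuity) is exactly the content of the step $5 \Rightarrow 2$ inside the proof of \bref{thm:charn_jalg_cats_over_v}, so it is not a genuinely different approach but rather an unpacking of the same one.
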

\begin{proof}
Invoke \bref{thm:charn_jalg_cats_over_v} with respect to $G^\TT:\uV^\TT \rightarrow \uV$.
\end{proof}

\section{Appendix: Finite copowers of \texorpdfstring{$I$}{I} versus \texorpdfstring{$\Phi$}{Phi}-presentable objects}\label{sec:appendix}

We show herein that the finite copowers of $I$ in $\V$ are not in general the same as the $\Phi$-presentable objects of $\uV$ for the class $\Phi$ of weights for (conical) finite products (cf. \S \bref{sec:intro}), even when $\V$ is a $\pi$-category in the sense of \cite{BoDay}.

Let us first recall some terminology and facts from \cite{LaRo}, wherein $\V$ is assumed complete and cocomplete.  In (only) the present section, we follow \cite{LaRo} in taking the term \textit{weight} to mean any $\V$-functor $\B^\op \rightarrow \uV$ that is \textit{small} in the sense of \cite{DayLack}.  Given a class of weights $\Phi$, an object $V$ of $\uV$ is said to be \textit{$\Phi$-presentable} in the terminology of \cite{LaRo} if $\uV(V,-):\uV \rightarrow \uV$ preserves $\Phi^+$-colimits, where $\Phi^+$ is the class of $\Phi$-flat weights, i.e. all those weights $W$ for which $W$-colimits commute with $\Phi$-limits in $\uV$.  

Let $\V_\Phi \hookrightarrow \uV$ denote the full sub-$\V$-category consisting of the $\Phi$-presentable objects, and let $\Phi^{+-}$ denote\footnote{We follow \cite{KeSch} in using this notation, except that that article restricts attention to weights on small $\V$-categories.} the class of all weights $U$ such that $U$-limits commute with $\Phi^+$-colimits in $\uV$.

\begin{PropSub}
$\V_\Phi$ is closed in $\uV$ under the taking of (i) $\Phi^{+-}$-colimits, (ii) $\Phi$-colimits, and (iii) retracts.
\end{PropSub}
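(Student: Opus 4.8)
The plan is to work throughout with the definition of $\V_\Phi$ as the full sub-$\V$-category of those objects $V$ for which $\uV(V,-)\colon\uV\to\uV$ preserves $\Phi^+$-colimits, and to treat (i) by a commutation-of-(co)limits argument, (ii) by reducing it to (i), and (iii) by a retract argument.

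For (i) and (ii), the basic tool is that for any weighted colimit $W \star D$ in $\uV$ and any object $Y$ the universal property of the colimit yields a natural isomorphism $\uV(W \star D, Y) \cong \{W, \uV(D-, Y)\}$, with no hypothesis on $W$. Now let $(W, D)$ be a $\Phi^{+-}$-weighted diagram in $\uV$ with every value $DB$ lying in $\V_\Phi$, and let $X \star E$ be an arbitrary $\Phi^+$-weighted colimit in $\uV$. Applying this isomorphism, then the fact that each $\uV(DB,-)$ preserves the $\Phi^+$-colimit $X \star E$ (as $DB \in \V_\Phi$), then the fact that $W$-limits commute with $\Phi^+$-colimits (as $W \in \Phi^{+-}$ and $X \in \Phi^+$), and then the isomorphism again, one gets
$$\uV(W \star D, X \star E) \;\cong\; \{W, \uV(D-, X \star E)\} \;\cong\; \{W, X \star \uV(D-, E-)\} \;\cong\; X \star \{W, \uV(D-, E-)\} \;\cong\; X \star \uV(W \star D, E-)\;,$$
and one checks routinely that this composite is the canonical comparison morphism; hence $\uV(W \star D, -)$ preserves $\Phi^+$-colimits, i.e. $W \star D \in \V_\Phi$. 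This is (i). For (ii) it suffices to note $\Phi \subseteq \Phi^{+-}$: by the very definition of $\Phi^+$, every $W \in \Phi^+$ has $W$-colimits commuting with $\Phi$-limits, equivalently $\Phi$-limits commuting with $W$-colimits, and since this holds for all $W \in \Phi^+$ it says exactly that every weight in $\Phi$ has its limits commuting with all $\Phi^+$-colimits. Thus (ii) is the special case of (i) with weight in $\Phi$.

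For (iii), let $V'$ be a retract of an object $V$ of $\V_\Phi$, with $r\colon V \to V'$, $s\colon V' \to V$ and $rs = \id_{V'}$. Then $\uV(-,Y)$ carries this retraction to one exhibiting $\uV(V', Y)$ as a retract of $\uV(V, Y)$, naturally in $Y$. Consequently, for any $\Phi^+$-weighted colimit $X \star E$ in $\uV$, the canonical comparison morphism $X \star \uV(V', E-) \to \uV(V', X \star E)$ is, by this naturality, a retract of the corresponding comparison morphism for $V$ in the arrow category of $\uV$; the latter is invertible since $V \in \V_\Phi$, and a retract of an isomorphism is an isomorphism. Hence $\uV(V', -)$ preserves $\Phi^+$-colimits, so $V' \in \V_\Phi$. (One could instead fold (iii) into (i) by regarding the splitting of an idempotent as an absolute colimit, but the direct argument avoids checking that the relevant weight lies in $\Phi^{+-}$.)

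I do not expect a genuine obstacle here. The one place calling for care is the displayed chain of isomorphisms in (i): there $\uV(D-, E-)$ is a functor of two independent variables, so each step is an instance of the standard Fubini-type interchange of iterated (co)limits, and after assembling them one must confirm that the composite really is the canonical comparison; this is routine once the classes $\Phi^+$ and $\Phi^{+-}$ have been unwound correctly.
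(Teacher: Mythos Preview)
Your proof is correct. Parts (i) and (ii) follow exactly the paper's approach: the paper also uses the isomorphism $\uV(W\star D,-)\cong\{W,\uV(D-,-)\}$, the hypothesis that each $\uV(DB,-)$ preserves $\Phi^+$-colimits, and the commutation of $W$-limits with $\Phi^+$-colimits, then reduces (ii) to (i) via $\Phi\subseteq\Phi^{+-}$.

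The difference is in (iii). You give a direct retract argument: the comparison morphism for $V'$ is a retract of that for $V$ in the arrow category, and a retract of an isomorphism is an isomorphism. The paper instead does precisely what you mention parenthetically at the end: it treats a retract as the splitting of an idempotent, observes that this is both a conical colimit and a conical limit of shape $\F$ (the one-object category with a single nontrivial idempotent), and shows the corresponding weight lies in $\Phi^{+-}$ because idempotent-splittings are absolute, hence preserved by every functor $W\star(-)$. Your direct argument is shorter and avoids invoking the absolute-colimit description; the paper's approach has the virtue of subsuming (iii) under (i) uniformly, so that closure under retracts becomes a special case of closure under $\Phi^{+-}$-colimits rather than a separate phenomenon.
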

\begin{proof}
(i).  Given a colimit $U \star D$ in $\uV$ where $U:\C^\op \rightarrow \uV$ lies in $\Phi^{+-}$ and $D:\C \rightarrow \uV$ is valued in $\V_\Phi$, we have that $\uV(U \star D,V) \cong \{U,\uV(D-,V)\}$, $\V$-naturally in $V \in \uV$, but $\uV(DC,-)$ preserves $\Phi^+$-colimits for each object $C$ of $\C$, so since $U$-limits commute with $\Phi^+$-colimits in $\uV$, it follows that $\uV(U \star D,-)$ preserves $\Phi^+$-colimits, i.e. $U \star D$ is $\Phi$-presentable.  (ii). $\Phi \subseteq \Phi^{+-}$, so this follows from (i).  (iii). Retracts in $\V$ can be described equivalently as \textit{idempotent-splittings} in $\uV$, which are conical colimits of diagrams of a particular shape $\F$ (namely a one-object category $\F$ with a single non-trivial idempotent \cite[5.8]{Ke:Ba}), and are equivalently described as conical \textit{limits} of the same shape $\F$ \cite[Vol. 1, \S 6.5]{Bor}.  Letting $U$ denote the weight for conical limits and colimits of shape $\F = \F^\op$, it suffices by (i) to show that $U \in \Phi^{+-}$.  Given an arbitrary weight $W:\B^\op \rightarrow \uV$, since $W$ is small and $\uV$ is cocomplete, it follows that $\uV$ has all $W$-colimits, so we obtain an ordinary functor $W \star (-):\VCAT(\B,\uV) \rightarrow \V$.  But idempotent-splittings are preserved by any functor and hence by $W \star (-)$, and it follows that $U$-limits commute with $W$-colimits in $\uV$.  In particular, $U \in \Phi^{+-}$ as needed.
\end{proof}

\begin{ParSub}
Now choose any commutative ring $R$ for which there exists a finitely generated non-free projective $R$-module $M$, and let $\V$ be the category of $R$-modules, which is a $\pi$-category in the sense of Borceux and Day \cite{BoDay}.  Here $I = R$.  Letting $\Phi$ be the class of weights for finite products, the closure $\Phi(R)$ of $\{R\}$ in $\uV$ under $\Phi$-colimits consists of exactly the finite copowers of $R$, i.e. the finitely generated free $R$-modules.  Since the full sub-$\V$-category $\V_\Phi \hookrightarrow \uV$ consisting of all $\Phi$-presentable objects contains $R$ and is closed under $\Phi$-colimits, $\Phi(R) \subseteq \V_\Phi$.  But $M$ is a retract of a finitely generated free $R$-module $N$, so since $N \in \V_\Phi$ and $\V_\Phi$ is closed under retracts, $M \in \V_\Phi$ as well, yet $M \notin \Phi(R)$ as $M$ is not free.
\end{ParSub}

\bibliographystyle{amsplain}
\bibliography{bib}

\end{document}